\documentclass[reqno,oneside]{amsart}
\usepackage{amsmath, amsthm}
\usepackage{amssymb}
\usepackage{stackengine}
\usepackage{xcolor}
\usepackage{appendix}
\usepackage[a4paper, total={6in, 9.5in}]{geometry}

\newtheorem{theorem}{Theorem}[section]
\newtheorem{lemma}[theorem]{Lemma}
\newtheorem{proposition}[theorem]{Proposition}
\theoremstyle{definition}
\newtheorem{definition}[theorem]{Definition}
\newtheorem{corollary}{Corollary}[theorem]

\theoremstyle{remark}
\newtheorem{remark}[theorem]{Remark}

\numberwithin{equation}{section}

\def\XXint#1#2#3{{\setbox0=\hbox{$#1{#2#3}{\int}$}
     \vcenter{\hbox{$#2#3$}}\kern-.5\wd0}}

\newcommand{\Om} {\Omega}
\newcommand{\sr}{\mathbb{R}}
\newcommand{\coa}{C^{1,\alpha}(\overline{\Om})}
\newcommand{\De} {\Delta}
\newcommand{\p}{\Delta_p}
\newcommand{\s}{(-\Delta)^s}
\newcommand{\qs}{(-\Delta_q)^s}
\newcommand{\ps}{(-\Delta_p)^s}
\newcommand{\ba}{\beta}
\newcommand{\bdry}{\mathbb{R}^N \setminus \Omega}
\newcommand{\pa}{\partial}
\newcommand{\rar}{\rightarrow}
\newcommand{\na} {\nabla}
\newcommand{\omc}{(\overline{\Om})}
\newcommand{\sn}{\mathbb{N}}
\newcommand{\si}{\sigma}

\newcommand{\ball}{B_{\epsilon} (z_0)}
\newcommand{\intcd}{int\ C_d^+ (U)}
\newcommand{\li}{\mathcal{L}_0^{-1}}

\newcommand{\xo}{\mathbb{X}_0}
\newcommand{\ul}{\underline{u}}

\newcommand{\ti}{\underline{I}_{\la}}
\newcommand{\wop}{W^{1,p}_0 (\Omega)}

\newcommand{\il}{\underline{I}}
\newcommand{\ue}{U_{\varepsilon}}
\newcommand{\la} {\lambda}
\newcommand{\test}{C_c^{\infty} (\Om)}

\begin{document}

\title[Global Multiplicity and Comparison Principles]{Global Multiplicity and Comparison Principles for Singular Problems driven by Mixed Local-Nonlocal Operators}

\author{R. Dhanya}
\email{dhanya.tr@iisertvm.ac.in}

\author{Sarbani Pramanik}
\email{sarbanipramanik20@iisertvm.ac.in}

\address{School of Mathematics, Indian Institute of Science Education and Research Thiruvananthapuram, Maruthamala, Thiruvananthapuram, Kerala, 695551, India.}

\subjclass{35R11, 35J92, 35J20, 35J75, 35B09, 35B33, 35J60, 35J70}

\keywords{Mixed local nonlocal operator, global multiplicity of positive solution, strong comparison principle, variational methods, local minimizers, singular nonlinearity, critical exponent}

\begin{abstract}

We study a singular elliptic problem driven by a mixed local-nonlocal operator of the form
\begin{equation*}
    \begin{aligned}
        -\p u + \qs u &= \frac{\la}{u^{\delta}} + u^r \text{ in } \Om\\
        u>0 \text{ in } \Om,\ u &=0 \text{ in } \bdry
    \end{aligned}
\end{equation*}
where $p > sq$, $0<\delta<1$ and $\lambda > 0$ is a parameter. The nonlinearity exhibits a singular power-type behavior near zero and displays at most a critical growth at infinity. We establish a global multiplicity result with respect to the parameter $\lambda$ by identifying a sharp threshold that separates existence, non-existence, and multiplicity regimes, a result that is new for singular problems involving mixed local-nonlocal operators. We also derive a Hopf-type strong comparison principle adapted to this nonlinear setting, which provides the main analytical tool for the global multiplicity result. Additionally, we investigate qualitative properties of solutions that are essential for the variational analysis, such as a uniform $L^{\infty}$-estimate and a Sobolev versus H\"older local minimizer result. The analytical tools developed herein are of independent mathematical interest, with their applicability extending over a broader class of mixed local-nonlocal problems.

\end{abstract}

\maketitle

\section{Introduction}

We consider the following singular mixed local-nonlocal problem:
\begin{equation}\tag{$P_{\la}$}\label{P_lambda}
    \begin{aligned}
        -\p u + \qs u &= \frac{\la}{u^{\delta}} + u^r \text{ in } \Om\\
        u>0 \text{ in } \Om,\ u &=0 \text{ in } \bdry
    \end{aligned}
\end{equation}
where $\Om \subset \sr^N$, $N\geq 2$, denotes a bounded smooth domain. Here, $p,q \in (1, \infty)$, $s\in (0,1)$ with  $p>sq$ and $0< \delta <1$. Also, $\max \{ p-1, q-1\} < r \leq r^*= \max \{p^* -1,\ q_s^* -1 \}$ and $\la>0$ is a parameter. The operator consists of the $p$-Laplacian and the fractional $q$-Laplacian, while the right-hand side combines a singular term with a superlinear, possibly critical, power-type nonlinearity. Problems of this form naturally arise in models incorporating both local diffusion and long-range interactions, and the mixed nature of the operator, together with the presence of a singular nonlinearity, leads to substantial analytical challenges.

The problem \eqref{P_lambda} is rooted in the classical convex-concave type equations
\begin{align}\label{eqn1.1}
    -\De u = \la u^{\sigma} + u^r \text{ in } \Om,\ u=0 \text{ on } \partial \Omega,
\end{align}
where $0<\sigma<1<r \leq 2^* -1$. The systematic study of such problems was initiated by the celebrated works of Brezis-Nirenberg \cite{brezis1983existence} and Ambrosetti-Brezis-Cerami \cite{ambrosetti1994convexconcave}, which laid the foundation for the analysis of existence, non-existence, and multiplicity of positive solutions for variational elliptic problems. The theory was subsequently extended to quasilinear equations driven by $p$-Laplacian; see, for instance, \cite{ambrosettigarciaperal1996multiplicity}. Multiplicity results in the critical growth regime were later obtained in \cite{garcia1994critical, garcia2000minimizer}, under additional restrictions on the range of $p$. In recent years, analogous convex–concave type problems have also been investigated for nonlocal and mixed local–nonlocal operators, leading to various existence and multiplicity results; for which we refer to \cite{brandle2013fractional, barrios2015convexconcave, weisu2015fractional, yezhang2024multiplicity, silva2024mixed, biagi2025convexconcave, dhanya2025multiplicityresultsmixedlocal, bhakta2025quasilinearproblemsmixedlocalnonlocal} among others.

A parallel line of research replaces the concave term with a singular nonlinearity, leading to problems of the form
\begin{align}\label{Lu_eqn}
    \mathcal{L} u = \frac{\la}{u^{\delta}} + u^r \text{ in } \Om,\ u>0 \text{ in } \Om,\ u=0 \text{ on } \partial \Omega
\end{align}
where $0<\delta<1$. Despite the singularity, these problems exhibit bifurcation and multiplicity phenomena similar to those observed in convex–concave equations. For the classical Laplacian $\mathcal{L}=-\Delta$, early works such as \cite{coclite1989singular, yijing2001subcritical} and \cite{hirano2004critical} established multiplicity results for small values of $\la$. Later, Haitao \cite{haitao2003multiplicity} provided a global multiplicity result for \eqref{Lu_eqn} in the Laplacian setting, and Giacomoni-Schindler-Tak\'a\v{c} \cite{giacomoni2007sobolev} extended these findings to the $p$-Laplacian. A notable feature in both \cite{haitao2003multiplicity} and \cite{giacomoni2007sobolev} is that the first solution is obtained as a local minimizer of the associated energy functional, without any smallness assumption on $\la$. Recently, attention has also turned to nonlocal operators, such as $\mathcal{L}=\s$ or $\mathcal{L}=\ps$. For related existence and multiplicity results, see \cite{barrios2015multiplicity, giacomoni2019verysingular, adimurthi2018fractional, tuhina2019nehari} and the references therein.

Inspired by the developments in the purely local and purely nonlocal settings, recent studies address the PDE \eqref{Lu_eqn} in the context of mixed local-nonlocal operators. In particular, \cite{garain2023subcritical} and \cite{bal2024multiplicitysolutionsmixedlocalnonlocal} examined the subcritical problems associated with $\mathcal{L}= -\De + \s$ and $\mathcal{L}= -\De_p + \ps$ respectively and obtained two positive solutions for small $\la$, leading to local multiplicity results. In the critical case, Biagi et. al. \cite{biagi2024critical} considered the operator $\mathcal{L}= -\De + \epsilon \s$ with $r= 2^* -1$ and established multiplicity of solutions for sufficiently small values of both $\epsilon$ and $\la$. Following the approaches of \cite{ biagi2024critical, biagi2025convexconcave}, further studies focused on the operator $\mathcal{L}=-\De_p +\epsilon \ps$, particularly in the critical growth regime; see, for instance, \cite{bhakta2025quasilinearproblemsmixedlocalnonlocal, sanjit2025multiplicity}. Although the parameter $\epsilon$ serves as a convenient technical tool in establishing the existence of multiple solutions, the resulting multiplicity conclusions in these articles are essentially local in nature. 

This limitation motivates the present work. We develop a global bifurcation-type framework that yields existence, non-existence, and multiplicity results in both subcritical and critical regimes for mixed nonlinear operators with singular nonlinearities. A sharp threshold value $\Lambda$ separating existence and non-existence regions with respect to $\lambda$ is determined via non-existence results for a generalized eigenvalue problem associated with mixed $(p,q)$ operators.
We then establish a global multiplicity result in the subcritical case, which is new even in the linear case. The main contribution of this article lies in obtaining the global multiplicity for the critical exponent problem in the presence of a singular nonlinearity. This is achieved through refined estimates of Aubin-Talenti-type functions and is new even in the homogeneous case $p=q$. Two auxiliary results play a central role in our analysis and are of independent interest: a Hopf-type Strong Comparison Principle and a Sobolev versus H\"older local minimizer result for mixed $(p,q)$ operators. These results are instrumental in lifting multiplicity from a local to a global setting and in recovering the mountain-pass geometry of the associated energy functional, particularly in the nonlinear case.

With this conceptual overview of our main results, we now turn to presenting the precise statements of the theorems. To this end, we first introduce the energy space and formalize the notion of weak solutions to \eqref{P_lambda}. The natural solution space for  $(P_\lambda)$ is $\mathbb{X}_0 (\Om)$, defined as  $$\mathbb{X}_0 (\Om):= \wop \cap W^{s,q}_0 (\Om),$$ which endowed with the norm $\|u \|_{\mathbb{X}_0 (\Om)}:= \|u\|_{\wop} + \|u\|_{W^{s,q}_0 (\Om)}$, is a Banach space. When $q \leq p$, the chain of embeddings $\wop \hookrightarrow W^{1,q}_0(\Om) \hookrightarrow W^{s,q}_0 (\Om)$ ensures that $\mathbb{X}_0 (\Om) = \wop.$  On the other hand, when $p<q$, the absence of a suitable embedding forces us to work with the intersection of two Sobolev spaces $\mathbb{X}_0 (\Om)$ as it stands.

\begin{definition}
    We say $u\in \mathbb{X}_0(\Om)$ is a weak solution of \eqref{P_lambda} if $\text{ess}\inf_{\omega}  u >0$ for every $\omega\Subset\Om$ and for every $\varphi\in\test$, there holds
    \begin{multline*}
        \int_{\Om} |\na u|^{p-2} \na u \cdot \na \varphi\ dx + \int_{\sr^N \times \sr^N} \frac{|u(x) - u(y)|^{q-2} (u(x) - u(y)) (\varphi(x) - \varphi(y))}{|x-y|^{N+sq}}\ dx\ dy\\
        = \int_{\Om} \frac{\la}{u^{\delta}} \varphi\ dx + \int_{\Om} u^r \varphi\ dx.
    \end{multline*}
\end{definition}

The first main result addresses the existence and non-existence of solutions for \eqref{P_lambda}.

\begin{theorem}[Existence and non-existence]\label{Theorem_first_solution}
    There exists $\Lambda > 0$ such that the problem \eqref{P_lambda} admits
    \begin{itemize}
        \item[(a)] at least one solution for every $0 < \la \leq \Lambda$,
        \item[(b)] a minimal solution for every $0 < \la \leq \Lambda$ and the minimal solutions are strictly increasing with respect to $\la$,
        \item[(c)] no solution for every $\la > \Lambda$.
    \end{itemize}
\end{theorem}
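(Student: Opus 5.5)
We set $\Lambda := \sup\{\la>0 : \eqref{P_lambda} \text{ admits a weak solution}\}$ and prove three things: the set of admissible $\la$ is a nonempty interval, $\Lambda<\infty$, and $\Lambda$ itself is attained. Throughout we rely on two standing facts about the mixed operator $-\p+\qs$. The first is a weak comparison principle for sub/supersolutions in $\xo(\Om)$; it follows by testing with $(u-v)^+$ and using the monotonicity of both the local and nonlocal parts. The second is the existence of a unique positive solution $\ul_\la$ of the purely singular problem $-\p u+\qs u=\la u^{-\delta}$. We obtain $\ul_\la$ by minimizing the (strictly convex, coercive) functional $\frac1p\|\na u\|_p^p+\frac1q[u]_{s,q}^q-\frac{\la}{1-\delta}\int_\Om (u^+)^{1-\delta}$. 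Its interior positivity, $\operatorname{ess\,inf}_\omega \ul_\la>0$, follows from comparison with $c\,\phi$, where $\phi$ is a suitable positive solution of a torsion-type problem. The solution $\ul_\la$ is a subsolution of \eqref{P_lambda} for every $\la$.

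\emph{Existence for small $\la$.} Let $e\in\xo(\Om)\cap L^\infty$ solve $-\p e+\qs e=1$, with $e\ge c\,d(x)$ by a Hopf-type bound. Take $\bar u=Me$. The condition for $\bar u$ to be a supersolution is
\[
M^{p-1}(-\p e)+M^{q-1}\qs e \;\ge\; \la (Me)^{-\delta}+(Me)^r .
\]
The homogeneity mismatch between the two parts of the operator is handled by the inequality $M^{p-1}(-\p e)+M^{q-1}\qs e\ge \min(M^{p-1},M^{q-1})$, which holds because each piece is nonnegative in a suitable sense. If that inequality is not available, we instead define $e_M$ as the solution of $-\p e_M+\qs e_M=M$ and use $L^\infty$ bounds in $M$. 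Since $r>\max(p,q)-1$, we can fix $M$ and then choose $\la$ small so that the supersolution inequality holds, using $e^{-\delta}\le C d^{-\delta}$ and the bound $-\p e \cdots$ near $\pa\Om$. Uniqueness gives $\ul_\la\le \bar u$. Monotone iteration starting from $\ul_\la$ then produces a solution. Each step solves $-\p w+\qs w=\la w_{k-1}^{-\delta}+w_{k-1}^r$, and the steps remain between $\ul_\la$ and $\bar u$. In fact we take the sequence increasing from $\ul_\la$, which yields the minimal solution $u_\la$: any solution $u$ is a supersolution dominating $\ul_\la$ by comparison, so inductively $w_k\le u$.

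\emph{Interval structure and monotonicity (b).} If $u_\mu$ solves $(P_\mu)$ and $\la<\mu$, then $u_\mu$ is a strict supersolution for $(P_\la)$ and $\ul_\la\le u_\mu$. Iteration gives a solution of $(P_\la)$, hence the minimal one, with $u_\la\le u_\mu$. To upgrade this to strict monotonicity $u_\la<u_\mu$, note that $u_\mu-u_\la$ satisfies a supersolution inequality with the positive right-hand side $(\mu-\la)u_\mu^{-\delta}$ plus nonnegative terms. We then apply the strong comparison principle for the mixed operator: if $u\le v$ and $v$ is a strict supersolution, then $u<v$ in $\Om$.

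\emph{Nonexistence for large $\la$ (c).} Let $u$ be a solution. Since $\la t^{-\delta}+t^r\ge c_0\la^{\theta}\,\max(t^{p-1},t^{q-1})$ with $\theta=\frac{r-\max(p,q)+1}{r+\delta}>0$, the solution $u$ is a supersolution of
\[
-\p w+\qs w=c_0\la^\theta \max(w^{p-1},w^{q-1}).
\]
We test with $\varphi^p/u^{p-1}$ and $\varphi^q/u^{q-1}$, where $\varphi\in C_c^\infty$ is positive, in the spirit of a Picone identity, valid for both the local and the fractional parts. This bounds $c_0\la^\theta$ above by a generalized eigenvalue type quantity depending only on $\Om,p,q,s$. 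This step, the Picone inequality for the mixed $(p,q)$ operator with distinct homogeneities, is the main obstacle. The plan is to split the right-hand side into the regions $\{u\le1\}$ and $\{u>1\}$, and to use the discrete Picone inequality $|a-b|^{q-2}(a-b)\big(\frac{\phi(x)^q}{a^{q-1}}-\frac{\phi(y)^q}{b^{q-1}}\big)\le|\phi(x)-\phi(y)|^q$.

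\emph{$\Lambda$ is attained.} Take $\la_n\uparrow\Lambda$ and the minimal solutions $u_{\la_n}$. Minimal solutions are obtained as limits of iteration from below, so their energy is nonpositive, which gives a bound in $\xo(\Om)$. Up to a subsequence, $u_{\la_n}\rightharpoonup u$ and $u_{\la_n}\uparrow u$ pointwise, with $u\ge \ul_{\la_1}$. We pass to the limit in the weak formulation by monotone convergence and Vitali. In the critical case we use instead the monotone pointwise convergence together with the uniform $L^\infty$ bound stated in the paper. This shows $u$ solves $(P_\Lambda)$ and gives (a) at $\la=\Lambda$.
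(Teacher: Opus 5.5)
There are genuine gaps on both the existence and the non-existence side.

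\textbf{Existence side.} Your supersolution $\bar u=Me$ does not work. The right-hand side of $(P_\lambda)$ contains $\lambda(Me)^{-\delta}\sim\lambda d(x)^{-\delta}$, which blows up at $\partial\Omega$. For $p=q$, however, $-\Delta_p(Me)+(-\Delta_q)^s(Me)\equiv M^{p-1}$, and in general this quantity stays bounded whenever $(-\Delta_q)^s e$ does. Testing with nonnegative $\varphi$ concentrated near $\partial\Omega$ therefore violates the weak supersolution inequality for every $\lambda>0$. Your fallback $e_M$, whose right-hand side is exactly $M$, fails for the same reason. The bound $M^{p-1}(-\Delta_p e)+M^{q-1}(-\Delta_q)^s e\ge\min\{M^{p-1},M^{q-1}\}$ is also unjustified, since neither piece is known to be nonnegative.

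Moreover, your iteration $-\Delta_p w_k+(-\Delta_q)^s w_k=\lambda w_{k-1}^{-\delta}+w_{k-1}^r$ is not monotone. The map $t\mapsto\lambda t^{-\delta}+t^r$ is decreasing near $0$, so $w_{k-1}\ge w_{k-2}$ does not imply $w_k\ge w_{k-1}$, and $w_{k-1}\le u$ does not imply $w_k\le u$. Hence neither invariance of $[\underline u_\lambda,\bar u]$ nor minimality follows.

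The paper's scheme (Lemma~\ref{minimal solution}) keeps the singular term on the left: $-\Delta_p w_k+(-\Delta_q)^s w_k-\lambda w_k^{-\delta}=w_{k-1}^r$, with $w_0=\underline u_\lambda$. Since $w\mapsto-\Delta_p w+(-\Delta_q)^s w-\lambda w^{-\delta}$ is monotone and $t^r$ is increasing, weak comparison gives $w_{k-1}\le w_k\le u$ for every solution (indeed every supersolution) $u$ of $(P_\lambda)$. With that correction your interval argument for $\lambda<\mu$ is fine, and unlike the paper's route through Corollary~\ref{corollary} it uses only weak comparison. You still need one admissible $\lambda_0$, though. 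The paper obtains it without any supersolution: it minimizes the truncated functional $\underline I_{\lambda_0}$ over a small ball of $\mathbb X_0(\Omega)$ on whose boundary sphere the functional is positive, using Ekeland's principle and a Brezis--Lieb splitting in the critical case (Lemma~\ref{solution for lambda_0}).

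\textbf{Non-existence and the endpoint.} Part (c) is not proved. You correctly identify the Picone step as the obstacle but leave it open. Splitting the right-hand side into $\{u\le 1\}$ and $\{u>1\}$ does not touch the actual difficulty, which is the cross terms: the fractional $q$-part tested with $\varphi^p/u^{p-1}$, and the $p$-Laplacian tested with $\varphi^q/u^{q-1}$. These are homogeneous of degree $q-p$ (resp.\ $p-q$) in $u$, so no Picone-type bound independent of the size of $u$ is possible. Whatever estimate you obtain will involve $\|u\|_\infty$ or $\inf_{\operatorname{supp}\varphi}u$, hence $\lambda$, and that defeats a $\lambda$-independent bound on $c_0\lambda^\theta$.

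The paper sidesteps this in Lemma~\ref{nonexistence}. It fixes $r_1<\min\{p,q\}-1$ and $\max\{p,q\}-1<r_2<r$, and invokes the known fact that $-\Delta_p u+(-\Delta_q)^s u=\mu_0(u^{r_1}+u^{r_2})$ has only the trivial solution for $\mu_0$ large. It then observes that $\lambda t^{-\delta}+t^r>\mu_0(t^{r_1}+t^{r_2})$ for all $t>0$ once $\lambda$ is large, so a solution of $(P_\lambda)$ would be a supersolution of that problem. Minimizing the energy truncated above by it gives a positive solution (nontrivial because of the sublinear $r_1$-term), which is a contradiction.

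Finally, at $\lambda=\Lambda$ your claim that minimal solutions have nonpositive energy ``because they are limits of the iteration'' has no justification. The claim is true, but for a different reason. The minimal solution $\hat u_\lambda$ is the global minimizer of the functional truncated between $\underline u_\lambda$ and $\hat u_\lambda$, since any such minimizer is a solution lying below $\hat u_\lambda$. Hence $I_\lambda(\hat u_\lambda)\le I_\lambda(\underline u_\lambda)<0$. The paper instead uses its variationally obtained solutions for this bound. Also, monotone convergence of $u_{\lambda_n}$ alone does not let you pass to the limit in $\int_\Omega|\nabla u_{\lambda_n}|^{p-2}\nabla u_{\lambda_n}\cdot\nabla\varphi\,dx$. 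You need a.e.\ convergence of gradients, as in Lemma~\ref{pointwise_convergence_of_gradient}.
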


Next, we move on to the global multiplicity results and present the subcritical case first.

\begin{theorem}[Global multiplicity in the subcritical case]\label{Theorem_second_solution_subcritical}
    Suppose $r< r^*$ and $p,q,s$ satisfy one of the following assumptions:
    \begin{itemize}
        \item[(i)] (linear case) $p=q=2$ and $s\in (0,1)$,
        \item[(ii)] (nonlinear case) $p\in (1,\infty)$, $q \geq 2$ and $s\in \left(0, \frac{1}{q'} \right)$, $q'$ being the H\"older conjugate of $q$.
    \end{itemize}
    Then the problem \eqref{P_lambda} admits at least two distinct solutions for every $\la\in (0, \Lambda)$.
\end{theorem}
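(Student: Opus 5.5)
The plan is the Haitao / Giacomoni–Schindler–Takáč strategy: first find a solution that is a local minimizer of the energy, then a second one by a mountain-pass argument around it. Fix $\la\in(0,\Lambda)$ and pick $\la<\mu<\Lambda$. By Theorem \ref{Theorem_first_solution}, let $\underline{u}_\la$ be the minimal solution of $(P_{\la})$ and $u_\mu$ a solution of $(P_{\mu})$. Then $\underline{u}_\la$ is a subsolution and $u_\mu$ a strict supersolution of $(P_\la)$, and $\underline{u}_\la\le u_\mu$.

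\emph{Step 1 (strict separation).} Apply the Hopf-type strong comparison principle announced in the introduction to $\underline{u}_\la\le u_\mu$, whose right-hand sides differ by $(\mu-\la)u_\mu^{-\delta}>0$. This gives $u_\mu-\underline{u}_\la\ge c\, d(x)$ in $\Om$, where $d(x)=\mathrm{dist}(x,\partial\Om)$. The hypotheses (i) or (ii) are exactly what that principle needs. In case (ii), $q\ge 2$ and $s<1/q'$ make the nonlocal term $C^1$-compatible near the boundary, so the difference quotient of $(-\Delta_q)^s$ can be controlled by the distance function. We use the principle as proved there.

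\emph{Step 2 (local minimizer).} Consider the cut-off functional $\tilde I_\la$, whose nonlinearity is truncated to the interval $[\underline{u}_\la, u_\mu]$. It is coercive and weakly lower semicontinuous on $\mathbb{X}_0(\Om)$, so it has a global minimizer $u_0$ with $\underline{u}_\la\le u_0\le u_\mu$, and $u_0$ solves $(P_\la)$. Using Step 1 once more, for both pairs $(\underline{u}_\la,u_0)$ and $(u_0,u_\mu)$ (via a strict sub/super comparison, or by perturbing $\la$), $u_0$ lies in the interior of the order interval in the $C^0_d$-topology. Hence $u_0$ is a local minimizer of the true functional $I_\la$ in the $C^1$ (or $C_d$) topology. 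The Sobolev versus Hölder local minimizer result then upgrades this to a local minimizer in $\mathbb{X}_0(\Om)$; it rests on the uniform $L^\infty$ bound and the boundary regularity. If $u_0$ is not a strict local minimizer, there are infinitely many solutions and we are done.

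\emph{Step 3 (second solution).} Translate by setting $u=u_0+w$, $w\ge0$, and work with $J(w)=I_\la((u_0+w^+))-I_\la(u_0)$ restricted to nonnegative perturbations. Since $r$ is subcritical, $J$ satisfies the Palais–Smale condition: PS sequences are bounded because $r+1>\max\{p,q\}$ (Ambrosetti–Rabinowitz type estimate, with the singular term sublinear), and the compact embedding together with the $(S_+)$ property of $-\Delta_p+(-\Delta_q)^s$ gives strong convergence. Because $0$ is a strict local minimum and $J(t\phi)\to-\infty$ as $t\to\infty$, the mountain pass theorem (or Ghoussoub–Preiss, if the minimum is not strict) yields a critical point $w_1\ne0$. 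This gives a second solution $u_0+w_1\ne u_0$, which is a weak solution because $u_0+w_1\ge u_0\ge\underline{u}_\la$ keeps the singular term controlled.

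The main obstacle is Step 1–2 in the nonlinear case: getting the strict $d(x)$-separation so that $u_0$ is a $C^1_0$ interior point. That is precisely why the restrictions $q\ge2$, $s<1/q'$ appear. The remaining steps are routine once that comparison principle and the Sobolev-vs-Hölder result are available.
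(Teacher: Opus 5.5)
Your architecture is the paper's: barriers, a global minimizer $u_0$ of a cut-off functional, strong and Hopf-type comparison to place $u_0$ in the $C^1_0$-interior of the order interval, the upgrade via Theorem~\ref{Theorem_minimizer}, then mountain pass or Ghoussoub--Preiss with the subcritical Palais--Smale condition. However, Step~2 fails with the lower barrier you chose.

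You take the \emph{minimal solution} of $(P_\la)$ (the paper's $\hat u_\la$) as the lower barrier. That is a solution, not a strict subsolution. For the pair $(\hat u_\la,u_0)$ both functions solve $(P_\la)$, so Theorem~\ref{Theorem_SCP}, which requires $f_1\not\equiv f_2$, gives nothing unless you already know $u_0\neq\hat u_\la$. Nothing in your argument excludes $u_0=\hat u_\la$, and one in fact expects the minimal solution to be the local minimizer. Without $u_0\ge\hat u_\la+\epsilon\,d$, every $C^1_0$-ball around $u_0$ contains functions dipping below the lower truncation level. There $\tilde I_\la$ and $I_\la$ no longer differ by a constant, so global minimality of $u_0$ for $\tilde I_\la$ says nothing about $I_\la$. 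This is exactly why Corollary~\ref{corollary} assumes that neither barrier is a solution.

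The paper (Lemma~\ref{multiplicity_local minimizer}) instead uses the solution $\underline{u}_\la$ of the purely singular problem $-\Delta_p u+(-\Delta_q)^s u=\la u^{-\delta}$. Writing both equations as $-\Delta_p u+(-\Delta_q)^s u-\la u^{-\delta}=f$ gives $f_1=0\lneq u_0^r=f_2$, so both parts of Theorem~\ref{Theorem_SCP} apply. Your parenthetical ``or by perturbing $\la$'' can also be made rigorous. Use the minimal solution $\hat u_{\la'}$ of $(P_{\la'})$ with $\la'<\la$, and normalize the singular term with $\la'$, so that $f_1=\hat u_{\la'}^{\,r}\le(\la-\la')u_0^{-\delta}+u_0^r=f_2$. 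But this must replace your choice of barrier, not be appended to it.

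Step~3 also needs repair. The functional $J(w)=I_\la(u_0+w^+)-I_\la(u_0)$ is not $C^1$ on $\mathbb{X}_0(\Om)$, because $w^+$ sits inside the $p$-Dirichlet and Gagliardo terms. At a $w$ vanishing on a set $E$ of positive measure, the one-sided derivatives along $\varphi$ and $-\varphi$ pick up integrals of $|\nabla u_0|^{p-2}\nabla u_0\cdot\nabla\varphi$ (minus the nonlinearity times $\varphi$) over $E\cap\{\varphi>0\}$ and $E\cap\{\varphi<0\}$ respectively. These do not match in general. Restricting to nonnegative $w$ only puts you on a cone, where the mountain pass theorem is not available.

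The fix is to truncate only the nonlinearity and keep the operator intact, as the paper does with $\underline{I}_\la$ (nonlinearity frozen below $\underline{u}_\la$). This functional is $C^1$, and its critical points lie above $\underline{u}_\la$ by weak comparison, hence solve $(P_\la)$. Thanks to the separation $u_0\ge\underline{u}_\la+\epsilon_1 d$ from the corrected Step~2, it coincides with $I_\la$ up to an additive constant on a $C^1_0$-neighbourhood of $u_0$. That is what lets you transfer local minimality of $u_0$ to $\underline{I}_\la$ and run the mountain pass (or the zero-altitude Ghoussoub--Preiss argument) around $u_0$, with compactness from Lemma~\ref{PS lemma}(a).
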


Turning to the critical case, the classical variational approach based on Ekeland’s principle and perturbations via Aubin–Talenti functions cannot be directly applied in the mixed local–nonlocal setting. This is due to the lack of attainment of the best constant in the mixed Sobolev inequality(see \cite{biagi2025brezisnirenberg}) and the loss of scale invariance of the operator $-\p + \ps.$ Consequently, careful estimates yield structural conditions on 
$p,q,s$ and $N$ ensuring a global multiplicity.

\begin{theorem}[Global multiplicity in the critical case]\label{Theorem_second_solution_critical}
    Suppose that $2\leq q\leq p$ with $p \in \left( \frac{3N}{N+3},\ 3 \right)$ and $r= r^* = p^* -1$. Additionally, let
    \[0<s< \min \left\{ \frac{1}{q'},\ 1- \frac{1}{q} \left(\frac{N-p}{p-1} - N \left( 1- \frac{q}{p} \right) \right) \right\},\]
    where $q'$ is the H\"older conjugate of $q$. Then the problem \eqref{P_lambda} admits at least two distinct solutions for every $\la\in (0, \Lambda)$.
\end{theorem}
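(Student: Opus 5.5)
The plan follows the Haitao / Giacomoni--Schindler--Tak\'a\v{c} scheme. A second solution is produced by a mountain pass argument around a first solution that is a local minimizer, with a translated functional and careful Aubin--Talenti estimates.

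Fix $\la\in(0,\Lambda)$ and choose $\la<\la'<\Lambda$. By Theorem \ref{Theorem_first_solution}(b), the minimal solutions satisfy $u_\la\le u_{\la'}$, and $u_\la\neq u_{\la'}$. Together with the Hopf-type strong comparison principle announced in the introduction, this gives $u_{\la'}-u_\la\ge c\,d(x)$ in $\Om$, where $d$ is the distance to $\pa\Om$ and $c>0$. The hypotheses $q\ge2$ and $s<1/q'$ are exactly the range in which that principle is available.

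Next consider the truncated functional $\tilde I_\la$. Its nonlinearity is frozen below $\underline u$ (a subsolution, e.g.\ a multiple of the torsion-type solution comparable to $d^{\,\cdot}$) and above $u_{\la'}$. A minimizer of $\tilde I_\la$ on the convex set $[\underline u,u_{\la'}]$ yields a solution $u_0$. By the strong comparison principle, $u_0$ lies in the $C^1_d$-interior of the order interval, so $u_0$ is a $C^1_d$ local minimizer of the true functional $I_\la$. The Sobolev versus H\"older local minimizer result (proved later in the paper, together with the uniform $L^\infty$ bound and $C^{1,\alpha}$ regularity) then upgrades $u_0$ to an $\mathbb X_0(\Om)$ local minimizer.

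Then translate: $J(v)=I_\la(u_0+v^+)-I_\la(u_0)$, or equivalently restrict to $u\ge u_0$ by replacing the nonlinearity below $u_0$ by its value at $u_0$. Critical points of this functional are solutions $u\ge u_0$. Either $0$ is not a strict local minimizer, in which case there is a continuum of critical points and we are done, or it is. In the strict case we use a mountain pass at level $c=\inf_\gamma\max J$ and take a Palais--Smale sequence, via Ekeland or Ghoussoub--Preiss in the degenerate case.

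It remains to prove compactness below the threshold
\[
c<\frac1N S_p^{N/p},
\]
where $S_p$ is the best constant for the local $p$-Laplacian. The nonlocal term is subordinate because $q\le p$ and $\mathbb X_0=\wop$. Via Brezis--Lieb type splitting, PS sequences below this level have a strongly convergent subsequence. The limit is a solution different from $u_0$ because $c>0=J(0)$.

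The main obstacle is verifying this level estimate. I would test with $u_0+tU_\varepsilon$, where $U_\varepsilon$ is a cut-off Aubin--Talenti function for $-\p$. Expanding $\|\na(u_0+tU_\varepsilon)\|_p^p$ for $p<3$ requires the refined inequality $|a+b|^p\le|a|^p+|b|^p+p|a|^{p-2}a\cdot b+C|a||b|^{p-1}$ and its lower counterpart. The equation for $u_0$ then cancels the linear terms, and what remains must be compared with the gain $\int u_0 U_\varepsilon^{p^*-1}\sim\varepsilon^{(N-p)/p}$. Here $p>3N/(N+3)$ ensures that the error terms of order $\varepsilon^{(N-p)/(p-1)}$ and similar are dominated. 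The fractional Gagliardo seminorm $[U_\varepsilon]_{s,q}^q$ must be estimated as $O(\varepsilon^{\kappa})$, with $\kappa$ computed from scaling: roughly $\kappa=\frac{N-p}{p}q\,-\,\ldots$ plus a correction involving $N(1-q/p)$. The upper bound on $s$ is precisely the condition that this exponent exceeds the gain exponent, so the nonlocal term is negligible. The singular term $\la\int[(u_0+tU_\varepsilon)^{1-\delta}-u_0^{1-\delta}]$ contributes nonnegatively in the energy sign that helps, since $u_0>0$. Checking these exponent inequalities carefully is where I expect the real work.
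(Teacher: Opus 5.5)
Your route is essentially the paper's. You obtain the first solution as in Corollary~\ref{corollary} and the existence lemma below $\Lambda$: an order interval, SCP and Hopf-type SCP, then Theorem~\ref{Theorem_minimizer}. The second solution comes from a mountain pass/Ghoussoub--Preiss argument with a local Palais--Smale condition below $\frac1N S_0^{N/p}$ and the path $u_0+tU_\varepsilon$. The one structural difference is that you truncate at $u_0$, Giacomoni--Schindler--Tak\'a\v{c} style, while the paper truncates at $\underline{u}_\lambda$ and uses $\underline{I}_\lambda$.

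\emph{Compactness step.} Let $v_n$ be a PS sequence for your $J$ with weak limit $v$, where $v\ge u_0$ is a solution. Brezis--Lieb splitting only gives that either $v_n\to v$ strongly, or $c\ge J(v)+\frac1N S_0^{N/p}$. To get strong convergence for all $c<\frac1N S_0^{N/p}$ you need $J(v)\ge 0$, i.e.\ $I_\lambda(v)\ge I_\lambda(u_0)$, and nothing you say yields this. A solution $v\ge u_0$ with $J(v)<0$ is not excluded, and then compactness can fail at levels in $[J(v)+\frac1N S_0^{N/p},\frac1N S_0^{N/p})$. The missing step is a dichotomy. If the weak limit differs from $u_0$, you already have a second solution. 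If it equals $u_0$, then $J(v)=0$ and the splitting closes. The paper builds this in by assuming, without loss of generality, that $u_\lambda$ has least energy among all solutions; this is what makes part (b) of its Palais--Smale lemma work. Also, in the Ghoussoub--Preiss case the level may be $c=0$. There distinctness from $u_0$ comes from $\mathrm{dist}(v_n,\partial B_R)\to0$, not from $c>0$.

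\emph{Singular term in the level estimate.} It has the opposite sign to what you claim. Once the equation for $u_0$ cancels the linear terms, the remainder is
\[
L_2=\frac{\lambda}{1-\delta}\int_\Omega\Big[u_0^{1-\delta}+(1-\delta)\,t\,u_0^{-\delta}U_\varepsilon-(u_0+tU_\varepsilon)^{1-\delta}\Big]dx\ \ge 0
\]
by concavity, so it raises the energy. If you instead keep the raw term $-\frac{\lambda}{1-\delta}\int[(u_0+tU_\varepsilon)^{1-\delta}-u_0^{1-\delta}]\le 0$, the linear term $\lambda t\int u_0^{-\delta}U_\varepsilon\sim\varepsilon^{\frac{N-p}{p(p-1)}}$ is left uncancelled. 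Since $p\ge q\ge 2$ here, that term is not $o(\varepsilon^{\frac{N-p}{p}})$.

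What you must prove is $L_2=o(\varepsilon^{\frac{N-p}{p}})$. Since $u_0\ge c>0$ on the support of $U_\varepsilon$, the integrand is at most $C\min\{U_\varepsilon,U_\varepsilon^2\}$. Splitting at $|x-y|\sim\varepsilon^{1/p}$ gives $\int\min\{U_\varepsilon,U_\varepsilon^2\}=O(\varepsilon^{N/p}|\log\varepsilon|+\varepsilon^{\frac{2(N-p)}{p(p-1)}})$, which is negligible because $p<3$.

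Relatedly, $p>\frac{3N}{N+3}$ is not what makes errors of order $\varepsilon^{\frac{N-p}{p-1}}$ negligible; those are $o(\varepsilon^{\frac{N-p}{p}})$ for every $p>1$. That hypothesis is equivalent to $p^*>3$, the range where $(a+b)^{p^*}\ge a^{p^*}+b^{p^*}+p^*a^{p^*-1}b+p^*ab^{p^*-1}$ holds. This presumably is what controls the critical-term remainder $L_1$ (the paper itself only cites \cite{garciaperal1994critical} for this).
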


The above result may be viewed in relation to approaches involving the $\epsilon$-dependent mixed operators, the discussion on which was postponed earlier. In such works, a sufficiently small choice of $\epsilon$ moderates the impact of the nonlocal term, thereby creating a favorable variational structure to admit a second solution when $\epsilon$ and $\lambda$ both are small. More precisely, the $\epsilon$-dependent framework generally ensures the existence of a first positive solution for all $\la\in (0,\Lambda_\epsilon)$, while obtaining a second solution is restricted solely to a smaller subinterval $(0,\Lambda_*)$ with $\Lambda_* < \Lambda_{\epsilon}$. As a result, the intermediate zone $(\Lambda_*, \Lambda_\epsilon)$ remains unresolved, leaving the overall multiplicity picture incomplete. As for global multiplicity in the case $\epsilon=1$, it was obtained when $p=q=2$, $s\in \left(0, \frac{1}{2}\right)$, $N=3$ and $r=2^* -1$ (see corollary 1.3, \cite{biagi2024critical} and corollary 1.5, \cite{biagi2025convexconcave}). Notably, when $p=q=2$, the range obtained in Theorem \ref{Theorem_second_solution_critical} coincides with the one obtained in \cite{biagi2024critical} and also the range of $s$ complements the same contained in \cite{dhanya2025multiplicityresultsmixedlocal}.

We conclude the discussion on global multiplicity by emphasizing that our results rely on a framework that extends existing work on mixed operators and captures genuinely global effects for nonlinear problems. This approach is driven by two key tools: a Hopf-type Strong Comparison Principle and an equivalence between Sobolev and H\"older local minimizers, both new in this setting. We now turn to these results. We now turn to a detailed analysis of these two tools.

\subsection{Strong Comparison Principle}

The Strong Comparison Principle (SCP) is a fundamental concept that extends the classical Strong Maximum Principle (SMP) and Hopf's lemma to a comparison framework. Here, we establish SCP and Hopf-type SCP for mixed operators in the presence of singular nonlinearities.

\begin{theorem}[Comparison Principle]\label{Theorem_SCP}
    Let $1<p,q< \infty$ with $p>sq$, $0< \delta <1$ and $f_1,f_2$ be non-negative continuous functions in $\Om$ satisfying $f_1 \leq f_2$, $f_1 \not\equiv f_2$ in $\Om$. Assume, in addition, one of the following holds:
    \begin{itemize}
        \item[(i)] (linear case) $p=q=2$ and $s\in (0,1)$,
        \item[(ii)] (nonlinear case) $p\in (1,\infty)$, $q \geq 2$ and $s\in \left(0, \frac{1}{q'} \right)$, $q'$ being the H\"older conjugate of $q$.
    \end{itemize}
    If $u_1, u_2 \in C^{1,\alpha}(\overline{\Om})$, for some $\alpha\in(0,1)$, $u_1, u_2 >0$ in $\Om$ and they satisfy
    \begin{equation}\tag{$Q$}\label{scp equations}
        \begin{aligned}
            -\p u_1 + \qs u_1 &- \frac{1}{u_1^{\delta}} = f_1 \text{ in } \Om\\
            -\p u_2 + \qs u_2 &- \frac{1}{u_2^{\delta}} = f_2 \text{ in } \Om\\
            u_1 =u_2 &=0 \text{ in } \bdry
        \end{aligned}
    \end{equation}
    then the following hold:
    \begin{itemize}
        \item[(a)] Strong Comparison Principle: $u_1 < u_2$ in $\Om$,
        \item[(b)] Hopf-type Strong Comparison Principle: $\frac{\pa u_2}{\pa\nu} < \frac{\pa u_1}{\pa\nu}$ on $\pa\Om$,
    \end{itemize}
    where $\nu = \nu (x_0)$ is the outward unit normal to $\pa\Om$ at $x_0 \in \pa\Om$.
\end{theorem}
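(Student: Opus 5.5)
We first show the weak comparison $u_1 \le u_2$ in $\Om$, and then strengthen it by a barrier argument. For the weak comparison we test the difference of the two equations in \eqref{scp equations} with $(u_1-u_2)^+$, which is admissible because both functions are $C^{1,\alpha}(\overline{\Om})$ and vanish outside $\Om$. Each term then has a favourable sign:
\begin{itemize}
\item the $p$-Laplacian term gives a nonnegative quantity by monotonicity of $\xi\mapsto|\xi|^{p-2}\xi$;
\item the fractional term is nonnegative by the standard pointwise inequality $(|a|^{q-2}a-|b|^{q-2}b)((a-b)^+ \text{-differences})\ge 0$ for the nonlocal $q$-Laplacian;
\item the singular term $-(u_1^{-\delta}-u_2^{-\delta})(u_1-u_2)^+\ge 0$, since $t\mapsto -t^{-\delta}$ is increasing;
\item the right-hand side is $\int (f_1-f_2)(u_1-u_2)^+\le 0$.
\end{itemize}
Since the left side is nonnegative and the right side nonpositive, the $p$-Laplacian part vanishes, so $(u_1-u_2)^+\equiv 0$.

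For (a) and (b) we set $w=u_2-u_1\ge 0$ and look for a linear (or linearized) inequality that $w$ satisfies. In the linear case $p=q=2$ we subtract the equations and obtain
\[-\Delta w + (-\Delta)^s w + c(x)\, w = f_2-f_1 \ge 0,\qquad c(x)=\frac{u_1^{-\delta}-u_2^{-\delta}}{u_2-u_1}\ge 0 \text{ where } w>0.\]
Here $c$ is locally bounded in $\Om$, and near $\pa\Om$ it behaves like $d(x)^{-1-\delta}$, because $u_i\sim d$ by the Hopf lemma for each $u_i$. We then argue as follows.
\begin{itemize}
\item \textbf{Interior positivity.} Suppose $w(x_0)=0$ at some interior point. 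Evaluating the equation pointwise (or in the viscosity sense) at this minimum, the nonlocal term satisfies $(-\Delta)^s w(x_0)=-\int w(y)|x_0-y|^{-N-2s}\,dy<0$ unless $w\equiv 0$. The term $-\Delta w(x_0)$ is $\le 0$ and $c\,w=0$. Hence $w\equiv 0$, which contradicts $f_1\not\equiv f_2$.
\item \textbf{Hopf-type inequality.} Fix $x_0\in\pa\Om$ and take an interior ball $B$ tangent at $x_0$. We compare $w$ with a barrier $\epsilon\,v$ in an annular region, where $v$ is a smooth radial function with $-\Delta v+(-\Delta)^s v + c\,v\le 0$. The nonlocal term is helped by the positivity of $w$ on a compact set away from the boundary, on which $w\ge m>0$; this provides a negative contribution $-\int_K m|x-y|^{-N-2s}\,dy$ that dominates. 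The real danger is the term $c\,v$, since $c$ blows up like $d^{-1-\delta}$, while $v\sim d$ gives $c\,v\sim d^{-\delta}$. This term must be absorbed by $-\Delta v$. We would therefore take $v$ with a strongly concave profile, e.g.\ $v=d-d^{1+\gamma}$-type with $\gamma<1-\delta$, or $v(x)=e^{-\kappa|x-x_c|^2}-e^{-\kappa R^2}$ with the singular correction, so that $-\Delta v\gtrsim d^{\gamma-1}\gg d^{-\delta}$ and the correction dominates. The weak comparison of the first step, applied to this linear operator, then gives $w\ge\epsilon v$ and hence $\pa_\nu w(x_0)<0$.
\end{itemize}

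In the nonlinear case (ii) we write the difference of the $p$-Laplacians as $-\mathrm{div}(A(x)\nabla w)$ with
\[A=\int_0^1 D(|\xi|^{p-2}\xi)\big|_{\xi=\nabla u_1+t\nabla w}\,dt.\]
Because $u_1,u_2\in C^{1,\alpha}(\overline{\Om})$ with $\nabla u_i\ne0$ near $\pa\Om$ (Hopf for each $u_i$), $A$ is uniformly elliptic and Hölder in a boundary strip, and the Hopf step takes place in such a strip. For interior points where the gradients vanish, we instead use the nonlocal term at a touching point as above. We linearize the fractional part similarly, with kernel
\[K(x,y)=(q-1)|x-y|^{-N-sq}\int_0^1|u_1(x)-u_1(y)+t(w(x)-w(y))|^{q-2}\,dt.\]
This is nonnegative since $q\ge2$, and it is bounded by $C|x-y|^{q-2-N-sq}$ because $u_i$ is Lipschitz. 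The assumption $s<1/q'$, i.e.\ $sq<q-1$, is exactly what makes this kernel integrable enough that the linearized nonlocal operator applied to the smooth barrier is a bounded (even $o(d^{-\delta})$) perturbation near the boundary. Its evaluation at interior minima also still gives a strictly negative sign.

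The main obstacle is the Hopf step: constructing a barrier that beats simultaneously the singular coefficient $c\sim d^{-1-\delta}$ and the linearized nonlocal term with merely $C^{1,\alpha}$-dependent kernel. This is where $\delta<1$ and $s<1/q'$ enter. We would first try the explicit power barrier $v=d-\tfrac{1}{\gamma}d^{1+\gamma}$ and estimate the nonlocal term of $v$ by splitting the integral into the regions $|x-y|<d(x)/2$ and $|x-y|\ge d(x)/2$.
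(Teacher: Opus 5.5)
\textbf{Gap 1: the Hopf step in case (ii).} Your weak comparison step is fine, and so is your bound on the linearized nonlocal kernel via Lipschitz bounds and $sq<q-1$. But the Hopf step in case (ii), which is the genuinely new part of the theorem, has a real gap.

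You note correctly that the linearized matrix $A$ is uniformly elliptic but only $C^{0,\alpha}$ in a boundary strip. That is exactly what defeats any explicit barrier. For $v=h(d)$,
\[
\mathrm{div}(A\nabla v)=h''(d)\,A\nabla d\cdot\nabla d+h'(d)\,\mathrm{div}(A\nabla d),
\]
and $\mathrm{div}(A\nabla d)$ involves second derivatives of $u_1,u_2$. It is only a distribution, and nothing in the hypotheses bounds it below by $-Cd^{\gamma-1}$. In weak form, $\int A\nabla d\cdot\nabla\psi$ is not controlled by $\int d^{\gamma-1}\psi$. So no explicit profile can be certified to satisfy $-\mathrm{div}(A\nabla v)+\mathcal{L}_2v+k\,d^{-1-\delta}v\le 0$.

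The missing idea is to let the operator produce its own barrier. The paper does this in three steps:
\begin{itemize}
\item It solves, in the strip $\Om_\eta$, the infinite semipositone problem $-\mathrm{div}(A\nabla z)=z^{\sigma}-\theta z^{-\gamma}$ with $\delta<\gamma<1$.
\item For small $\theta$ this gives $z_\theta\in C^{1,\alpha}(\overline{\Om_\eta})$ with $z_\theta\ge c\,d$. The tool is the implicit function theorem in $C_d$ around the solution at $\theta=0$, which lies in the interior of the positive cone by the Hopf lemma for H\"older divergence-form operators.
\item It extends $z_\theta$ to $\tilde w\in C^{1,\alpha}(\sr^N)$ with $\tilde w\le 0$ outside $\Om_\eta$.
\end{itemize}
Now $-\mathrm{div}(A\nabla\tilde w)$ is known exactly. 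The term $-\theta\tilde w^{-\gamma}\approx -d^{-\gamma}$ dominates $k\,d^{-1-\delta}\tilde w\approx d^{-\delta}$, and the nonlocal term is bounded. Hence $\epsilon\tilde w$ is a subsolution in a thinner strip $\Om_{\eta_0}$.

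\textbf{Gap 2: the barrier has the wrong sign.} Even where $-\Delta v$ is computable ($p=q=2$), your barrier fails. $v=d-\frac1\gamma d^{1+\gamma}$ is concave in the normal direction, so $-\Delta v\approx(1+\gamma)d^{\gamma-1}>0$ and $-\Delta v+c\,v>0$ near $\pa\Om$. That makes $v$ a supersolution, which cannot give $w\ge\epsilon v$.

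For $-\Delta v+\s v+c\,v\le 0$ you need $-\Delta v\lesssim -d^{-\delta}$, i.e.\ a convex correction. Take for example $v=d+d^{1+\gamma}$ with $\gamma<1-\delta$, extended through the signed distance so that $v\le 0$ outside $\Om$. With this fix the linear case works for every $s\in(0,1)$, since $|\s v|=O(1+|\log d|+d^{1+\gamma-2s})=o(d^{\gamma-1})$. That would be a self-contained alternative to the citation of \cite{pramanik2024mixed}.

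\textbf{Gap 3: the interior argument for (a).} You evaluate the equations at a touching point. But $u_1,u_2$ are only $C^{1,\alpha}$ and the equations hold only weakly, so neither $\Delta w$ nor $\p u_i$ has a pointwise value there. In case (ii), moreover, $A$ degenerates ($p>2$) or blows up ($p<2$) at critical points of $u_1,u_2$. That is precisely where the strong comparison principle for the pure $p$-Laplacian can fail.

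The mechanism you identify, a strictly negative nonlocal contribution at a contact point, is the right one. It must, however, be run in weak form:
\begin{itemize}
\item in case (ii), via a Jarohs-type test-function argument valid for $s<1/q'$, as in \cite{dhanya2026regularity};
\item in the linear case, via a strong maximum principle for weak supersolutions.
\end{itemize}
These are exactly the results the paper imports.
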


For local elliptic operators, both the SCP and Hopf-type SCP are well studied in the literature. In contrast, corresponding results for nonlocal and mixed local–nonlocal operators are more limited, particularly in the presence of singular nonlinearities. We therefore briefly review existing results on the SCP and Hopf-type SCP.

\begin{itemize}
    \item \textbf{Local operators.} In the case of the Laplacian,\cite{dhanya2015three} established the SCP and then the Hopf-type SCP by deriving a pointwise lower bound for $u_2-u_1$ in terms of the first eigenfunction of the Laplacian and a suitable power of it. For the $p$-Laplace operator, \cite{giacomoni2007sobolev} proved the SCP and its Hopf-type version under a stricter condition $f_1 < f_2$ in $\Om.$ Later, \cite{indulekha2023scp} showed that for $p>2$, relaxing this to $f_1 \leq f_2$, $f_1 \not\equiv f_2$, may lead to the failure of the SCP, indicating the degeneracy of the $p$-Laplacian for $p>2$ and the need for a strict inequality.

    \item \textbf{Nonlocal operators.} For nonlocal operators, weaker regularity necessitates a Hopf-type SCP formulated through boundary distance estimates of the form $u_2-u_1 \geq c d(x)^s$ in $\Om$, where $d(x)$ is the distance function and $c>0$. For the fractional Laplacian, although not stated explicitly, Theorem 4.7 of \cite{giacomoni2019existence} effectively proves both the SCP and the fractional Hopf-type SCP. However, to the best of our knowledge, the corresponding nonlinear result for the fractional $p$-Laplacian remains open, where one expects a similar boundary estimate.

    \item \textbf{Mixed local-nonlocal operators.} In this setting, \cite{dhanya2026regularity} extended the ideas of \cite{jarohs2018strong} to prove the SCP for singular problems, however, under a technical restriction $s\in \left(0, \frac{1}{q'} \right)$, $q'$ being the H\"older conjugate of $q$. For the mixed linear operator $\mathcal{L} = -\De + (-\De)^s$, \cite{pramanik2024mixed} employed a weak Harnack-type inequality followed by a construction-based approach and established both SCP and Hopf-type SCP for the full range $s\in(0,1)$. Nevertheless, the method strongly relies on the linearity of the operator, thus lacking a direct extension for nonlinear operators.
\end{itemize}

In summary, for mixed operators with singular nonlinearity, the SCP is known in the nonlinear case only when $s\in \left(0, \frac{1}{q'} \right)$, while the Hopf-type SCP is available solely for linear operators. This gap highlights both the technical difficulties and the novelty of our result. Our proof stems from the observation that a positive solution to a carefully designed infinite semipositone problem serves as a subsolution to a linearized version of the original problem. The resulting divergence form operator involves only a H\"older continuous coefficient matrix which lacks an explicit expression, rendering standard sub-supersolution methods ineffective. Nevertheless, invoking the implicit function theorem, we obtain a positive solution to the associated semipositone problem involving the linearized local operator and combine it with refined estimates on the nonlocal term to close the argument.

\subsection{Sobolev versus H\"older local minimizers}

Next, we focus on the second key ingredient towards the multiplicity result, namely a Sobolev versus H\"older local minimizer result tailored to mixed local-nonlocal operators. In the seminal work of Brezis-Nirenberg \cite{brezis1993minimizer}, they considered critical growth functionals $\Phi: H^1_0 (\Om) \rar \sr$ satisfying a certain structure and established that if $u_0$ is a local minimizer of $\Phi$ in $C^1_0$-topology, then $u_0$ remains a local minimizer of $\Phi$ in the $H^1_0$-topology as well. This result was subsequently extended to the quasilinear settings by \cite{garcia2000minimizer, guo2003minimizer} and \cite{brock2008multiplicity}, and later generalized to include singular nonlinearities by Giacomoni et al. \cite{giacomoni2007sobolev}. In the nonlocal framework, Iannizzotto and his collaborators established the analogous results, see \cite{iannizzotto2015h, iannizzotto2020minimizer} for instance.

In this paper, we extend this line of research to a mixed local-nonlocal setting in the presence of a singular term. We consider the functional $I: \xo(\Om) \rar \sr$ given by
\begin{equation*}
    I(u):=\frac{1}{p} \int_{\Om} |\na u|^p\ dx + \frac{1}{q} \int_{\sr^N \times \sr^N} \frac{|u(x)- u(y)|^q}{|x-y|^{N+qs}}\ dx\ dy - \frac{1}{1-\delta} \int_{\Om} \left(u^+ \right)^{1-\delta} dx - \int_{\Om} F(x, u^+)\ dx,
\end{equation*}
where $0<\delta <1$, $\displaystyle u^+ := \max \{u, 0\}$ and $\displaystyle F(x,t):= \int_0^t f(x,\tau)\ d\tau$. The prototype of the function $f$ is $f(x,t)= t^r,\ t\geq 0$, where $r$ is a positive number between $\max \{ p-1, q-1\} < r \leq \max \{p^* -1, q_s^* -1 \}$. The precise assumptions on $f$ are outlined in Section \ref{section_minimizer}, under which in force, we prove the following theorem.

\begin{theorem}\label{Theorem_minimizer}
    Let $1<p,q<\infty$, $s\in (0,1)$ with $p>sq$ and $0<\delta<1$. Suppose $f$ satisfies \textit{(f1)}, \textit{(f2)} and $I: \xo(\Om) \rar \sr$ is defined as above. Let $u_0 \in C^1_0 \omc$ satisfy
    \[u_0 (x) \geq k_0 d(x) \text{ in } \Om, \text{ for some } k_0 >0.\]
    If $u_0$ is a local minimizer of $I$ in $C^1_0 \omc$-topology, then $u_0$ is a local minimizer of $I$ in $\xo (\Om)$-topology as well.
\end{theorem}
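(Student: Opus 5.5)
We argue by contradiction, following the Brezis--Nirenberg scheme as adapted to singular problems by Giacomoni et al.\ \cite{giacomoni2007sobolev} and to nonlocal operators in \cite{iannizzotto2015h}. Suppose $u_0$ is not a local minimizer of $I$ in $\xo(\Om)$. Then for every $\epsilon>0$ the set $\{v\in\xo(\Om): \|v-u_0\|_{\xo(\Om)}\le \epsilon\}$ contains a point where $I$ is smaller than $I(u_0)$.

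Our first step is to obtain constrained minimizers. When $f$ is subcritical, we minimize $I$ over the closed ball $\overline{B}_\epsilon(u_0)$. When $f$ is critical we do not minimize $I$ directly. Instead we use the truncated functional obtained by cutting $f(x,t)$ at level $t=j$, and we pass $j\to\infty$ at the end. This is possible because the $L^\infty$ bounds obtained below are uniform in $j$.

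The ball is weakly closed and convex. The principal part of $I$ is weakly lower semicontinuous. The singular term $\int (u^+)^{1-\delta}$ is weakly continuous by compactness. So a minimizer $v_\epsilon$ exists with $I(v_\epsilon)<I(u_0)$. By the Lagrange multiplier rule (the constraint is the norm, and it suffices to use the $W^{1,p}_0$ and $W^{s,q}_0$ pieces), $v_\epsilon$ satisfies in the weak sense
\[
-\p v_\epsilon+\qs v_\epsilon-\frac{1}{(v_\epsilon^+)^{\delta}}-f(x,v_\epsilon^+)=\mu_\epsilon\big(\text{duality map of } v_\epsilon-u_0\big),\qquad \mu_\epsilon\le 0 .
\]
This equation is understood against test functions supported where $v_\epsilon>0$; we justify the singular term using the lower bound discussed next. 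Equivalently, $v_\epsilon$ solves an equation of the form $(1-\mu_\epsilon)(-\p+\qs)v_\epsilon+\mu_\epsilon(-\p+\qs)u_0=\dots$, up to the nonhomogeneous structure. Since $u_0$ is a $C^1_0$ local minimizer, it is itself a solution, with $u_0\ge k_0d$.

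The second step is a pair of estimates on $v_\epsilon$ that are uniform in $\epsilon$ (and in $j$). The first is a uniform $L^\infty$ bound, proved by a Moser or De Giorgi iteration for the mixed operator. Here we use that $\mu_\epsilon\le0$ gives a favorable sign, and that $\|v_\epsilon-u_0\|\to0$ provides equi-integrability of $f(x,v_\epsilon)$ in the critical regime. The second is a lower bound $v_\epsilon\ge c\,d(x)$, obtained by comparing with $\underline{u}=\eta\, d$, or with the solution of $-\p w+\qs w=w^{-\delta}$ truncated, which is a subsolution. Weak comparison applies because the singular term is decreasing. With these estimates the right-hand side is bounded by $C d^{-\delta}$ with $\delta<1$.

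The third step is the key one: a uniform $C^{1,\alpha}(\overline{\Om})$ estimate for $v_\epsilon$, using regularity theory for mixed $(p,q)$ problems with right-hand side $\lesssim d^{-\delta}$. This is the point where the nonhomogeneous perturbation by $\mu_\epsilon$ must be absorbed. I would handle it by splitting into two cases. If $\mu_\epsilon\to 0$, or more generally if $\mu_\epsilon$ stays bounded, the equation has uniformly controlled structure. If $|\mu_\epsilon|\to\infty$, dividing by $|\mu_\epsilon|$ shows that the operator is essentially $(-\p+\qs)v_\epsilon-(-\p+\qs)u_0$ with small data. In both cases we expect a boundary regularity estimate, such as a regularity result of Lieberman type for the local part, with the nonlocal term treated as a bounded perturbation because $p>sq$, to yield $\|v_\epsilon\|_{C^{1,\alpha}}\le C$. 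I expect this step to be the main obstacle: the singular term, the nonlocal tail, and the multiplier all have to be controlled uniformly at the same time.

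Finally, Arzel\`a--Ascoli gives $v_\epsilon\to u_0$ in $C^1_0\omc$. The convergence holds first in $\xo(\Om)$, and the limit is identified through the compact embedding. Then for small $\epsilon$ we have $I(v_\epsilon)<I(u_0)$ with $v_\epsilon$ in any $C^1_0$ neighborhood of $u_0$, contradicting the $C^1_0$-minimality of $u_0$.
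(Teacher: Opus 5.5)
Your overall architecture (argue by contradiction, take constrained minimizers $v_\epsilon$, prove uniform $C^{1,\alpha}$ bounds, conclude with Arzel\`a--Ascoli) is the right one. The gap is in the choice of constraint, the ball in the $\xo(\Om)$-norm. It leaves the step you flag unproved, and it also undermines your other two estimates.

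With a norm constraint, the Lagrange multiplier term is $\mu_\epsilon$ times a nonlinear \emph{differential} operator applied to $v_\epsilon-u_0$. For example, it has the form $\mu_\epsilon\bigl(a_\epsilon(-\p)(v_\epsilon-u_0)+b_\epsilon\qs(v_\epsilon-u_0)\bigr)$ with unrelated normalizations $a_\epsilon=\|v_\epsilon-u_0\|_{W^{1,p}_0}^{1-p}$ and $b_\epsilon=[v_\epsilon-u_0]_{s,q}^{1-q}$. Two of your claims hold only when the operator is linear and the constraint is its quadratic form ($p=q=2$): the rewriting as $(1-\mu_\epsilon)(-\p+\qs)v_\epsilon+\mu_\epsilon(-\p+\qs)u_0$, and the claim that dividing by $|\mu_\epsilon|$ leaves ``$(-\p+\qs)v_\epsilon-(-\p+\qs)u_0$ with small data''. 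In general $-\p(v-u_0)\neq-\p v+\p u_0$.

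So $v_\epsilon$ solves an $x$-dependent, nonhomogeneous mixed equation with possibly unbounded and mismatched weights. No uniform $C^{1,\alpha}$ theory is available for it: the regularity results the paper relies on concern $-\p+\qs$ itself with right-hand side $\lesssim d^{-\delta}$. Writing ``we expect a Lieberman-type estimate'' does not supply it. Already for the $p$-Laplacian, the norm-ball route needs a separate regularity theory for the perturbed operator.

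The same multiplier term breaks your other steps.
\begin{itemize}
\item In a Moser iteration, $\int_\Om|\nabla(v_\epsilon-u_0)|^{p-2}\nabla(v_\epsilon-u_0)\cdot\nabla\phi(v_\epsilon)\,dx$ has no sign, so $\mu_\epsilon\le0$ gives no favorable sign.
\item $v_\epsilon$ is not known to be a supersolution of the purely singular problem, and $\ul$ is not known to be a subsolution of the perturbed one. So weak comparison does not yield $v_\epsilon\ge\ul$.
\item You use that lower bound to justify the singular term in the Euler--Lagrange equation, while deriving the lower bound from that same equation. This is circular.
\end{itemize}

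The missing idea is the constraint of \cite{brock2008multiplicity} used in the paper: $K(u)=\frac{1}{l+1}\int_\Om|u-u_0|^{l+1}\,dx\le\varepsilon$ with $l\in(r,r^*)$. In the critical case one takes $l=r^*$ together with the truncations $f_j$. This constraint fixes all three problems.
\begin{itemize}
\item Since $\ul\le u_0$, one has $|\max(u_\varepsilon,\ul)-u_0|\le|u_\varepsilon-u_0|$ pointwise. The constraint set is therefore invariant under $u\mapsto\max(u,\ul)$, and minimality alone gives $u_\varepsilon\ge\ul$ before any Euler--Lagrange equation is written. This invariance fails for your norm ball, since $\nabla(\ul-u_0)$ is not controlled by $\nabla(u_\varepsilon-u_0)$ on $\{u_\varepsilon<\ul\}$.
\item The multiplier term $\mu_\varepsilon|u_\varepsilon-u_0|^{l-1}(u_\varepsilon-u_0)$ is of zeroth order. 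The principal operator stays $-\p+\qs$, and $\mu_\varepsilon\le0$ genuinely makes the right-hand side negative where $u_\varepsilon$ is large. This gives the $L^\infty$ bound, via Lemma \ref{regularity_lemma} and Lemma \ref{l_infinity} when $\mu_\varepsilon$ stays bounded.
\item Subtract the equations of $u_\varepsilon$ and $u_0$, test with $|u_\varepsilon-u_0|^{\kappa-1}(u_\varepsilon-u_0)$, and let $\kappa\to\infty$. This bounds $-\mu_\varepsilon\|u_\varepsilon-u_0\|_\infty^{l}$ uniformly, so the right-hand side is $\le Cd^{-\delta}$ and the existing global $C^{1,\alpha}$ theory applies verbatim.
\end{itemize}
To keep your route, you would have to prove a uniform $C^{1,\alpha}$ estimate for the perturbed mixed operator. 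That is a nontrivial regularity result not currently available in the literature the paper relies on.
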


We prove the result via a constrained minimization approach, inspired by \cite{brock2008multiplicity}, combined with uniform $C^{1,\alpha}$-estimates. A key step in the process is to derive a uniform $L^{\infty}(\Om)$-bound, which is essential for subsequently obtaining the uniform $C^{1,\alpha}$-bound. In a recent work \cite{dhanya2026regularity} (Theorem 15), such an $L^{\infty}$-estimate is derived in the subcritical case $r < \min \{ p^* -1, q_s^* -1 \}$ through a De Giorgi-Stampacchia iteration. While this approach extends naturally to the wider range $r < \max \{ p^* -1, q_s^* -1 \}$, we refine it further in lemma \ref{l_infinity} with the help of Moser iteration method and include the critical case $r= \max \{ p^* -1, q_s^* -1 \}$. This leads to a uniform $L^{\infty}(\Om)$-bound valid over an expanded range of $r$, thereby extending the current literature.

Regarding the structure of the paper, in Section \ref{section_Hopf_type_SCP}, we establish the SCP and the Hopf-type SCP for mixed operators. Section \ref{section_minimizer} addresses the Sobolev versus H\"older local minimizer result of Theorem \ref{Theorem_minimizer}. Finally, Section \ref{section_first_solution} contains the proof of Theorem \ref{Theorem_first_solution}, establishing the existence and non-existence of solutions, and Section \ref{section_second_solution} concludes with the multiplicity results stated in Theorem \ref{Theorem_second_solution_subcritical} and Theorem \ref{Theorem_second_solution_critical}. Lastly, we use the following notations throughout the paper.
\vspace{5pt}

\textbf{Notations.}

\begin{itemize}

    \item We denote by $\Om$ a smooth, bounded domain in $\sr^N$ and $\nu = \nu (x_0)$ denotes the outward unit normal to $\pa\Om$ at $x_0 \in \pa\Om$.
    
    \item For any point $x\in\Om$, $d(x)$ denotes the distance of the point $x$ from $\pa\Om$, defined as
    \[d(x)= dist (x, \pa\Om) = \inf_{y\in\pa\Om}\ |y-x|.\]

    \item For $\eta>0$, we define an $\eta$-neighbourhood of the boundary $\pa\Om$ as
    \[\Om_{\eta} = \left\{ x\in \Om: d(x) < \eta \right\}.\]

    \item For two functions $f_1, f_2 \in C\omc$, we write $f_1\sim f_2$ to express that there exist $c_1, c_2 >0$ such that $c_1 f_2(x) \leq f_1(x) \leq c_2 f_2(x)$ for all $x\in\Om$.

    \item We also adopt the notation $\mathcal{G}u(x,y) = |u(x)-u(y)|^{q-2} \left( u(x) - u(y) \right)$.
\end{itemize}


\section{Comparison principle}\label{section_Hopf_type_SCP}

In this section, we establish the comparison principle stated in Theorem \ref{Theorem_SCP}. We begin with part $(a)$ of Theorem \ref{Theorem_SCP}, that is, the Strong Comparison Principle.

\begin{theorem}[Strong Comparison Principle]\label{Theorem_only_scp}
    Under the assumptions of Theorem \ref{Theorem_SCP}, we have $u_1<u_2$ in $\Om$.
\end{theorem}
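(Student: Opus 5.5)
First I will establish the weak comparison $u_1\le u_2$ in $\Om$. I test the difference of the two equations in \eqref{scp equations} with $\varphi=(u_1-u_2)^+$. This is admissible: both functions are in $C^{1,\alpha}(\overline{\Om})$ and vanish outside $\Om$, so $\varphi$ is Lipschitz with compact support in $\overline\Om$, and it can be approximated by $\test$ functions; the singular terms are integrable against it because $u_i\ge c\,d(x)$ near the boundary by Hopf for each equation and $\delta<1$. The local part is nonnegative by monotonicity of $\xi\mapsto|\xi|^{p-2}\xi$, and the nonlocal part is nonnegative by the standard inequality $(\mathcal{G}u_1-\mathcal{G}u_2)(x,y)\,(\varphi(x)-\varphi(y))\ge0$. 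The singular term $-(u_1^{-\delta}-u_2^{-\delta})\varphi\ge0$ on $\{u_1>u_2\}$, and the right side $\int(f_1-f_2)\varphi\le0$. Hence every nonnegative term vanishes, so $\nabla\varphi=0$ and $\varphi\equiv0$.

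Next I argue by contradiction for strictness. Suppose $u_1(x_0)=u_2(x_0)$ for some $x_0\in\Om$. I set $w=u_2-u_1\ge0$ and linearize the local part. Writing $|\nabla u_2|^{p-2}\nabla u_2-|\nabla u_1|^{p-2}\nabla u_1=A(x)\nabla w$ with
\[A(x)=\int_0^1 |\nabla u_t|^{p-2}\Big(I+(p-2)\tfrac{\nabla u_t\otimes\nabla u_t}{|\nabla u_t|^2}\Big)\,dt,\qquad u_t=u_1+t w,\]
I get a matrix that is Hölder continuous and uniformly elliptic on compact sets where $\nabla u_1,\nabla u_2$ do not both vanish. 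Near critical points the degeneracy is handled as in the $p$-Laplacian SCP literature. The singular term is handled similarly: $u_1^{-\delta}-u_2^{-\delta}=c(x)w$ with $c\ge0$ bounded on compact subsets of $\Om$, so it contributes $-c(x)w$ with a favourable sign. This gives
\[-\mathrm{div}(A\nabla w)+\big[\qs u_2-\qs u_1\big]+c(x)w=f_2-f_1\ge0.\]

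For the nonlocal part at the touching point, $u_2(x_0)-u_1(x_0)=0$ and $w\ge0$ give $u_2(x_0)-u_2(y)\le u_1(x_0)-u_1(y)$, so pointwise $\qs u_2(x_0)-\qs u_1(x_0)\le0$, with strict inequality unless $w\equiv0$. To use this in a local argument, I follow \cite{jarohs2018strong} and \cite{dhanya2026regularity}. On a small ball $B$ around a point of $\{w=0\}$ I estimate the difference of the nonlocal operators in weak form by a local term plus a tail. The local term is controlled through the mean value theorem: for $q\ge2$, $|\mathcal{G}u_2-\mathcal{G}u_1|\lesssim|u_1(x)-u_1(y)|^{q-2}|w(x)-w(y)|$ plus lower order. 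The $C^{1}$ regularity gives $|u_i(x)-u_i(y)|\le C|x-y|$, and the kernel $|x-y|^{(q-2)-N-sq}$ times $|w(x)-w(y)|\le C|x-y|$ is integrable exactly when $q-1-sq>0$, i.e.\ $s<1/q'$. So in case (ii) the nonlocal difference is a bounded perturbation of the divergence-form operator. In case (i) it is linear, namely $(-\Delta)^s w$, and no restriction on $s$ is needed. Either way $w$ becomes a nonnegative supersolution of a locally uniformly elliptic linear equation plus a lower-order term. The weak Harnack inequality, or the strong maximum principle for supersolutions, then forces $w\equiv0$ on the connected component through $x_0$, hence on $\Om$.

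Finally, $w\equiv0$ gives $f_1\equiv f_2$, contradicting $f_1\not\equiv f_2$. I expect the main obstacle to be the degeneracy of $A$ near critical points of $u_i$ when $p\ne2$. I would first try to avoid it by choosing the touching point suitably. If $\nabla u_1(x_0)=\nabla u_2(x_0)=0$ on a whole region, I would instead exploit directly the strict sign of the nonlocal difference at $x_0$, since $w\ge0$ and $w\not\equiv0$ make $\qs u_2(x_0)<\qs u_1(x_0)$ strictly, and contradict this with the pointwise equations. The pointwise equations hold because $f_i$ are continuous and the $C^{1,\alpha}$ regularity with $s<1/q'$ lets $\qs u_i$ be evaluated pointwise.
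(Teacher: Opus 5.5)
Your weak comparison step is correct. The contradiction argument also closes in the non-degenerate situations. In case (i) you should apply the weak Harnack inequality for the mixed operator itself to $w\ge 0$, as \cite{pramanik2024mixed} does; for $s\ge 1/2$ you cannot treat $(-\Delta)^s w$ as a bounded term. In case (ii) the argument works when $p=2$, or when the contact point satisfies $\nabla u_1(x_0)=\nabla u_2(x_0)\neq 0$. There, however, ``bounded perturbation'' is not what you need, since a bounded term of unknown sign does not make $w$ a supersolution. What you need is that the nonlocal difference is continuous (true for Lipschitz $u_i$ when $s<1/q'$) and strictly negative at $x_0$, hence negative on a small ball. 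Then $-\mathrm{div}(A\nabla w)+c(x)w>0$ there with $A$ uniformly elliptic, and the classical strong maximum principle gives a local contradiction, with no need to propagate along connected components.

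The genuine gap is case (ii) with $p\neq 2$ when the contact point is a critical point. Since $w$ attains its minimum $0$ at $x_0$, we have $\nabla u_1(x_0)=\nabla u_2(x_0)$. If this common gradient vanishes, then $A(x_0)=0$ for $p>2$, and $A$ is unbounded near $x_0$ for $p<2$, so neither weak Harnack nor the classical SMP applies. You cannot avoid this by choosing the contact point: a priori the contact set may consist only of critical points, for example a single common interior maximum of $u_1$ and $u_2$. Your fallback of comparing ``pointwise equations'' at $x_0$ fails, because for $u_i\in C^{1,\alpha}(\overline{\Om})$ the term $-\Delta_p u_i$ has no pointwise meaning; such solutions are in general not $C^2$ near critical points. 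Comparing $\Delta_p u_1$ with $\Delta_p u_2$ at a degenerate contact point is exactly the hard step.

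What the strict sign of the nonlocal term actually yields, via continuity, is $-\Delta_p u_2+\Delta_p u_1\ge \eta>0$ weakly on a small ball around $x_0$, with $u_2\ge u_1$ and equality at $x_0$. To reach a contradiction from this you still need an interior strict comparison principle for the degenerate $p$-Laplacian, or a Jarohs-type argument in the spirit of \cite{jarohs2018strong}; your proposal supplies neither. This is precisely the part the paper does not reprove but imports from Theorem 5 of \cite{dhanya2026regularity}, observing only that strictness there comes from the nonlocal term, so $f_1\le f_2$ suffices. Note that for the purely local $p$-Laplacian with $p>2$ the strong comparison principle genuinely fails under $f_1\le f_2$, $f_1\not\equiv f_2$ \cite{indulekha2023scp}. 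So at degenerate contact points the nonlocal term must be used in an essential way, and your sketch identifies this but does not carry it out.
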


\begin{proof}
    In the linear case, that is, when $p=q=2$ and $s\in (0,1)$, the result follows from Theorem 3 of \cite{pramanik2024mixed}. In the nonlinear setting $\left( p>1,\ q\geq 2,\ s\in \left( 0, \frac{1}{q'} \right) \right)$, the conclusion is a consequence of Theorem 5, \cite{dhanya2026regularity}. Although in \cite{dhanya2026regularity}, the result is stated under a stricter assumption $f_1 < f_2$ in $\Om$, a careful inspection of the proof shows that the argument remains valid even under the weaker condition $f_1 \leq f_2$ in $\Om$.
\end{proof}

Next, we turn to part $(b)$ of Theorem \ref{Theorem_SCP}. Before proceeding to the proof, we state two auxiliary results. The first concerns the existence of a positive solution to an infinite semipositone problem. The second one deals with the extension of a H\"older continuous function defined on a bounded domain to the whole space $\sr^N$, in such a way that the extended function possesses certain desirable regularity and sign properties. Although a result similar to the latter one is proved in lemma 6.2 of \cite{pramanik2024mixed} for a specific function, the argument naturally extends to a broader class of functions, as presented in lemma \ref{C_1_alpha extension} below.

\begin{proposition}\label{infinite_semipositone_existence}
    Let $U$ be a smooth, bounded domain in $\sr^N$ and consider the following problem
    \begin{equation}\tag{$Q_{\theta}$}\label{Q theta}
    \begin{aligned}
        -div \left(A_0 (x) \na z \right) &= z^{\si} - \frac{\theta}{z^{\gamma}} \text{ in } U\\
        z > 0 \text{ in } U,\ z &= 0 \text{ on } \pa U
    \end{aligned}
    \end{equation}
    where $\sigma, \gamma \in (0,1)$ and $\theta>0.$ We assume $A_0 (x) =\left( a_{ij}^{(0)} (x) \right)_{N\times N}$ is a symmetric and positive definite matrix on $U$ with coefficients $a_{ij}^{(0)}$ $(i,j = 1,2, \dots, N)$ H\"older continuous on $\overline{U}$. Then, for $\theta$ sufficiently small, \eqref{Q theta} admits a positive solution $z_{\theta} \in \coa$, for some $\alpha\in (0,1)$. Moreover, $z_{\theta} \geq c_{\theta} d(x)$, for some $c_{\theta} >0$.
\end{proposition}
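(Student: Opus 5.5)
The strategy is a perturbation argument in $\theta$, based on the implicit function theorem around the $\theta=0$ problem, as the introduction indicates.

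\textbf{The case $\theta=0$.} The problem
\[-div(A_0\na z)=z^{\si}, \quad z>0 \text{ in } U,\quad z=0 \text{ on } \pa U,\]
is sublinear. It has a unique positive solution $z_0$, obtained for instance by minimizing $\frac12\int_U A_0\na z\cdot\na z-\frac{1}{1+\si}\int_U (z^+)^{1+\si}$, with uniqueness in the Brezis--Oswald sense. Schauder/$C^{1,\alpha}$ theory for divergence form operators with H\"older coefficients gives $z_0\in C^{1,\alpha}(\overline U)$. The Hopf lemma then gives $z_0\ge c_0 d(x)$, and in fact $z_0\sim d$.

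\textbf{Functional setting.} I work with the weighted variable $z=z_0+ w$, with $w$ in a space such as $Y=\{w\in C^{1,\beta}_0(\overline U)\}$. I also want an operator with a uniform inverse, so the singular data must be handled. The key point is that for $z\sim d$ the term $\theta z^{-\gamma}\sim \theta d^{-\gamma}$ with $\gamma<1$. Such a right-hand side still yields $C^{1,\beta}$ solutions for some $\beta=\beta(\gamma)$, by Hardy-type/Lazer--McKenna estimates for divergence form operators.

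Accordingly, let $Z$ be the Banach space of functions $h$ with $\|h d^{\gamma}\|_{L^\infty}<\infty$ (possibly with a H\"older weight). Denote by $K=(-div(A_0\na\cdot))^{-1}$ the solution operator; it is bounded and continuous $Z\to C^{1,\beta}_0(\overline U)$. Define
\[\Phi(\theta,w)=w-K\big((z_0+w)^{\si}-\theta (z_0+w)^{-\gamma}\big)+K(z_0^\si)\]
on $\sr\times B$, where $B$ is a small ball in $C^{1,\beta}_0$. For $\|w\|_{C^1}$ small relative to $c_0$, $z_0+w\ge \frac{c_0}{2}d$, since $|w|\le \|w\|_{C^1}d$. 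Hence $z\mapsto z^\si$ and $z\mapsto z^{-\gamma}$ are $C^1$ from $B$ into $Z$. Here $z^{\si-1}w$ and $z^{-\gamma-1}w$ are bounded by $d^{\si}$ and $d^{-\gamma}$ respectively, using $|w|\lesssim d$. Thus $\Phi$ is $C^1$ and $\Phi(0,0)=0$.

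\textbf{Invertibility of the linearization.} We have $D_w\Phi(0,0)=I-\si K(z_0^{\si-1}\cdot)$. Suppose $v$ solves $-div(A_0\na v)=\si z_0^{\si-1}v$. Testing the equation of $z_0$ against $v$ and vice versa gives
\[\int \si z_0^{\si-1}v z_0=\int z_0^\si v,\]
that is, $(\si-1)\int z_0^\si v=0$. This alone does not conclude, so I would use the standard sublinear nondegeneracy argument instead. The weight $\si z_0^{\si-1}$ has first eigenvalue strictly greater than $1$, since $z_0$ is a positive supersolution: $-div(A_0\na z_0)=z_0^{\si}>\si z_0^{\si-1}z_0$, and Picone/Allegretto--Huang then gives $\mu_1>1$. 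Hence the kernel is trivial. The operator is a compact perturbation of the identity, since $K(z_0^{\si-1}\cdot)$ maps $C^1_0$ compactly into $C^{1,\beta}$, where $z_0^{\si-1}w\sim d^{\si}$. By Fredholm theory it is invertible.

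\textbf{Conclusion.} The implicit function theorem gives, for $|\theta|$ small, $w_\theta$ with $w_\theta\to0$ in $C^{1,\beta}$. Then $z_\theta=z_0+w_\theta\ge \frac{c_0}{2}d$ is a positive $C^{1,\alpha}$ solution of \eqref{Q theta}, which gives $c_\theta=c_0/2$.

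\textbf{Main obstacle.} The delicate step is the mapping property of $K$ from the weighted space $Z$ (singular data $\sim d^{-\gamma}$) into $C^{1,\beta}$ with merely H\"older coefficients, together with the resulting continuity and compactness. I would handle it by flattening the boundary and using barrier/Campanato estimates as in Lazer--McKenna and Giacomoni--Schindler--Tak\'a\v{c}-type $C^{1,\beta}$ bounds for data bounded by $d^{-\gamma}$, $\gamma<1$.
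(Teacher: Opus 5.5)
Your proposal is correct and follows essentially the paper's route. Both arguments apply the implicit function theorem at the positive solution $z_0$ of the sublinear problem with $\theta=0$. The linearization $\mathrm{Id}-\sigma\mathcal{L}_0^{-1}(z_0^{\sigma-1}\,\cdot\,)$ is shown to be invertible as an injective compact perturbation of the identity, using the same $C^{1,\alpha}$ estimate for data bounded by $c\,d(x)^{-\gamma}$.

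The differences are minor. The paper works in the weighted space $C_d(U)$, where $z_\theta\in int\ C_d^+(U)$ yields $z_\theta\ge c_\theta d$ and regularity is recovered afterwards. It obtains injectivity by asserting $\Lambda_1>0$ as "standard", whereas your Picone argument with the strict supersolution $z_0$ proves it explicitly ($\mu_1\ge 1/\sigma>1$).
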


We defer the proof of this lemma to the appendix (see appendix \ref{appendix_infinite_semipositone}). For the next auxiliary result, we give a brief sketch of the proof.

\begin{lemma}\label{C_1_alpha extension}
    Let $U$ be a smooth, bounded domain in $\sr^N$ and $z\in C^{1,\alpha}(\overline{U})$ satisfy $z>0$ in $U$, $z=0$ on $\pa U$ and $\frac{\pa z}{\pa \nu} < 0$ on $\pa U$. Then there exists an extension $\tilde{z}$ of $z$ such that $\tilde{z} \in C^{1, \alpha} (\sr^N)$, $\tilde{z} =z$ in $\overline{U}$ and $\tilde{z} \leq 0$ in $\sr^N \setminus U$.
\end{lemma}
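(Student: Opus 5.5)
We construct the extension in two pieces: a collar near $\pa U$ built from the signed distance function, and a global piece obtained by Whitney-type extension. The constant $\alpha$ is the one in the hypothesis throughout.

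\textbf{Step 1: preliminary extension.} Since $\pa U$ is smooth, there is $\eta_0>0$ such that the signed distance $\rho(x)$ (positive in $U$, negative outside) is smooth in the tubular neighbourhood $T_{\eta_0}=\{|\rho|<\eta_0\}$. Each $x\in T_{\eta_0}$ has a unique nearest point $\pi(x)\in\pa U$.

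Extend $z$ to some $z_1\in C^{1,\alpha}(\sr^N)$, using a standard extension operator for $C^{1,\alpha}(\overline U)$ on smooth domains (e.g. reflection across $\pa U$ in boundary-flattening charts, patched by a partition of unity). Then $\na z_1$ is $\alpha$-Hölder on $\sr^N$. On $\pa U$ we have $\na z_1=\frac{\pa z}{\pa\nu}\nu$, because $z=0$ there forces the tangential derivatives to vanish. By compactness of $\pa U$, there is $m>0$ with $\frac{\pa z}{\pa\nu}\le -m$ on $\pa U$.

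\textbf{Step 2: sign in a thin collar.} For $x\notin\overline U$ with $d(x)=t<\eta$, write $x=\pi(x)+t\nu(\pi(x))$. Expanding along the normal segment gives
\[
z_1(x)=z_1(\pi(x))+t\,\na z_1(\xi)\cdot\nu=t\Big(\tfrac{\pa z}{\pa\nu}(\pi(x))+O(t^\alpha)\Big)\le -\tfrac{m}{2}t
\]
once $\eta$ is small, using the Hölder continuity of $\na z_1$. So $z_1<0$ on $\{x\notin\overline U,\ 0<d(x)<\eta\}$.

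\textbf{Step 3: globalization.} Away from $\overline U$, $z_1$ may become positive, so we modify it there. Let $\chi\in C^\infty(\sr^N)$ with $0\le\chi\le1$, $\chi=1$ on $\overline U\cup\{d<\eta/2\}$, and $\chi=0$ outside $\overline U\cup\{d<\eta\}$. Define
\[
\tilde z=\chi z_1-(1-\chi).
\]
This is $C^{1,\alpha}$ on $\sr^N$ as a combination of $C^{1,\alpha}$ and smooth functions. It equals $z$ on $\overline U$. Outside $U$:
\begin{itemize}
\item on $\pa U$ it equals $0$;
\item on the collar $\{0<d<\eta\}$ it is a convex combination of the negative numbers $z_1$ and $-1$, hence negative;
\item beyond the collar it equals $-1$.
\end{itemize}
Therefore $\tilde z\le0$ in $\sr^N\setminus U$. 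If a compactly supported extension is preferred, one can instead use $-\psi$ with $\psi\ge0$ a smooth bump equal to $1$ near $\{d\le\eta\}$; the same convex-combination argument applies.

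The main obstacle is Step 2: obtaining a uniform sign on the collar. This needs the quantitative negativity $\frac{\pa z}{\pa\nu}\le -m<0$, which follows from compactness of $\pa U$, together with the $\alpha$-Hölder control of $\na z_1$ that survives the extension in Step 1. The remaining steps are routine.
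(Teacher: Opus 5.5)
Your proof is correct, but it gets the sign condition by a different mechanism than the paper.

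\textbf{The paper's route.} The paper flattens $\partial U$ in local charts and extends $z\circ\Phi^{-1}$ from the upper half-space by \emph{odd} reflection, $\tilde v(x',x_N)=-v(x',-x_N)$ for $x_N<0$. It then patches the pieces with a nonnegative partition of unity whose interior member is supported in $U$. Because $z>0$ in $U$, each reflected piece is automatically nonpositive on the exterior side, so $\tilde z\le 0$ outside $U$ comes for free. The vanishing of $z$ on $\partial U$, and hence of its tangential derivatives, is exactly what makes the odd reflection $C^{1,\alpha}$. The hypothesis $\frac{\partial z}{\partial\nu}<0$ is never used.

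\textbf{Your route.} You start from an arbitrary $C^{1,\alpha}$ extension $z_1$, which carries no sign information. You recover strict negativity in a thin exterior collar from the quantitative bound $\frac{\partial z}{\partial\nu}\le -m$ and the continuity of $\nabla z_1$. You then glue to the constant $-1$ through a cutoff, using that a convex combination of negative numbers is negative.

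\textbf{What each buys.} Your approach treats the extension operator as a black box and gives a little more: $\tilde z<0$ on all of $\mathbb{R}^N\setminus\overline U$, with $\tilde z\le -\frac{m}{2}\,\mathrm{dist}(x,\partial U)$ near $\partial U$. The price is that it genuinely needs the normal-derivative hypothesis. That hypothesis is available in the paper's application, since $z_\theta\ge c_\theta d$ there. The paper's approach is shorter and does not depend on the normal derivative at all.

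Two small remarks. In Step 2, uniform continuity of $\nabla z_1$ near $\partial U$ already suffices, so H\"older continuity is not needed. And if the reflection in your Step 1 is taken to be the odd one, Steps 2--3 become superfluous.
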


\begin{proof}
    We prove this result by the standard method of flattening the boundary. First, we note that any given $C^{1,\alpha}$ function $v$ in the upper half space vanishing on the boundary can be extended as a $C^{1,\alpha}(\mathbb{R}^N)$ function using an odd reflection. For a given function $z$ defined in a bounded domain $U$, we utilize the $C^{1,\alpha}$ regularity of local coordinate charts along with a partition of unity argument to construct a $C^{1,\alpha}$ extension $\tilde{z}$ in $\mathbb{R}^N$. This extension $\tilde{z}$ also satisfies $\tilde{z} \leq 0$ in $\mathbb{R}^N \setminus U$ and ensures all the required properties.
\end{proof}

Now, we proceed to the proof of Theorem \ref{Theorem_SCP}.
\vspace{5pt}

\noindent\textbf{\textit{Proof of Theorem \ref{Theorem_SCP}:}} Set \( w := u_2 - u_1 \). Then part $(a)$ of Theorem \ref{Theorem_SCP}, namely the Strong Comparison Principle, follows directly from Theorem \ref{Theorem_only_scp}. Therefore, it remains to establish part $(b)$ of Theorem \ref{Theorem_SCP}, which corresponds to the Hopf-type Strong Comparison Principle. For clarity and completeness, we divide the proof of the Hopf-type SCP into three cases, according to the values of $p$ and $q$.
\vspace{5pt}

\noindent\textbf{\textit{Case I: Linear case ($p=q=2$).}} The desired result in this case has been proved in Theorem 3 of \cite{pramanik2024mixed} for all $s\in (0,1)$.
\vspace{5pt}

\noindent\textit{\textbf{Case II: Nonlinear case with $p>1$, $q=2$ and $s\in \left( 0, \frac{1}{2} \right)$.}} In this case, the nonlocal component is linear. Since $u_1, u_2$ satisfy \eqref{scp equations}, using the mean value theorem and by linearizing the operator, we obtain that $w$ solves the linear equation
\begin{equation}\label{w_equn_1}
    \begin{aligned}
        -div \left(A(x)\na w \right) + \s w + \frac{\delta}{\xi (x)^{\delta +1}} w = f_2 - f_1 \geq 0 \text{ in } \Om,
    \end{aligned}
\end{equation}
for some function $\xi \in [u_1, u_2]$. The coefficient matrix $A(x) = \left(a_{ij} (x)\right)_{N\times N}$ is given by
\[ a_{ij} (x) = \int_0^1 |\na u_t (x)|^{p-2} \left[ \delta_{ij} + (p-2) \frac{\pa u_t}{\pa x_i} \frac{\pa u_t}{\pa x_j} \frac{1}{|u_t (x)|^2} \right] dt,\ x\in \Om, \]
where $u_t := (1-t) u_1 + t u_2$ and $\delta_{ij}$, $i,j = 1,2, \dots, N$, is the standard Kronecker delta symbol. The derivation of the expression for $a_{ij}(x)$ is standard and can be found in Theorem 2.3 of \cite{giacomoni2007sobolev}.  {Since the outward normal derivatives $\frac{\pa u_1}{\pa\nu}$ and $ \frac{\pa u_2}{\pa\nu}$ are strictly negative on $\pa\Om$ (see Theorem 1.2, \cite{antonini2023global}),} the operator $\mathcal{L}_1 z:= div \left(A(x)\na z \right)$ is uniformly elliptic in $\Om_{\eta} = \left\{ x\in \Om: d(x) < \eta \right\}$, for $\eta >0$ sufficiently small. We fix one such  $\eta$ for the remaining part of the proof. Moreover, since $u_1 (x) \sim d(x)$ in $\Om$, from \eqref{w_equn_1} it follows that for some constant $k>0,$
\begin{equation}\label{w_equn_2}
    -div \left(A(x)\na w \right) + \s w + \frac{k}{d(x)^{\delta +1}} w \geq 0 \text{ in } \Om_{\eta}.
\end{equation}

We now aim to find a function that is positive in $\Om$, exhibits a distance-like behavior near $\pa \Omega$ and satisfies \eqref{w_equn_2} with a reverse inequality. More precisely, we claim the following.

\noindent\textit{\textbf{Claim 1:}} There exist $\epsilon >0$ and a function $\tilde{w} \in C^{1,\alpha} (\sr^N)$ such that $\tilde{w} >0$ in $\Om_{\eta}$, $\tilde{w}=0$ on $\pa\Om$, $\tilde{w} \leq 0$ in $\sr^N \setminus \Om_{\eta}$, $\frac{\pa \tilde{w}}{\pa\nu} < 0$ on $\pa\Om_{\eta}$ and satisfies
\begin{equation}
    \begin{aligned}
        -div \left(A(x)\na \left(\epsilon \tilde{w} \right) \right) + \s \left(\epsilon \tilde{w} \right) + \frac{k}{d(x)^{\delta +1}} \left(\epsilon \tilde{w} \right) \leq 0 \text{ in } \Om_{\eta_0}\\
        \epsilon \tilde{w} \leq w \text{ in } \sr^N \setminus \Om_{\eta_0}
    \end{aligned}
\end{equation}
for some $\eta_0 \in (0, \eta)$.
\vspace{5pt}

To prove this claim, we consider the following infinite semipositone problem:
\begin{equation}\label{v_equn}
    \begin{aligned}
        -div \left(A (x) \na z \right) = z^{\si} &- \frac{\theta}{z^{\gamma}} \text{ in } \Om_{\eta}\\
        z > 0\text{ on } \Om_\eta,\ z &= 0 \text{ on } \pa\Om_{\eta}
    \end{aligned}
\end{equation}
where $0< \si <1$, $\delta < \gamma <1$ and $\theta>0$ is small.
By Proposition \ref{infinite_semipositone_existence}, there exists 
\( z_\theta \in C^{1,\alpha}(\overline{\Omega_\eta}) \) that solves \eqref{v_equn} and satisfies the assumptions of lemma \ref{C_1_alpha extension}.
Hence, applying lemma \ref{C_1_alpha extension}, we can extend \( z_\theta \) to a function \( \tilde{w} \in C^{1,\alpha}(\mathbb{R}^N) \) with $\tilde{w}\leq 0$ in $\mathbb{R}^N\setminus \Om_\eta$. Using the properties of $\tilde{w}$, we have,
\begin{equation}\label{estimate 1}
    -div \left(A(x) \na \tilde{w} \right) + \frac{k}{d(x)^{\delta +1}} \tilde{w} = \tilde{w}^{\si} - \frac{\theta}{\tilde{w}^{\gamma}} + \frac{k \tilde{w}}{d(x)^{\delta +1}} \leq c_1 d(x)^{\si} - \frac{c_2}{d(x)^{\gamma}} + \frac{c_3}{d(x)^{\delta}} \text{ in } \Om_{\eta},
\end{equation}
for some constants $c_1, c_2, c_3 >0$. Now, since $\tilde{w} \in C^{1, \alpha} (\sr^N)$ and $s\in \left( 0, \frac{1}{2} \right) $, by Proposition 2.6 of Silvestre \cite{silvestre2007regularity}, there exists $c_4 >0$ such that $\left|\s \tilde{w}\right| \leq c_4$ in $\Om_{\eta}$. Thus, as $\gamma >\delta$, choosing $\eta_0 \in (0, \eta )$ small enough, from \eqref{estimate 1} we obtain
\begin{equation}\label{v_tilde equn}
    -div \left(A(x) \na \tilde{w} \right) + \s \tilde{w} + \frac{k}{d(x)^{\delta +1}} \tilde{w} \leq 0 \text{ in } \Om_{\eta_0}.
\end{equation}
Next, as $w>0$ in $\Om$ by Theorem \ref{Theorem_only_scp}, we choose $\epsilon >0$ such that $w \geq \epsilon \tilde{w}$ in the annulus $\left\{x\in \Om: \eta_0 \leq d(x) \leq \eta \right\}.$ Also, by construction, $\tilde{w}\leq 0$ in $\sr^N \setminus \Om_\eta$. Therefore, $w \geq \epsilon \tilde{w}$ in $\sr^N \setminus \Om_{\eta_0}$ and this proves our claim.

Combining \eqref{w_equn_2} and \eqref{v_tilde equn}, we thus have
\begin{equation}\label{comparison q=2 case}
    \begin{aligned}
        -div \left(A(x)\na w \right) + \s w + \frac{k}{d(x)^{\delta +1}} w \geq 0 \text{ in } \Om_{\eta_0}\\
        -div \left(A(x)\na \left(\epsilon \tilde{w} \right) \right) + \s \left(\epsilon \tilde{w} \right) + \frac{k}{d(x)^{\delta +1}} \left(\epsilon \tilde{w}\right) \leq 0 \text{ in } \Om_{\eta_0}\\
        w \geq \epsilon \tilde{w} \text{ in } \sr^N \setminus \Om_{\eta_0}.
    \end{aligned}
\end{equation}
By the weak comparison principle, $w \geq \epsilon \tilde{w}$ in $\Om_{\eta_0}$. Consequently, as $w= \tilde{w} =0$ on $\pa\Om$, it follows that $\frac{\pa w}{\pa \nu} \leq \frac{\pa \left(\epsilon \tilde{w} \right)}{\pa \nu} < 0$ on $\pa\Om$, or equivalently, $\frac{\pa u_2}{\pa \nu} < \frac{\pa u_1}{\pa \nu}$ on $\pa\Om$.
\vspace{5pt}

\noindent\textit{\textbf{Case III: Nonlinear case with $p>1$, $q>2$ and $s\in \left( 0, \frac{1}{q'} \right)$.}} We proceed as in case II except for the nonlocal term, which is now nonlinear in nature. Writing $w=u_2-u_1$ and considering $\Om_{\eta}$ the same as in case II, here we obtain the following:
\begin{equation}\label{w_equn_general case}
    \begin{aligned}
        -div \left(A(x)\na w \right) + \mathcal{L}_2 w &+ \frac{k}{d(x)^{\delta +1}} w \geq 0 \text{ in } \Om_{\eta}\\
        w >0 \text{ in } \Om_{\eta},\ w &\geq 0 \text{ in } \sr^N \setminus \Om_{\eta}.
    \end{aligned}
\end{equation}
Here, $\displaystyle{\mathcal{L}_2 w (x):= \int_{\sr^N} a(x,y)\ \frac{w(x) - w(y)}{|x-y|^{N+2s}}\ dy}$ and with $u_t = (1-t) u_1 + t u_2$, the weight function $a(x,y)$ is given by
\[a(x,y):= (q-1) \frac{1}{|x-y|^{(q-2)s}} \int_0^1 |u_t (x) - u_t (y)|^{q-2}\ dt.\]

Comparing \eqref{w_equn_general case} with \eqref{w_equn_2}, it is evident that the only modification arises in the nonlocal component. Thus, the arguments used in case II can be applied without alteration, provided that we prove the following claim.
\vspace{5pt}

\noindent\textbf{\textit{Claim 2:}} There exists $c_5 >0$ such that $\left|\mathcal{L}_2 \tilde{w} \right|< c_5$ in $\Om_{\eta}$, where $\tilde{w}$ is constructed similarly as in case II.
\vspace{5pt}

For every $x\in \overline{\Om_{\eta}}$, we have
\begin{align}\label{I_1 + I_2}
    \left|\mathcal{L}_2 \tilde{w} (x) \right| &\leq \int_{B_1 (x)} |a(x,y)|\ \frac{|\tilde{w} (x) - \tilde{w} (y)|}{|x-y|^{N+2s}}\ dy + \int_{B_1^c (x)} |a(x,y)|\ \frac{|\tilde{w} (x) - \tilde{w} (y)|}{|x-y|^{N+2s}}\ dy \nonumber\\
    &=: J_1 + J_2 \text{ (say)}.
\end{align}
We estimate $J_1$ and $J_2$ separately. Since $u_1, u_2, \tilde{w}$ are Lipschitz in $\sr^N$ and $0< s < \frac{1}{q'}$, for $J_1$ we have:
\begin{align}\label{estimate for I_1}
    J_1 &= \int_{B_1 (x)} |a(x,y)|\ \frac{|\tilde{w} (x) - \tilde{w} (y)|}{|x-y|^{N+2s}}\ dy\nonumber\\
    &= \int_{B_1 (x)} (q-1) \left(\int_0^1 |u_t (x) - u_t (y)|^{q-2}\ dt\right) \frac{|\tilde{w} (x) - \tilde{w} (y)|}{|x-y|^{N+qs}}\ dy\nonumber\\
    &\leq c_6 \int_{B_1 (x)} |x-y|^{q(1-s) -1-N} dy = \frac{c_7}{q(1-s)-1},
\end{align}
where $c_7$ is independent of $x$. Now, we use the fact $q \geq 2$ and estimate $J_2$ as follows:
\begin{align}\label{estimate for I_2}
    J_2 &= \int_{B_1^c (x)} |a(x,y)|\ \frac{|\tilde{w} (x) - \tilde{w} (y)|}{|x-y|^{N+2s}}\ dy\nonumber\\
    &= \int_{B_1^c (x)} (q-1) \left( \int_0^1|u_t (x) - u_t (y)|^{q-2} dt \right) \frac{|\tilde{w} (x) - \tilde{w} (y)|}{|x-y|^{N+qs}}\ dy\nonumber\\
    &\leq c_8 \int_{B_1^c (x)} |x-y|^{-N -qs}\ dy = \frac{c_9}{qs},
\end{align}
where $c_9$ is again independent of $x$. Therefore, \eqref{I_1 + I_2}, \eqref{estimate for I_1} and \eqref{estimate for I_2} together establish claim 2.

Consequently, proceeding similar to case II, analogous to \eqref{comparison q=2 case}, here we have
\begin{equation}\label{comparison general case}
    \begin{aligned}
        -div \left(A(x)\na w \right) + \mathcal{L}_2 w + \frac{k}{d(x)^{\delta +1}} w \geq 0 \text{ in } \Om_{\eta_0}\\
        -div \left(A(x)\na \left(\epsilon \tilde{w} \right) \right) + \mathcal{L}_2 \left(\epsilon \tilde{w} \right) + \frac{k}{d(x)^{\delta +1}} \left(\epsilon \tilde{w} \right) \leq 0 \text{ in } \Om_{\eta_0}\\
        w \geq \epsilon \tilde{w} \text{ in } \sr^N \setminus \Om_{\eta_0}
    \end{aligned}
\end{equation}
for some $\epsilon>0$ and $\eta_0 \in (0, \eta)$ sufficiently small. Hence, by the weak comparison principle, $w \geq \epsilon \tilde{w}$ in $\Om_{\eta_0}$. Since $w= \tilde{w}=0$ on $\pa\Om$, we obtain $\frac{\pa u_2}{\pa \nu} < \frac{\pa u_1}{\pa \nu}$ on $\pa\Om$ in this case as well.

Therefore, the above three cases considered together complete the proof of part $(b)$, i.e., the Hopf-type Strong Comparison Principle, of Theorem \ref{Theorem_SCP}.\hfill\qed


\section{Sobolev versus H\"older local minimizers}\label{section_minimizer}

This section is dedicated to the proof of Theorem \ref{Theorem_minimizer}. We consider the energy functional $I: \xo(\Om) \rar \sr$, given by
\begin{equation*}
    I(u):=\frac{1}{p} \int_{\Om} |\na u|^p\ dx + \frac{1}{q} \int_{\sr^N \times \sr^N} \frac{|u(x)- u(y)|^q}{|x-y|^{N+qs}}\ dx\ dy - \frac{1}{1-\delta} \int_{\Om} \left(u^+ \right)^{1-\delta} dx - \int_{\Om} F \left(x, u^+ \right)\ dx,
\end{equation*}
where $\displaystyle u^+ := \max \{u, 0\}$ and $\displaystyle F(x,t):= \int_0^t f(x,\tau)\ d\tau$. A typical example of the nonlinearity $f$ is the power-type function $f(x,t)= t^r$, $t\geq 0$, with a superlinear but at most critical growth rate. More generally, we assume that $f: \overline{\Om} \times \sr \rar \sr$ is a Carath\'eodory function that satisfies the following:
\begin{enumerate}
    \item[\textit{(f1)}] for every $(x,t)\in \overline{\Om} \times \sr^+$, $f(x,t) \geq 0$ and $f(x,0)=0$,
    \item[\textit{(f2)}] there exists $r>0$ satisfying $\max \{p-1, q-1\} <r \leq r^* =\max\{ p^* -1, q_s^* -1 \}$ such that $f(x,t) \leq C_0 (1+t^r)$, for all $(x,t)\in \Om \times \sr^+$, where $C_0 >0$ is a constant and
    \[p^* = \begin{cases}
    \frac{Np}{N-p}&, \text{ if } p<N,\\
    \infty&, \text{ if } p>N,
    \end{cases}\ \ \text{ and }\ \ q_s^* = \begin{cases}
    \frac{Nq}{N-sq}&, \text{ if } sq<N,\\
    \infty&, \text{ if } sq>N.
    \end{cases}\]
\end{enumerate}

\begin{remark}\label{bigger_exponent}
    Note that if $sq<q \leq p$, then $q_s^* < q^* \leq p^*$, so $r^* = p^* -1$. On the other hand, if $sq < p <q$, we cannot generally compare $p^*$ and $q_s^*$, and either exponent could be larger depending on specific values of $p,q$ and $s$.
\end{remark}

The functional $I$ is naturally associated with the perturbed singular problem:
\begin{equation}\label{minimizer_equation}
    \begin{aligned}
        -\p u + \qs u &= \frac{1}{u^{\delta}} + f(x,u) \text{ in } \Om\\
        u>0 \text{ in } \Om,\ u &= 0 \text{ in } \bdry.
    \end{aligned}
\end{equation}
Proposition \ref{minimizer_solution} below provides a clear connection between local minimizers of $I$ and weak solutions of equation \eqref{minimizer_equation}. The main technical difficulty arises due to the singular term $(u^+)^{1-\delta}$, $0<\delta<1$, which prevents the functional $I$ from being of class $C^1$ on the space $\xo (\Om)$. Nevertheless, $I$ is G\^ateaux differentiable (corollary A.3, \cite{giacomoni2007sobolev}) at every $u\in \xo (\Om)$, provided $u$ satisfies the additional condition
\begin{equation}\label{u_greater_than_distance}
    u(x) \geq c d(x) \text{ a.e. in } \Om, \text{ for some } c>0. \nonumber 
\end{equation}

Next, we introduce a suitable auxiliary function, which will play an essential role throughout this section. We consider the purely singular problem:
\begin{equation}\label{minimizer_truncation_equtaion}
    \begin{aligned}
        -\p u + \qs u = \frac{1}{u^{\delta}} \text{ in } \Om\\
        u >0 \text{ in } \Om,\ u = 0 \text{ in } \bdry
    \end{aligned}
\end{equation}
which admits a unique positive solution $\ul\in \coa$, for some $\alpha\in (0,1)$, satisfying $\ul (x) \sim d(x)$ in $\Om$ (see Theorem 3 and remark 1 of \cite{dhanya2026regularity}).

\begin{proposition}\label{minimizer_solution}
    Let $u \in \xo(\Om)$ be a local minimizer of $I$ and satisfy $u(x) \geq c d(x)$ a.e. in $\Om$, for some $c>0$. Then $u$ is a weak solution of \eqref{minimizer_equation} and $u \geq \ul$ a.e. in $\Om$, where $\ul$ is the solution of \eqref{minimizer_truncation_equtaion}.
\end{proposition}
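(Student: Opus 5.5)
The plan has two parts. First we show that $u$ is a weak solution by exploiting the G\^ateaux differentiability of $I$ at $u$. Then we compare $u$ with $\ul$ by testing against $(\ul-u)^+$.

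\textbf{Step 1 (Euler--Lagrange equation).} Since $u\ge c\,d(x)$, corollary A.3 of \cite{giacomoni2007sobolev} applies, together with the smoothness of the two seminorm terms and of $\int F(x,u^+)$; the latter is well defined and $C^1$ on $\xo(\Om)$ by \textit{(f2)} and the embedding $\xo(\Om)\hookrightarrow L^{r+1}(\Om)$. Hence $I$ is G\^ateaux differentiable at $u$ in every direction $\varphi\in\test$.

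Fix such a $\varphi$. On $\operatorname{supp}\varphi\Subset\Om$ we have $u\ge c\,d\ge c_\varphi>0$, so for $|t|$ small the function $u+t\varphi$ stays positive there. The singular term can then be differentiated under the integral by dominated convergence: the difference quotients of $(u+t\varphi)^{1-\delta}$ are bounded by $C|\varphi|c_\varphi^{-\delta}$. Local minimality gives $\frac{d}{dt}I(u+t\varphi)|_{t=0}=0$, which is exactly the weak formulation of \eqref{minimizer_equation}. The requirement $\operatorname{ess\,inf}_\omega u>0$ on compact subsets is immediate from $u\ge c\,d$. By density, together with the bound $u^{-\delta}\le c^{-\delta}d^{-\delta}$ and Hardy's inequality ($\int d^{-\delta}|\phi|\le \|d^{1-\delta}\|_{L^{p'}}\|\phi/d\|_{L^p}$), the identity extends to all test functions $\phi\in\xo(\Om)$. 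This density step is the main technical point.

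\textbf{Step 2 (comparison $u\ge\ul$).} We take $\phi=(\ul-u)^+\in\xo(\Om)$ both in the equation for $u$ and in \eqref{minimizer_truncation_equtaion} for $\ul$. This is admissible because both $\ul$ and $u$ are bounded below by multiples of $d$, so the singular terms are integrable against $\phi$ by the Hardy argument above. Subtracting the two identities gives
\[
\langle -\p \ul+\qs\ul-(-\p u+\qs u),\phi\rangle=\int_\Om\Big(\frac1{\ul^\delta}-\frac1{u^\delta}\Big)\phi\,dx-\int_\Om f(x,u)\phi\,dx .
\]
On $\{\phi>0\}$ we have $\ul>u$, so the first integrand is $\le0$; the second integral is $\ge0$ by \textit{(f1)}. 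The right-hand side is therefore $\le0$.

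For the left-hand side, the local part is $\int_{\{\ul>u\}}(|\na\ul|^{p-2}\na\ul-|\na u|^{p-2}\na u)\cdot\na(\ul-u)\,dx\ge0$ by strict monotonicity of the vector field $\xi\mapsto|\xi|^{p-2}\xi$. The nonlocal part is also $\ge0$, by the standard inequality $(\mathcal G\ul(x,y)-\mathcal G u(x,y))(\phi(x)-\phi(y))\ge0$. This inequality follows from monotonicity of $t\mapsto|t|^{q-2}t$ after writing $\phi=(\ul-u)^+$ and splitting into the cases according to the signs of $\ul(x)-u(x)$ and $\ul(y)-u(y)$.

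Hence every term vanishes. In particular $\int_{\{\ul>u\}}\big(u^{-\delta}-\ul^{-\delta}\big)\phi\,dx=0$, and its integrand is strictly positive wherever $\phi>0$. Therefore $\phi=0$ a.e., that is, $u\ge\ul$ a.e. in $\Om$. Alternatively, the vanishing of the gradient term gives $\na\phi=0$, and with $\phi\in\wop$ this also forces $\phi=0$.
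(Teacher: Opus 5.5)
Your proof is correct and follows the same route as the paper. Both obtain the Euler--Lagrange equation from G\^ateaux differentiability of $I$ at $u$ together with local minimality, and both then compare $u$ with $\ul$ by testing with $(\ul-u)^+$. Your Hardy-inequality density step, which justifies using $(\ul-u)^+\in\xo(\Om)$ as a test function despite the singular term, makes explicit what the paper leaves implicit when it invokes the weak comparison principle.
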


\begin{proof}
    Since $u(x) \geq c d(x)$ a.e. in $\Om$ and is a local minimizer of $I$, it follows that $I$ is G\^ateaux differentiable at $u$ and for every $\varphi\in\test$ and $t>0$ small enough, we have
    \begin{multline*}
        \int_{\Om} |\na u|^{p-2} \na u \cdot \na\varphi\ dx + \int_{\sr^N \times \sr^N} \frac{|u(x) - u(y)|^{q-2} \left(u(x) - u(y) \right) \left(\varphi(x) - \varphi(y) \right)}{|x-y|^{N+sq}}\ dx\ dy\\
        - \int_{\Om} \frac{1}{u^{\delta}} \varphi\ dx - \int_{\Om} f(x,u) \varphi\ dx = \lim_{t\rar 0+} \frac{I(u + t\varphi) - I(u)}{t} \geq 0.
    \end{multline*}
    Replacing $\varphi$ with $-\varphi$ gives the reverse inequality, and hence equality holds in the above expression for every $\varphi\in\test$. Thus, $u$ is a weak solution of \eqref{minimizer_equation}. Also, since $u$ is non-negative, it is a supersolution of equation \eqref{minimizer_truncation_equtaion}. Taking $\left( \ul-u \right)^+$ as the test function in the weak formulation and applying the weak comparison principle, we conclude that $u \geq \ul$ a.e. in $\Om$.
\end{proof}

\begin{remark}\label{minimizer_solution_remark}
    The above proposition also holds if $u$ is a local minimizer of $I$ in the $C^1_0 \omc$-topology and satisfies $u(x) \geq c d(x)$ in $\Om$, for some $c>0$.
\end{remark}

Now, we proceed to the proof of Theorem \ref{Theorem_minimizer}.
\vspace{5pt}

\noindent\textbf{\textit{Proof of Theorem \ref{Theorem_minimizer}:}} Let $u_0$ be a local minimizer of $I$ in $C^1_0 \omc$-topology satisfying $u_0 (x) \geq k_0 d(x)$ in $\Om$. By the remark \ref{minimizer_solution_remark}, $u_0 \geq \ul$ in $\Om$ and $u_0$ solves
\begin{equation}\label{u_0 equation}
    \begin{aligned}
        -\p u_0 + \qs u_0 &= \frac{1}{u_0^{\delta}} + f(x, u_0)  \text{ in } \Om\\
        u_0 &= 0 \text{ in } \bdry
    \end{aligned}
\end{equation}
in the weak sense. Our aim is to show that $u_0$ is a local minimizer of $I$ in $\xo(\Om)$-topology as well. We begin by proving the result in the subcritical case, i.e., when $r< r^*$, and then extend the approach to the critical case of $r=r^*$ by employing truncation techniques.
\vspace{5pt}

\noindent\textbf{\textit{Subcritical case.}} We fix $l\in (r, r^*)$ and for each $\varepsilon>0$, define the set
\[\mathcal{S}_{\varepsilon}:= \left\{ u\in \xo (\Om) : K(u) \leq \varepsilon \right\},\]
where the functional $K$ is given by
\[K(u):= \frac{1}{l+1} \int_{\Om} |u(x) - u_0 (x)|^{l+1}\ dx, \text{ for } u\in \xo (\Om).\]
We consider the following constraint minimization problem:
\[I_{\varepsilon}:= \inf_{u\in \mathcal{S}_{\varepsilon}} I(u).\]
Since $\max \{p-1, q-1\} <r < r^*$ and $0<\delta <1$, it follows from Fatou's lemma that the functional $I$ is weakly lower semicontinuous on $\xo (\Om)$. Moreover, the set $\mathcal{S}_{\varepsilon}$ is a weakly closed subset of $\xo (\Om)$. Thus, the infimum $I_{\varepsilon}$ is attained for some $u_{\varepsilon}\in \mathcal{S}_{\varepsilon}$, that is, $I_{\varepsilon}=I(u_{\varepsilon})$. Next, we make a crucial claim regarding the sequence $\{u_\varepsilon\},$ which we will verify in three steps.
\vspace{5pt}

\noindent\textbf{Claim:} $\{u_\varepsilon\}$ is uniformly bounded in $C^{1}{(\overline{\Omega})}$ and $\|u_\varepsilon -u_0\|_{C^{1}(\overline{\Omega})}\rightarrow 0$ as $\varepsilon\rightarrow 0.$
\vspace{5pt}

\noindent\textit{\textbf{Step 1:}} For every $\varepsilon>0$, $u_{\varepsilon} \geq \ul$ a.e. in $\Om$, where $\ul$ is the unique solution of \eqref{minimizer_truncation_equtaion}.
\vspace{5pt}

Suppose that $V= supp\ (\ul - u_{\varepsilon})^+$ is non-empty. We set
\[w_t = u_{\varepsilon} + t(\ul - u_{\varepsilon})^+ \text{ and } \psi_1 (t) = I (w_t), \text{ for } t\in [0,1]. \]
Then for every $t\in (0,1)$, 
\begin{align*}
    \psi_1' (t) &= \langle I' (w_t),\ (\ul - u_{\varepsilon})^+ \rangle\\
    &\leq \langle -\p w_t + \qs w_t,\ (\ul - u_{\varepsilon})^+ \rangle - \la \int_{\{ \ul > u_{\varepsilon}\}} \frac{1}{\ul^{\delta}} (\ul - u_{\varepsilon})\ dx\\
    &=  \langle -\p w_t + \p \ul,\ (\ul - u_{\varepsilon})^+ \rangle + \langle \qs w_t - \qs \ul,\ (\ul - u_{\varepsilon})^+ \rangle.
\end{align*}
Since $\ul$ solves \eqref{minimizer_truncation_equtaion} and observing that $w_t - \ul = (1-t) (u_{\varepsilon} - \ul) $ on $V$ for $t\in (0,1)$, we obtain the following: 
$$\begin{array}{lll}
    (1-t) \psi_1'(t) &\hspace{-5pt}\leq &\hspace{-5pt} - \displaystyle \int_{\{\ul > u_{\varepsilon}\}} \left( |\na w_t|^{p-2} \na w_t - |\na \ul|^{p-2} \na \ul \right) \cdot \na (w_t - \ul)\\[5mm]
    &\hspace{-5pt} &\hspace{-5pt} - \displaystyle \int_{\sr^N \times \sr^N} \hspace{-20pt}\frac{\mathcal{G}w_t (x,y) - \mathcal{G}\ul (x,y)}{|x-y|^{N+qs}} \left( (w_t - \ul) (x) - (w_t - \ul) (y) \right)\\[5mm]
    & \hspace{-5pt} < & 0 \mbox{ for all } t\in [0,1].
\end{array}
$$
Hence, 
\[\psi_1(1) < \psi_1(0), \text{ or equivalently, } I (w_1) < I (u_{\varepsilon}).\]
Since $\ul \leq u_0$ in $\Om$, it follows that $|w_1 - u_0| \leq |u_{\varepsilon} - u_0|$ a.e. in $\Om$. Thus, $w_1 \in \mathcal{S}_{\varepsilon}$ and this contradicts the fact that $u_{\varepsilon}$ minimizes $I$ over $\mathcal{S}_{\varepsilon}$. Therefore, $V$ must be an empty set, completing the proof of Step 1.
\vspace{5pt}

\noindent\textit{\textbf{Step 2:}} $\{ u_{\varepsilon} \}$ is uniformly bounded in $L^{\infty} (\Om)$.
\vspace{5pt}

We first observe that the minimization of $I$ over the constraint set $\mathcal{S}_{\varepsilon}$ yields, by the Lagrange multiplier rule, the existence of $\mu_\varepsilon \in \mathbb{R}$ such that $I' (u_{\varepsilon})= \mu_{\varepsilon}K'(u_{\varepsilon})$. Equivalently, $u_{\varepsilon}$ satisfies
\begin{equation}\label{case 2 equation}
    \begin{aligned}
        -\p u_{\varepsilon} + \qs u_{\varepsilon} &= \frac{1}{u_{\varepsilon}^{\delta}} + f(x,u_{\varepsilon}) + \mu_{\varepsilon} |u_{\varepsilon} - u_0|^{l-1} (u_{\varepsilon} - u_0) \text{ in } \Om\\
        u_\varepsilon>0 \mbox{ in } \Omega,\ u_{\varepsilon} &= 0 \text{ in } \bdry.
    \end{aligned}
\end{equation}
Consider the function $\psi_2 (t):= I \left(tu_0 + (1-t)u_{\varepsilon} \right),\ t\in [0,1].$
Since the convex combination $tu_0 + (1-t)u_{\varepsilon} \in \mathcal{S}_{\varepsilon}$ for every $t\in [0,1]$ and $u_{\varepsilon}$ minimizes $I$ over $\mathcal{S}_{\varepsilon}$, the one sided derivative $\psi_2'(0) = \langle I' (u_{\varepsilon}), u_0 - u_{\varepsilon} \rangle \geq 0$, which in turn yields $\mu_{\varepsilon} \leq 0$.

Now, we prove that $\{u_\varepsilon\}$ is uniformly bounded in the $L^{\infty}$-norm, considering two cases separately: $\inf_{\varepsilon \in (0,1)} \mu_{\varepsilon} > -\infty$ and $\inf_{\varepsilon \in (0,1)} \mu_{\varepsilon} = -\infty$.
\vspace{5pt}

\noindent\textit{Case I:} Let $\displaystyle\inf_{\varepsilon \in (0,1)} \mu_{\varepsilon} = -\rho > -\infty$.
\vspace{5pt}
    
We first show that $\{ u_{\varepsilon} \}$ is uniformly bounded in $\xo (\Om)$. Since $r<l$, $u_0 \in L^{\infty}(\Om)$ and $K(u_{\varepsilon}) \leq \varepsilon$, we obtain
\begin{equation*}
    \begin{aligned}
        &\int_{\Om} |\na u_{\varepsilon}|^p\ dx + \int_{\sr^N \times \sr^N} \frac{|u_{\varepsilon} (x) - u_{\varepsilon} (y)|^q}{|x-y|^{N+qs}}\ dx\ dy\\
        = &\int_{\Om} u_{\varepsilon}^{1-\delta}\ dx + \int_{\Om} f(x,u_{\varepsilon}) u_{\varepsilon}\ dx + \int_{\Om} \mu_{\varepsilon} |u_{\varepsilon} - u_0|^{l-1} (u_{\varepsilon} - u_0) u_{\varepsilon}\ dx\\
        \leq &\int_{\Om} u_{\varepsilon}^{1- \delta}\ dx + \int_{\Om} C_0 (1+u_{\varepsilon}^r) u_{\varepsilon}\ dx + \rho \int_{\Om} |u_{\varepsilon} - u_0|^l u_{\varepsilon}\ dx\\
        \leq\ & c_1 \int_{\Om} \left(1+ u_{\varepsilon}^{l+1} \right) dx \leq c_2, \text{ independent of } \varepsilon \in (0,1).
    \end{aligned}
\end{equation*}
Thus, the sequence $\{ u_{\varepsilon} \}$ is uniformly bounded in $\xo (\Om)$. Consequently, by lemma \ref{regularity_lemma} and lemma \ref{l_infinity}, $\{ u_{\varepsilon} \}$ is uniformly bounded in $L^{\infty} (\Om)$.
\vspace{5pt}

\noindent\textit{Case II:} Let $\displaystyle\inf_{\varepsilon \in (0,1)} \mu_{\varepsilon} = -\infty$.
\vspace{5pt}

In view of case I, without loss of generality, we may assume $\mu_{\varepsilon} \leq -1$, for every $\varepsilon \in (0,1)$. Then, if we define 
\[g(t_1,x,t_2):= \frac{1}{t_2^{\delta}} + f(x,t_2) + t_1|t_2 -u_0 (x)|^{l-1} (t_2 - u_0 (x)),\]
there exists $c_3>0$ such that $g(t_1, x, t_2)<0$, for all $(t_1,x,t_2) \in (-\infty, -1] \times \Om \times (c_3, +\infty)$. Hence, testing equation \eqref{case 2 equation} with $(u_{\varepsilon} - c_3 )^+$, we get $u_{\varepsilon} \leq c_3$, for every $\varepsilon$. Therefore, combining the above two cases, the proof of Step 2 is now complete.
\vspace{5pt}

\noindent\textit{\textbf{Step 3:}} $\{ u_{\varepsilon}\}$ is uniformly bounded in $\coa$ for some $\alpha \in (0,1)$.
\vspace{5pt}

A key ingredient in proving this uniform H\"older estimate is to establish a uniform upper bound on the term $  -\mu_{\varepsilon} \|u_{\varepsilon} - u_0 \|^l_{\infty}.$ To achieve this, we consider the weak formulation of  \eqref{case 2 equation} and \eqref{u_0 equation}  with the test function $\varphi = |u_{\varepsilon} - u_0|^{\kappa -1} (u_{\varepsilon} - u_0)$, $\kappa \geq 1$, and then subtract the resulting expressions. This yields 
\begin{equation*}
    \begin{aligned}
        0 \leq &\langle -\p u_{\varepsilon} + \qs u_{\varepsilon},\ \varphi \rangle - \langle -\p u_0 + \qs u_0,\ \varphi \rangle\\
        \leq & \langle -\p u_{\varepsilon} + \qs u_{\varepsilon},\ \varphi \rangle - \langle -\p u_0 + \qs u_0,\ \varphi \rangle - \int_{\Om} \left(\frac{1}{u_{\varepsilon}^{\delta}} - \frac{1}{u_0^{\delta}} \right) \varphi \ dx\\
        = & \int_{\Om} \left(f(x, u_{\varepsilon}) - f(x, u_0) \right) \varphi \ dx + \mu_{\varepsilon} \int_{\Om} |u_{\varepsilon} - u_0|^{\kappa +l}\ dx.
    \end{aligned}
\end{equation*}
Since $u_0 \in L^{\infty}(\Omega)$ and from Step 2, the sequence $\{u_\varepsilon\}$ is uniformly bounded in $L^{\infty}(\Om),$ we have 
\begin{equation*}
    \begin{aligned}
        - \mu_{\varepsilon} \int_{\Om} |u_{\varepsilon} - u_0|^{\kappa +l}\ dx &\leq \int_{\Om} \left(f(x, u_{\varepsilon}) - f(x, u_0) \right) \varphi \ dx \leq c_4 |\Om|^{\frac{l}{\kappa+l}} \|u_\varepsilon-u_0\|_{\kappa+l}^\kappa.
    \end{aligned}
\end{equation*}
Thus, $- \mu_{\varepsilon} \|u_{\varepsilon} - u_0 \|^{l}_{\kappa+l} \leq c_4 |\Om|^{\frac{l}{\kappa+l}}$, where $c_4$ does not depend on $\varepsilon$ and $\kappa$. Taking limit as $\kappa \rar +\infty$, the desired uniform upper bound for $-\mu_{\varepsilon} \|u_{\varepsilon} - u_0 \|^l_{\infty}$ is obtained.

Consequently, combining the uniform lower bound for \(u_\varepsilon\) obtained in Step~1 with the above uniform estimate for \(-\mu_\varepsilon\|u_\varepsilon - u_0\|_{\infty}^l\), we deduce that

\[-\p u_{\varepsilon} + \qs u_{\varepsilon} \leq \frac{c_5}{d(x)^{\delta}} \text{ in } \Om.\]
Let $w_0$ be the solution of
\begin{equation}
    \begin{aligned}
        -\p w_0 + \qs w_0 &= \frac{c_5}{d(x)^{\delta}} \text{ in } \Om\\
        w_0 > 0 \text{ in } \Om,\ w_0 &=0 \text{ in } \bdry.
    \end{aligned}
\end{equation}
Then, by weak comparison principle and Theorem 2 of \cite{dhanya2026regularity}, there exists $k_1 >0$ such that $u_{\varepsilon} (x) \leq w_0 (x) \leq k_1 d(x) $ in $\Om$, for every $\varepsilon \in (0,1)$. From Step 1, we already have $u_\varepsilon(x) \geq k_2 d(x)$, for some $k_2>0$. With the uniform upper and lower bounds for $u_\varepsilon$ expressed in terms of the distance function, we are now in a position to invoke the global H\"older regularity theorem (Theorem 2, \cite{dhanya2026regularity}). This ensures that the sequence $\{u_\varepsilon\}$ is uniformly bounded in the $C^{1,\alpha}(\overline{\Omega})$, for some $\alpha\in (0,1)$ and this concludes Step 3. 

Now, the proof of the main claim is rather straightforward. By the Arzela-Ascoli theorem, upto a subsequence $\{ u_{\varepsilon} \}$ converges in $C^1 \omc$ as $\varepsilon \rar 0$. Moreover, the condition $K(u_{\varepsilon}) \leq \varepsilon$ ensures that $u_{\varepsilon} \rar u_0$ in $C^1 \omc$. Finally, to complete the proof in the subcritical case, suppose, on the contrary, that $u_0$ is not a local minimizer of $I$ in the $\mathbb{X}_0 (\Om)$-topology. Then, for every $\varepsilon>0$, there exists $v_{\varepsilon} \in \xo (\Om)$ such that $\| v_{\varepsilon} - u_0 \|_{\mathbb{X}_0 (\Om)} < \varepsilon$ and $I(v_{\varepsilon}) < I(u_0)$. Since $v_{\varepsilon} \in \mathcal{S}_{\varepsilon}$ and $u_\varepsilon$ minimizes $I$ over $S_\varepsilon$, we obtain
\begin{equation}\label{contradiction1}
    I(u_{\varepsilon}) \leq I(v_{\varepsilon}) < I(u_0),\nonumber
\end{equation}
which contradicts that $u_0$ is a local minimizer of $I$ in the $C^1_0 \omc$-topology. Therefore, $u_0$ is also a local minimizer of $I$ in the $\mathbb{X}_0 (\Om)$-topology, and this completes the proof of Theorem \ref{Theorem_minimizer} in the subcritical case.
\vspace{5pt}

\noindent\textbf{\textit{Critical case.}} Define $ \mathcal{S}^*_{\varepsilon}:= \{u\in \xo(\Om) : K_*(u) \leq \varepsilon\}$, where $\varepsilon>0$ and
\[ K_*(u):= \frac{1}{r^* +1} \int_{\Om} |u(x) -u_0 (x)|^{r^* +1}\ dx,\ u\in \xo(\Om).\]
Since $f$ has a critical growth,  here we use a truncation argument. For $j\in \sn$, we consider
\[T_j (t):= \begin{cases}
    -j, \text{ if } t < -j\\
    t, \text{ if } |t|\leq j\\
    j, \text{ if } t > j
\end{cases} \]
and define the truncated energy functional by
\begin{equation*}
    \hspace{-4pt}I_j (u):= \frac{1}{p} \int_{\Om} |\na u|^p\ dx + \frac{1}{q} \int_{\sr^N \times \sr^N} \frac{|u(x) - u(y)|^{q}}{|x-y|^{N+sq}}\ dx\ dy - \frac{1}{1-\delta} \int_{\Om} \left(u^+ \right)^{1-\delta} dx - \int_{\Om} F_j \left(x,u^+ \right) dx,
\end{equation*}
where $f_j(x,t):= f(x, T_j (t))$ and $F_j (x,t):= \int_0^t f_j (x,t)\ dt$.

Now, suppose the conclusion of Theorem \ref{Theorem_minimizer} is not true. Then for every $\varepsilon>0$, there exists $v_{\varepsilon} \in \xo(\Om)$ such that $\|v_{\varepsilon} - u_0\|_{\xo(\Om)} < \varepsilon$ and
\begin{equation}
    I(v_{\varepsilon})<I(u_0).
\end{equation}
Note that $v_{\varepsilon} \in \mathcal{S}^*_{\varepsilon}$ and by the dominated convergence theorem, we have
\[\int_{\Om} F_j (x,v_{\varepsilon})\ dx \rar \int_{\Om} F (x,v_{\varepsilon})\ dx, \text{ as } j\rar\infty.\]
Fix an $\varepsilon>0$ and choose $\vartheta \in (0, I(u_0) - I(v_{\varepsilon}))$. Then, for all $j$ large enough
\begin{equation}\label{delta_epsilon}
    \left| I_j (v_{\varepsilon}) - I(v_{\varepsilon}) \right| = \left|\int_{\Om} F_j (x,v_{\varepsilon})\ dx - \int_{\Om} F(x, v_{\varepsilon})\ dx \right|< \vartheta.
\end{equation}
Now, since $I_j$ is weakly lower semicontinuous on $\xo(\Om)$, let $u_{j,\varepsilon}$ be the minimizer of $I_j$ over $\mathcal{S}^*_{\varepsilon}$. Since $v_{\varepsilon} \in \mathcal{S}^*_{\varepsilon}$, using \eqref{delta_epsilon} and definition of $\vartheta$, we thus have
\begin{equation}\label{critical_main_inequality}
    I_j (u_{j,\varepsilon}) \leq I_j (v_{\varepsilon}) < I(v_{\varepsilon}) + \vartheta < I(u_0) = I_j (u_0), \text{ for all } j\geq j_0.
\end{equation}
Now, proceeding similar to the subcritical case for the functional $I_j,$ we obtain that $u_{j,\varepsilon}$ solves the following equation:
\begin{equation}
    \begin{aligned}
        -\p u_{j,\varepsilon} + \qs u_{j,\varepsilon} &= \frac{1}{u_{j,\varepsilon}^{\delta}} + f_j \left(x,u_{j,\varepsilon} \right) + \mu_{j,\varepsilon} |u_{j,\varepsilon} - u_0|^{r^* - 1} (u_{j,\varepsilon} - u_0) \text{ in } \Om\\
        u_{j,\varepsilon} &= 0 \text{ in } \bdry.
    \end{aligned}
\end{equation}

By lemma \ref{l_infinity}, $\{u_{j,\varepsilon}\}$ is uniformly bounded in $L^{\infty}(\Om)$. We fix a $j$ large enough so that $I \left(u_{j,\varepsilon} \right) = I_j \left(u_{j,\varepsilon} \right).$ Consequently, for a similar reason as in the subcritical case, $\{u_{j,\varepsilon}\}_{\varepsilon>0}$ is uniformly bounded in $C^1 \omc$ and $u_{j,\varepsilon} \rar u_0$ in $C^1 \omc$ as $\varepsilon \rar 0$. Thus, for $\varepsilon>0$ sufficiently small, from \eqref{critical_main_inequality} we have
\[I \left(u_{j,\varepsilon} \right) = I_j \left(u_{j,\varepsilon} \right) < I(u_0),\]
which contradicts that $u_0$ is a local minimizer of $I$ in $C^1_0 \omc$-topology. Hence, our assumption is wrong and $u_0$ minimizes $I$ locally in $\xo(\Om)$-topology as well. This completes the proof of Theorem \ref{Theorem_minimizer}.\hfill\qed

We conclude this section with a corollary which illustrates a combined application of the Hopf-type Strong Comparison Principle and the Sobolev versus H\"older local minimizer result. Furthermore, this corollary will be crucial in establishing the existence and multiplicity of solutions in the subsequent sections.

\begin{corollary}\label{corollary}
    Let $p,q,s$ satisfy the assumptions of Theorem \ref{Theorem_SCP} and $\underline{v}$, $\overline{v} \in \coa$ be respectively sub- and supersolution of
    \begin{equation}\label{corollary_equation}
        \begin{aligned}
            -\p u + \qs u &= \frac{1}{u^{\delta}} + f(x,u) \text{ in } \Om\\
            u > 0 \text{ in } \Om,\ u &=0 \text{ in } \bdry
        \end{aligned}
    \end{equation}
    where $f$ satisfies \textit{(f1)} and \textit{(f2)}. Suppose further that there exists $c>0$ such that $c d(x) \leq \underline{v} \leq \overline{v}$ in $\Om$ and neither $\underline{v}$ nor $\overline{v}$ is a solution of \eqref{corollary_equation}. Then, the problem \eqref{corollary_equation} admits a solution $v_0 \in \coa$ such that
    \begin{itemize}
        \item[\textit{(a)}] $\underline{v} < v_0 < \overline{v}$ in $\Om$, and

        \item[\textit{(b)}] $v_0$ is a local minimizer of the associated energy functional $I$ in the $\xo (\Om)$-topology.
    \end{itemize}
\end{corollary}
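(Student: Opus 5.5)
We follow the classical sub/supersolution-plus-minimization scheme of Giacomoni–Schindler–Takáč, adapted to the mixed operator. The first step is to truncate the nonlinearity between $\underline{v}$ and $\overline{v}$. We set
\[
\tilde g(x,t)=\begin{cases}\underline{v}^{-\delta}+f(x,\underline{v}), & t<\underline{v}(x),\\ t^{-\delta}+f(x,t), & \underline{v}(x)\le t\le \overline{v}(x),\\ \overline{v}^{-\delta}+f(x,\overline{v}), & t>\overline{v}(x),\end{cases}
\]
and let $\tilde I$ be the associated functional on $\xo(\Om)$, with $\tilde G(x,t)=\int_0^t\tilde g(x,\tau)\,d\tau$. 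Since $\underline{v}\ge c\,d(x)$, Hardy's inequality makes $\tilde g(x,u)\le C d(x)^{-\delta}$ an admissible right-hand side. Hence $\tilde I$ is coercive, weakly lower semicontinuous and $C^1$ on $\xo(\Om)$, and it attains a global minimizer $v_0$.

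Next we show that $v_0$ solves the truncated equation and, by the weak comparison principle, satisfies $\underline{v}\le v_0\le\overline{v}$. For the lower bound we test the difference of the equations for $v_0$ and $\underline{v}$ with $(\underline{v}-v_0)^+$; on $\{v_0<\underline{v}\}$ the right-hand sides coincide, so monotonicity of $-\p+\qs$ forces $(\underline{v}-v_0)^+=0$. The upper bound is analogous. Consequently $v_0$ is a weak solution of \eqref{corollary_equation}. Since $v_0\ge c\,d(x)$ and $v_0\le\overline{v}\in L^\infty$, the regularity theory of \cite{dhanya2026regularity} (Theorem 2) gives $v_0\in C^{1,\alpha}(\overline{\Om})$.

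For part (a) we apply Theorem \ref{Theorem_SCP} twice. In the first application the pair is $u_1=\underline{v}$, $u_2=v_0$, with
\[
f_1=-\p\underline{v}+\qs\underline{v}-\underline{v}^{-\delta}\le f(x,\underline{v})\le f(x,v_0)=f_2 .
\]
This uses that $f(x,\cdot)$ is nondecreasing, which holds for the prototype $t^r$. If monotonicity is not available, we would first add $Mt$ on both sides. Two technical points need care here. First, Theorem \ref{Theorem_SCP} is stated with the equation holding as an equality for a continuous $f_1$; for a subsolution we define $f_1$ as the left-hand side, which is a nonnegative measure or distribution, and we check that the SCP proof only uses the inequality. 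Second, $f_1\not\equiv f_2$ holds precisely because $\underline{v}$ is not a solution. The theorem then yields $\underline{v}<v_0$ in $\Om$ and $\frac{\pa v_0}{\pa\nu}<\frac{\pa\underline{v}}{\pa\nu}$ on $\pa\Om$. The second application, to the pair $(v_0,\overline{v})$, gives $v_0<\overline{v}$ and the reverse boundary derivative inequality.

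For part (b), the strict interior inequalities together with the strict normal-derivative inequalities and compactness of $\overline{\Om}$ imply a $C^1$ neighborhood property. There is $\rho>0$ such that every $u$ with $\|u-v_0\|_{C^1(\overline\Om)}<\rho$ satisfies $\underline{v}\le u\le\overline{v}$ in $\Om$. Near $\pa\Om$ we argue through the normal derivatives using $d(x)$; in the interior we use uniform continuity. On this set $\tilde I(u)-I(u)$ is a constant independent of $u$, because $\tilde G$ and $F+\frac{t^{1-\delta}}{1-\delta}$ differ by a function of $x$ only when $t\in[\underline v,\overline v]$. Thus $v_0$, as a global minimizer of $\tilde I$, is a local minimizer of $I$ in the $C^1_0(\overline{\Om})$-topology. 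Since $v_0\ge c\,d(x)$, Theorem \ref{Theorem_minimizer} upgrades this to a local minimizer in $\xo(\Om)$.

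The main obstacle is justifying Theorem \ref{Theorem_SCP} for sub- and supersolutions, which is needed to obtain the Hopf-type strict inequality. Everything else is standard.
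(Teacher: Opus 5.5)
Your proposal is correct and follows essentially the same route as the paper. The steps match: global minimization of the functional truncated between $\underline{v}$ and $\overline{v}$, weak comparison to place $v_0$ in the order interval, Theorem \ref{Theorem_SCP} (both parts) for the strict inequalities and an $\epsilon_1 d(x)$ margin, constancy of $\tilde I - I$ on a $C^1_0\omc$-neighbourhood of $v_0$, and then Theorem \ref{Theorem_minimizer}.

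The paper likewise applies Theorem \ref{Theorem_SCP} directly to $(\underline v, v_0)$ and $(v_0,\overline v)$. In doing so it tacitly uses the monotonicity of $f(x,\cdot)$ and the sub/supersolution adaptation you flag; both are harmless in its only application, where $f(x,t)=t^r$ and $\underline v=\underline u_\la$, $\overline v=u_{\la_1}$ are genuine solutions of equations with nonnegative continuous data. Your fallback of adding $Mt$, however, would require a comparison principle with an extra zero-order term, which Theorem \ref{Theorem_SCP} does not provide as stated.
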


\begin{proof}
    We consider the truncated energy functional $\tilde{I} : \xo (\Om) \rar \sr$, given by
    \begin{equation}\label{I_tilde}
        \tilde{I}(u):= \frac{1}{p} \int_{\Om} |\na u|^p\ dx + \frac{1}{q} \int_{\sr^N \times \sr^N} \frac{|u(x) - u(y)|^q}{|x-y|^{N+sq}}\ dx\ dy - \int_{\Om} \tilde{H} \left(x,u^+ \right)\ dx,
    \end{equation}
    where $\tilde{H} (x,t):= \int_0^t \tilde{h}(x,\tau)\ d\tau$ is the primitive of the following cut-off function
    \[\tilde{h} (x,t):= \begin{cases}
        \frac{1}{\underline{v}^{\delta}} + f \left(x,\underline{v} \right),\ \text{ if } t < \underline{v} (x),\vspace{5pt}\\
        \frac{1}{t^{\delta}} + f(x,t),\ \text{ if } \underline{v} (x) \leq t \leq \overline{v} (x),\vspace{5pt}\\
        \frac{1}{\overline{v}^{\delta}} + f \left(x, \overline{v} \right),\ \text{ if } t> \overline{v} (x).
    \end{cases}\]
    Clearly, $\tilde{I}$ admits a global minimizer $v_0 \in \xo (\Om)$, which is a weak solution of
    \begin{equation}\label{h_tilde_equation}
        \begin{aligned}
            -\p v_0 + \qs v_0 = \tilde{h} (x,v_0) \text{ in } \Om\\
            v_0 = 0 \text{ in } \bdry.
        \end{aligned}
    \end{equation}
    From the regularity theory (Theorem 3, \cite{dhanya2026regularity}), it follows that $v_0 \in \coa$, for some $\alpha\in (0,1)$. Moreover, a standard test-function approach yields that $\underline{v} \leq v_0 \leq \overline{v}$ in $\Om$ and hence $v_0$ solves the original problem \eqref{corollary_equation}. Furthermore, since $\underline{v}$ and $\overline{v}$ are not solutions of \eqref{corollary_equation}, from the Strong Comparison Principle (Theorem \ref{Theorem_SCP}), we obtain that $\underline{v} < v_0 < \overline{v}$ in $\Om$.
    \vspace{5pt}

    \noindent\textit{\textbf{Claim:}} $v_0$ is a local minimizer of $I$ in $\xo (\Om)$-topology.
    \vspace{5pt}

    From the Hopf-type Strong Comparison Principle (Theorem \ref{Theorem_SCP}), we have $\frac{\pa \overline{v}}{\pa \nu} <\frac{\pa v}{\pa \nu}<\frac{\pa \underline{v}}{\pa \nu}$ on $\pa  \Omega.$ This ensures the existence of a constant $\epsilon_1 >0$ such that
    \[\underline{v} (x) + \epsilon_1 d(x) \leq v_0 (x) \leq \overline{v} (x) - \epsilon_1 d(x) \text{ in } \Om.\]
    Consider any $v\in C^1_0 \omc$ satisfying $\| v- v_0 \|_{C^1 \omc} < \epsilon_1$. Since $\partial\Omega$ is smooth, there exists a compact set $K$ such that for all $x\in \Omega\setminus K$, we have $d(x)=|x-x_0|$ for some $x_0\in \partial \Omega.$ Hence, as $v=v_0 =0$ on $\pa\Om$, there holds
    $$|v(x)-v_0(x)|\leq \|\nabla v-\nabla v_0\|_{\infty} \|x-x_0\|<\epsilon_1 d(x), \text{ for all } x\in \Om \setminus K.$$
    Let $\displaystyle \epsilon_2= \min_{x\in K}\ \left(\overline{v}(x)-v_0(x) \right)$ and set $0< \epsilon_0 \leq \min\ \{\epsilon_1, \epsilon_2\}$. Then, every $v\in C^1_0 \omc$ with $\|v-v_0\|_{C^{1}\omc}<\epsilon_0$ satisfies $v(x)<\overline{v}(x)$, for all $x\in \Omega$. Hence, 
    \begin{align}\label{I and I tilde}
        \tilde{I}(v) &= \frac{1}{p} \int_{\Om} |\na v|^p\ dx + \frac{1}{q} \int_{\sr^N \times \sr^N} \frac{|v(x) - v(y)|^q}{|x-y|^{N+sq}}\ dx\ dy - \int_{\Om} \tilde{H}(x,v)\ dx\nonumber\\
        &= I(v) + \int_{\Om} \int_0^{v(x)} \left[ \frac{1}{t^{\delta}} + f(x,t) - \tilde{h}(x,t) \right] dt\ dx\nonumber\\
        &= I(v) + \int_{\Om} \int_0^{\underline{v} (x)} \left[ \frac{1}{t^{\delta}} + f(x,t) - \tilde{h}(x,t) \right]\nonumber\\
        &= I(v) + c_2,
    \end{align}
    where $c_2$ is a constant independent of all $v\in C^1_0 \omc$ satisfying $\| v- v_0 \|_{C^1 \omc} < \epsilon_0$. As $v_0$ is a global minimizer of $\tilde{I}$ in $\xo (\Om)$, relation \eqref{I and I tilde} implies that $v_0$ is a local minimizer of $I$ in $C^1_0 \omc$-topology. Consequently, by Theorem \ref{Theorem_minimizer}, $v_0$ is a local minimizer of $I$ in $\xo (\Om)$-topology as well and this completes the proof.
\end{proof}


\section{Existence and non-existence of solutions}\label{section_first_solution}

With all the necessary tools in place, we now investigate the existence of positive solutions to \eqref{P_lambda}. Recall that equation \eqref{P_lambda} is given by
\begin{equation}\tag{$P_{\la}$}
    \begin{aligned}
        -\p u + \qs u &= \frac{\la}{u^{\delta}} + u^r \text{ in } \Om\\
        u>0 \text{ in } \Om,\ u &=0 \text{ in } \bdry
    \end{aligned}
\end{equation}
where $0< \delta <1$, $\max \{ p-1, q-1\} < r \leq r^*= \max \{p^* -1, q_s^* -1 \}$ and $\la$ is a positive parameter. In this section, we prove Theorem \ref{Theorem_first_solution}, identifying a threshold $\Lambda$ that marks the transition between the existence and non-existence of positive solutions to \eqref{P_lambda} as the parameter $\la$ varies.

Consider the energy functional $I_{\la}: \xo(\Om) \rar \sr$ associated with \eqref{P_lambda}, given by
\begin{multline}
    I_{\la}(u):=\frac{1}{p} \int_{\Om} |\na u|^p\ dx + \frac{1}{q} \int_{\sr^N \times \sr^N} \frac{|u(x)- u(y)|^q}{|x-y|^{N+qs}}\ dx\ dy\\
    - \frac{\la}{1-\delta} \int_{\Om} \left(u^+ \right)^{1- \delta} dx - \frac{1}{r+1} \int_{\Om} \left(u^+ \right)^{r+1} dx.
\end{multline}
Also consider the truncated energy functional $\il_{\la}: \xo(\Om) \rar \sr$, defined as
\begin{equation}\label{I lambda lower}
    \il_{\la} (u):=\frac{1}{p} \int_{\Om} |\na u|^p\ dx + \frac{1}{q} \int_{\sr^N \times \sr^N} \frac{|u(x)- u(y)|^q}{|x-y|^{N+qs}}\ dx\ dy - \int_{\Om} H_{\la} \left(x, u^+ \right)\ dx,
\end{equation}
where the function $h_{\la}$ and its primitive $H_{\la}$ are given by
\[ h_{\la} (x,t):= \begin{cases}
    \frac{\la}{\underline{u}_{\la}^{\delta}} + \underline{u}_{\la}^r, \text{ if } t < \underline{u}_{\la}(x)\vspace{5pt}\\
    \frac{\la}{t^{\delta}} + t^r, \text{ if } t \geq \underline{u}_{\la}(x)\\
    \end{cases} \text{ and }\ \ \ \ H_{\la}(x,t):= \int_0^t h_{\la} (x,\tau)\ d\tau,\]
and $\ul_{\la}$ is the unique solution of the purely singular problem
\begin{equation}\label{purely_singular_equation}
    \begin{aligned}
        -\p u + \qs u = \frac{\la}{u^{\delta}} \text{ in } \Om\\
        u > 0 \text{ in } \Om,\ u = 0 \text{ in } \bdry.
    \end{aligned}
\end{equation}
Also note that for $0<\la_1 < \la_2$, weak comparison principle yields that $\ul_{\la_1} \leq \ul_{\la_2}$ in $\Om$. Moreover, $\ul_{\la} \rar 0$ in $\xo(\Om)$ as $\la\rar 0+$.

With these preparations, we now move on to the main results of this section. We begin with a technical lemma, the proof of which is similar to that of lemma 2.2 in \cite{silva2024mixed}.

\begin{lemma}\label{pointwise_convergence_of_gradient}
    Let $\la>0$ and $\{u_n\}$ be a bounded sequence in $\xo(\Om)$ such that $\il_{\la}' (u_n) \rar 0$ as $n\rar\infty$. Then, upto a subsequence, $\na u_n (x) \rar \na u(x)$ a.e. in $\Om$ as $n\rar\infty$, where $u$ is the weak limit of the sequence $\{u_n\}$.
\end{lemma}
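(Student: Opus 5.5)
The plan is the classical Boccardo--Murat argument, adapted to the mixed operator. Since $\{u_n\}$ is bounded in $\xo(\Om)$ and this space is reflexive, we may pass to a subsequence with $u_n \rightharpoonup u$ weakly in $\xo(\Om)$. Compact embeddings then give $u_n \rar u$ strongly in $L^m(\Om)$ for every $m$ below the relevant critical exponent, and $u_n\rar u$ a.e. in $\Om$.

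Fix $k>0$ and use the truncation $T_k$ of the previous section. The test function will be $\varphi_n := T_k(u_n - u)$, which is bounded in $\xo(\Om)$ because truncation is a contraction in both seminorms. Since $\il_\la'(u_n)\rar 0$ in the dual, we have $\langle \il_\la'(u_n), \varphi_n\rangle \rar 0$. We then treat the three pieces of $\langle \il_\la'(u_n),\varphi_n\rangle$ separately.

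\emph{The reaction term.} The function $h_\la(x,u_n^+)$ is bounded by $\la \ul_\la^{-\delta} + \ul_\la^r + (u_n^+)^r$. From $\ul_\la \geq c\,d(x)$ and $\delta<1$ we get $\ul_\la^{-\delta} \in L^1(\Om)$; the term $(u_n^+)^r$ is bounded in $L^{(r^*+1)/r}$, hence in $L^1$ (this is enough in the critical case as well). Because $|\varphi_n|\le k$ and $\varphi_n\rar 0$ a.e., the product tends to $0$ after splitting on $\{|u_n-u|\le\eta\}$: on that set it is at most $\eta \sup_n\|h_\la(\cdot,u_n^+)\|_{L^1}$, and on its complement it is at most $k\,\|h_\la(\cdot,u_n^+)\|_{L^1(\{|u_n-u|>\eta\})}$. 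This second bound tends to $0$ by equi-integrability of the dominating terms together with the fact that $|\{|u_n-u|>\eta\}|\rar 0$.

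\emph{The nonlocal term.} Write $\mathcal{G}u_n(x,y)(\varphi_n(x)-\varphi_n(y))$. Split it as a monotone part, $(\mathcal{G}u_n-\mathcal{G}u)(\varphi_n(x)-\varphi_n(y))$, plus a remainder, $\mathcal{G}u\,(\varphi_n(x)-\varphi_n(y))$. The remainder tends to $0$ because $\varphi_n \rightharpoonup 0$ in $W^{s,q}_0$ and $\mathcal{G}u/|x-y|^{(N+sq)/q'}$ lies in $L^{q'}$. For the monotone part we must show $\liminf \geq 0$, and this is the main obstacle: truncation does not commute with the difference quotient. I would handle it by writing $\varphi_n = (u_n-u) - (u_n-u-T_k(u_n-u))$. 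The full difference $u_n-u$ gives a nonnegative contribution by monotonicity of $t\mapsto |t|^{q-2}t$. The tail piece $G_k(u_n-u)$ is supported where $|u_n-u|>k$; I would estimate it by H\"older and make it small using $\|G_k(u_n-u)\|_{W^{s,q}}$ bounds combined with the fact that the tail sets have vanishing measure. Alternatively, as in \cite{silva2024mixed}, one can simply bound its $\limsup$ by a quantity vanishing as $k$ is taken along a suitable sequence; I would follow that lemma closely.

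\emph{The local term.} It follows that
\[\limsup_{n}\int_{\Om}\left(|\na u_n|^{p-2}\na u_n-|\na u|^{p-2}\na u\right)\cdot \na T_k(u_n-u)\,dx \leq 0 ,\]
where we have also used $\int |\na u|^{p-2}\na u\cdot\na\varphi_n \rar 0$. The integrand $e_n$ is nonnegative, so $e_n\,\chi_{\{|u_n-u|\le k\}} \rar 0$ in $L^1$. By Hölder with exponent $\theta\in(0,1)$,
\[\int_{\Om} e_n^\theta \;\leq\; \Bigl(\int_{\{|u_n-u|\le k\}} e_n\Bigr)^\theta |\Om|^{1-\theta} \;+\; \Bigl(\int_{\Om} e_n\Bigr)^\theta\,|\{|u_n-u|>k\}|^{1-\theta},\]
and both terms tend to $0$, since $\int_\Om e_n$ is bounded and $|\{|u_n-u|>k\}|\rar 0$. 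Hence $e_n\rar 0$ a.e. along a subsequence. Strict monotonicity of $\xi\mapsto|\xi|^{p-2}\xi$ then yields $\na u_n(x)\rar\na u(x)$ a.e. by the standard pointwise argument.
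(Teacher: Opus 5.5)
Your Boccardo--Murat skeleton is the natural route; the paper writes no proof of its own and only points to Lemma 2.2 of \cite{silva2024mixed}. Several of your steps are fine: the reaction term (equi-integrability from the $L^{(r^*+1)/r}$ bound together with $\ul_\la^{-\delta}\in L^1$), the two linear remainder terms, and the final $\theta$-H\"older trick with strict monotonicity.

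The gap is the nonlocal monotone term. You need $\liminf_n\geq 0$ there, and the route you propose does not give it. Write $w_n=u_n-u$ and $T_k(w_n)=w_n-G_k(w_n)$. The tail piece
\[
B_n=\int_{\sr^N\times\sr^N}\frac{\bigl(\mathcal{G}u_n(x,y)-\mathcal{G}u(x,y)\bigr)\bigl(G_k(w_n)(x)-G_k(w_n)(y)\bigr)}{|x-y|^{N+sq}}\,dx\,dy
\]
is itself nonnegative, so you would have to show it is small. H\"older only gives $|B_n|\leq C\,[G_k(w_n)]_{s,q}$, which is bounded but not small. The fact that $|\{|w_n|>k\}|\rar 0$ does not help. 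A Gagliardo seminorm also counts interactions between the support and its complement, and it is invariant under the scaling $v\mapsto \varepsilon^{-(N-sq)/q}v(\cdot/\varepsilon)$. For instance, when $p<q$ and $N$ is large, bumps $\varepsilon^{-(N-sq)/q}\phi(x/\varepsilon)$ stay bounded in $\xo(\Om)$ and tend to $0$ in measure, yet their $G_k$-parts keep seminorm close to $[\phi]_{s,q}>0$. Your fallback, bounding $\limsup_n B_n$ ``along a suitable sequence of $k$'', is a citation rather than an argument. It would also clash with the fixed $k$ you use afterwards in the local step. So, as written, $\limsup_n\int_\Om(|\na u_n|^{p-2}\na u_n-|\na u|^{p-2}\na u)\cdot\na T_k(u_n-u)\,dx\leq 0$ is not established.

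The missing idea is a pointwise sign observation, and it makes any decomposition unnecessary. Put $a=u_n(x)-u_n(y)$ and $b=u(x)-u(y)$, so that $a-b=w_n(x)-w_n(y)$. Since $t\mapsto|t|^{q-2}t$ is increasing, $\mathcal{G}u_n(x,y)-\mathcal{G}u(x,y)=|a|^{q-2}a-|b|^{q-2}b$ has the sign of $w_n(x)-w_n(y)$. Since $T_k$ is nondecreasing, $T_k(w_n(x))-T_k(w_n(y))$ has that same sign or vanishes. Hence
\[
\bigl(\mathcal{G}u_n(x,y)-\mathcal{G}u(x,y)\bigr)\bigl(T_k(w_n)(x)-T_k(w_n)(y)\bigr)\geq 0 \quad\text{for all } x,y\in\sr^N .
\]
So the monotone nonlocal term is nonnegative for every $n$. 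With this in place, the rest of your argument goes through unchanged for a fixed $k$.
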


\begin{lemma}\label{solution for lambda_0}
    There exists $\la_0>0$ for which $(P_{\la_0})$ admits a positive solution $u_{\la_0}$.
\end{lemma}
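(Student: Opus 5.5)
Fix $\la>0$ small (to be chosen) and look for a solution of $(P_\la)$ as a local minimizer of the truncated functional $\il_\la$ from \eqref{I lambda lower}. The truncation at $\ul_\la$ removes the singularity, so $\il_\la$ is $C^1$ on $\xo(\Om)$. Any critical point $u$ of $\il_\la$ will then be a solution of $(P_\la)$. To see this, test the equation satisfied by $u$ minus that of $\ul_\la$ with $(\ul_\la-u)^+$. On $\{u<\ul_\la\}$ we have $h_\la(x,u)=\la\ul_\la^{-\delta}+\ul_\la^r\geq \la\ul_\la^{-\delta}$, and monotonicity of $-\p$ and $\qs$ then gives $u\geq \ul_\la$. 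Hence $h_\la(x,u)=\la u^{-\delta}+u^r$. Moreover $\ul_\la\sim d(x)$ (Theorem 3 of \cite{dhanya2026regularity}), so $\operatorname{ess\,inf}_\omega u>0$ on every $\omega\Subset\Om$, as the definition of weak solution requires.

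The existence step is a ball-minimization. Since $H_\la(x,t)\leq \la C\,t\,\ul_\la^{-\delta}+\frac{\la}{1-\delta}t^{1-\delta}+\frac{1}{r+1}t^{r+1}$ for $t\ge0$, and $\ul_\la^{-\delta}\lesssim d^{-\delta}$ is controlled through Hardy's inequality, we get for $\|u\|_{\xo}=\rho$:
\[
\il_\la(u)\geq \frac1p\|\na u\|_p^p+\frac1q[u]_{s,q}^q - C\la\rho^{\,a}-C\rho^{r+1}.
\]
Here $a\in\{1,1-\delta\}$, and $\la$-dependent constants coming from $\ul_\la$ are bounded because $\ul_\la\to0$ monotonically. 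Because $r+1>\max\{p,q\}$, we first fix $\rho$ small so that the leading terms dominate $C\rho^{r+1}$ on the sphere $\|u\|=\rho$. Next we choose $\la_0$ small so that $\inf_{\|u\|=\rho}\il_{\la_0}>0$.

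On the other hand, $\il_{\la_0}(t\varphi)<0$ for small $t>0$ and a fixed positive $\varphi$, since the sublinear term dominates. Hence $m:=\inf_{\overline{B_\rho}}\il_{\la_0}<0$. We then take a minimizing sequence in $\overline{B_\rho}$ and apply Ekeland's variational principle to obtain a Palais–Smale sequence $\{u_n\}$ inside $B_\rho$.

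The passage to the limit is where I expect the main obstacle, specifically in the critical case $r=r^*$. Weak lower semicontinuity fails for the term $\int (u^+)^{r^*+1}$, and $\il_{\la_0}'(u_n)\to0$ does not immediately pass to the limit in the nonlinear operators. To handle the operators I will use Lemma \ref{pointwise_convergence_of_gradient}: $\na u_n\to\na u$ a.e., and the nonlocal quotients also converge a.e. after a subsequence. By Vitali's theorem, or by boundedness in $L^{p'}$ together with a.e. convergence, the weak formulations converge, so the weak limit $u$ is a critical point of $\il_{\la_0}$.

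The remaining point is nontriviality, i.e., $u\neq0$. This is ensured because $\il_{\la_0}(u_n)\to m<0$ while $u_n$ stays in the small ball, and because any solution must lie above $\ul_{\la_0}>0$ by the comparison argument in the first paragraph. Indeed, once $u$ is a critical point, that argument already gives $u\geq\ul_{\la_0}>0$, so $u_{\la_0}:=u$ solves $(P_{\la_0})$ regardless of energy considerations, and the critical-exponent loss of compactness never has to be confronted directly.
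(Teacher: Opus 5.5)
Your argument is correct, and it takes a different route from the paper at the passage to the limit. The setup matches the paper: minimize $\il_{\la}$ over a small closed ball, use $\inf_{\pa B_\rho}\il_{\la_0}>0>m_{\la_0}$, and obtain a bounded Palais--Smale sequence from Ekeland's principle.

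The paper then splits into cases. In the subcritical case it minimizes directly by weak lower semicontinuity. In the critical case it combines the Brezis--Lieb splitting with positivity of $\frac1p\|\cdot\|_{1,p}^p+\frac1q[\cdot]_{s,q}^q-\frac1{r^*+1}\|\cdot\|_{r^*+1}^{r^*+1}$ on small balls. This shows the weak limit attains $m_{\la_0}$, so it is a local minimizer of $\il_{\la_0}$, and only then does the paper read off the equation.

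You only show that the weak limit is a critical point. This uses Lemma \ref{pointwise_convergence_of_gradient} and weak convergence of $|\na u_n|^{p-2}\na u_n$ and of the nonlocal quotients in $L^{p'}$ and $L^{q'}$; it is the content of the paper's later Lemma \ref{PS_sequence_bounded}. You also note that nontriviality is automatic, because every critical point of $\il_{\la_0}$ lies above $\ul_{\la_0}$. Your route is shorter, treats both regimes uniformly, and never meets the loss of compactness. The paper's route additionally gives that $u_{\la_0}$ is a local minimizer with negative energy, which the lemma does not need.

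One step needs repair. You say the $\la$-dependent constants coming from $\ul_\la$ stay bounded "because $\ul_\la\to0$ monotonically". For the term $\la\,\ul_\la^{-\delta}t$ this goes the wrong way. As $\la\downarrow0$, $\ul_\la$ decreases, so $\ul_\la^{-\delta}$ and the Hardy constant $\|d/\ul_\la\|_\infty^{\delta}$ increase. Nothing you wrote shows that $\la\|d/\ul_\la\|_\infty^{\delta}\to0$. You also dropped the contribution $\ul_\la^r t$ on $\{t<\ul_\la\}$.

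Both problems disappear if you use a pointwise bound instead of Hardy.
\begin{itemize}
\item For $0\le t<\ul_\la$: $\la\ul_\la^{-\delta}t\le\la t^{1-\delta}$ and $\ul_\la^r t\le\ul_\la^{r+1}$.
\item For $t\ge\ul_\la$: $H_\la(x,t)\le\frac{2-\delta}{1-\delta}\la t^{1-\delta}+2t^{r+1}$.
\end{itemize}
Hence, with constants independent of $\la$,
\[
H_\la(x,t)\le \frac{2-\delta}{1-\delta}\,\la\, t^{1-\delta}+2t^{r+1}+\ul_\la(x)^{r+1},\qquad t\ge0 .
\]
Moreover $\|\ul_\la\|_{r+1}\to0$, which follows from testing the equation for $\ul_\la$ with $\ul_\la$ itself. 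With this bound, your choice of $\rho$ first and then $\la_0$ goes through.
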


\begin{proof}
    We consider the minimization problem
    \[m_{\la} = \inf_{u\in \overline{B_R}}\ \underline{I}_{\la} (u),\]
    where $\overline{B_R} = \left\{u\in \xo(\Om): \|u\|_{\xo (\Om)} \leq R \right\}$. Observe that whenever $\|u\|_{\xo (\Om)}$ is sufficiently small, we have
    \begin{equation}\label{positive_for_small_norm}
        \frac{1}{p} \int_{\Om} |\na u|^p\ dx + \frac{1}{q} \int_{\sr^N \times \sr^N} \frac{\left|u(x)- u(y) \right|^q}{|x-y|^{N+qs}}\ dx\ dy - \frac{1}{r+1} \int_{\Om} |u|^{r+1}\ dx >0.
    \end{equation}
    Thus we choose $R>0$ and $\la = \la_0 >0$ both small enough so that
    \begin{equation}\label{positive_on_bdry}
        \inf_{u\in \pa B_R}\ \underline{I}_{\la_0} (u) >0.
    \end{equation}
    Further, note that $\underline{I}_{\la_0}$ is bounded below on $\overline{B_R}$ and for any $u\in \xo (\Om)$, $u\not\equiv 0$, we have $\underline{I}_{\la_0} (tu) < 0$, for $t$ small. Hence, $-\infty < m_{\la_0}<0$.
    \vspace{5pt}

    \noindent\textbf{\textit{Subcritical case.}} In this case, $\underline{I}_{\la_0}$ is weakly lower semicontinuous on $\xo (\Om)$. Consequently, there exists $u_{\la_0} \in B_R$ such that $\underline{I}_{\la_0} (u_{\la_0}) = m_{\la_0}$ and $u_{\la_0}$ is a weak solution of
    \begin{equation}\label{lambda 0}
        \begin{aligned}
            -\p u_{\la_0} + \qs u_{\la_0} &= h_{\la_0} (x, u_{\la_0}) \text{ in } \Om\\
            u_{\la_0} &= 0 \text{ in } \bdry.
        \end{aligned}
    \end{equation}
    Taking $\left(\underline{u}_{\la_0} - u_{\la_0} \right)^+$ as the test function in the weak formulation of \eqref{purely_singular_equation} and \eqref{lambda 0}, we get $\underline{u}_{\la_0} \leq u_{\la_0}$ in $\Om$, by weak comparison principle. Hence, $u_{\la_0}$ solves the original problem $(P_{\la_0})$.
    \vspace{5pt}

    \noindent\textbf{\textit{Critical case.}} In the critical case, we begin with a minimizing sequence $\{u_n\} \subset B_R$ such that $\il_{\la_0} (u_n) \rar m_{\la_0}$ as $n\rar\infty$. In view of \eqref{positive_on_bdry}, we have $dist\ (u_n, \pa B_R) \geq \vartheta$, for some $\vartheta>0$, which ensures that $u_n \in B_{R'}$ with $0< R' <R$. By Ekeland's variational principle (see corollary 5.12 of \cite{motreanu2014book}), there exist $R_1 >0$ satisfying $R' \leq R_1 <R$ and $\{v_n\}\subset B_{R_1}$ such that
    \begin{equation}\label{Ekeland's_variational_principle}
        \|u_n - v_n\|_{\xo(\Om)} \leq \frac{1}{n},\ \ \il_{\la_0} (v_n) \leq \il_{\la_0} (u_n)\ \ \text{and}\ \ \il_{\la_0}' (v_n) \rar 0\ \ \text{as}\ n\rar\infty.
    \end{equation}
    Thus, $\{v_n\}$ is also a minimizing sequence for $m_{\la_0}$ and
    \begin{equation}\label{equation_from_ekeland}
        -\p v_n + \qs v_n - h_{\la_0} (x, v_n) = o_n (1).
    \end{equation}
    Moreover, since $\{v_n\}$ is bounded, upto a subsequence $v_n \rightharpoonup u_{\la_0}$ in $\xo (\Om)$, for some $u_{\la_0} \in B_{R_1}$. Now, in view of \eqref{equation_from_ekeland}, lemma \ref{pointwise_convergence_of_gradient} gives that, upto a subsequence, $\na v_n (x) \rar \na u_{\la_0} (x)$ a.e. in $\Om$ and applying the Brezis-Lieb lemma \cite{brezis1983lieb}, it follows that
    \begin{equation}
        \begin{aligned}
            \|v_n\|^p_{1,p} &= \|u_{\la_0}\|^p_{1,p} + \|v_n - u_{\la_0}\|^p_{1,p} + o_n(1),\\[3mm]
            [v_n]^q_{s,q} &= [u_{\la_0}]^q_{s,q} + [v_n - u_{\la_0}]^q_{s,q} + o_n(1),\\[3mm]
            \text{ and } \|v_n\|^{r^* +1}_{r^* +1} &=  \|u_{\la_0} \|^{r^* +1}_{r^* +1} + \|v_n - u_{\la_0} \|^{r^* +1}_{r^* +1} +o_n(1).
        \end{aligned}
    \end{equation}
    Therefore, $v_n - u_{\la_0} \in B_R$ and by \eqref{positive_for_small_norm},
    \begin{multline}
        \frac{1}{p} \int_{\Om} |\na v_n - \na u_{\la_0}|^p\ dx + \frac{1}{q} \int_{\sr^N \times \sr^N} \frac{\left|\left(v_n - u_{\la_0}\right)(x)- \left(v_n - u_{\la_0}\right)(y) \right|^q}{|x-y|^{N+qs}}\ dx\ dy\\
        - \frac{1}{r^* +1} \int_{\Om} |v_n - u_{\la_0}|^{r^* +1}\ dx >0.
    \end{multline}
    As a result, we have,
    \begin{equation*}
        \begin{aligned}
            m_{\la_0} &= \il_{\la_0} (v_n) + o_n(1)\\
            &= \il_{\la_0} (u_{\la_0}) + \frac{1}{p} \|v_n - u_{\la_0}\|^p_{1,p} + \frac{1}{q} [v_n - u_{\la_0}]^q_{s,q} - \frac{1}{r^* +1} \|v_n - u_{\la_0}\|^{r^* +1}_{r^* +1} + o_n(1)\\
            &\geq \il_{\la_0} (u_{\la_0}) + o_n(1),
        \end{aligned}
    \end{equation*}
    which implies $\il_{\la_0} (u_{\la_0}) = m_{\la_0}$. Consequently, $u_{\la_0}$ is a local minimizer of $\il_{\la_0}$ and solves equation \eqref{lambda 0} weakly. Finally, arguing similar to the subcritical case, we obtain that $u_{\la_0}$ is a solution of $(P_{\la_0})$ and this completes the proof.
\end{proof}

The above lemma thus ensures the existence of at least one admissible value of the parameter $\la$ for which \eqref{P_lambda} possesses a positive solution. Consequently, the set
\[\mathcal{A}:= \{\la>0 : (P_{\la}) \text{ admits a solution} \}\]
is non-empty. Defining
\begin{equation}\label{definition of threshold}
    \Lambda:= \sup \mathcal{A},
\end{equation}
we immediately have $\Lambda >0$. Next, we show that $\Lambda$ is finite.

\begin{lemma}[Non-existence beyond the threshold]\label{nonexistence}
    The problem \eqref{P_lambda} does not admit any solution for every $\la > \Lambda$.
\end{lemma}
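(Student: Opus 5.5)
The second assertion follows from the definition \eqref{definition of threshold} once $\Lambda<\infty$: if $(P_{\la})$ had a solution for some $\la>\Lambda=\sup\mathcal A$, then $\la\in\mathcal A$, a contradiction. (As a by-product, to be used for Theorem \ref{Theorem_first_solution}(a), I will also record that $\mathcal A$ is an interval. If $u_{\la}$ solves $(P_{\la})$ and $\la'<\la$, then $u_{\la}$ is a supersolution of $(P_{\la'})$ and $\ul_{\la'}\le \ul_\la\le u_\la$ is a subsolution, so the truncation/minimization argument of Corollary \ref{corollary} yields a solution of $(P_{\la'})$.) The real content is therefore the bound $\Lambda<\infty$, which I will prove by contradiction.

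\textbf{Step 1: pointwise lower bound on the nonlinearity.} Set
\[
m(\la):=\inf_{t>0}\frac{\la t^{-\delta}+t^r}{t^{p-1}+t^{q-1}} .
\]
Because $-\delta<\min\{p-1,q-1\}$ and $r>\max\{p-1,q-1\}$, the quotient tends to $+\infty$ as $t\to0^+$ and as $t\to\infty$, so the infimum is attained. A scaling comparison of the two regimes $t\le1$ and $t\ge1$ should give $m(\la)\ge c\,\la^{\beta}$ for some $\beta>0$, hence $m(\la)\to\infty$ as $\la\to\infty$. Consequently any solution $u$ of $(P_\la)$ is a weak supersolution of
\[
-\p u+\qs u\ \ge\ M\big(u^{p-1}+u^{q-1}\big)\quad\text{in }\Om,\qquad M=m(\la).
\]

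\textbf{Step 2: nonexistence for large $M$ (the generalized eigenvalue problem).} This is the step I expect to be the main obstacle. I will fix a ball $B\Subset\Om$ and a nonnegative $\psi\in\test$ with $\operatorname{supp}\psi\subset B$ and $\psi\not\equiv 0$. By the definition of weak solution, $u\ge c_B>0$ on $B$, so
\[
\varphi=\frac{\psi^p}{(u+\varepsilon)^{p-1}}
\]
is an admissible test function after a density argument. For the local term, Picone's identity gives
\[
\int_\Om|\na u|^{p-2}\na u\cdot\na\varphi\ \le\ \int_\Om|\na\psi|^p .
\]
The nonlocal term is the difficulty, since for $p\neq q$ the discrete Picone inequality does not match this test function. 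I would first try to bound
\[
\int_{\sr^N\times\sr^N}\mathcal G u(x,y)\,\big(\varphi(x)-\varphi(y)\big)\,|x-y|^{-N-sq}\,dx\,dy
\]
from above by a constant depending only on $\psi$ and $c_B$. For $x,y\in B$ I would use the monotonicity of $t\mapsto t^{1-p}$ together with the lower bound $u\ge c_B$, splitting according to whether $u(x)\ge u(y)$. The pairs with $y\notin B$ contribute with a favourable sign. This bound should be independent of $M$. The right-hand side is then at least
\[
M\int_B\big(u^{p-1}+u^{q-1}\big)\frac{\psi^p}{(u+\varepsilon)^{p-1}}\ \ge\ M\int_B\psi^p\,\min\{1,c_B^{\,q-p}\}\ (\text{after }\varepsilon\to0).
\]
However, this lower bound involves $c_B$, which depends on $u$. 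To make the argument uniform, I would instead aim at a constant independent of $u$, for example by using the $u^{q-1}$ term only where $u\le1$ and the $u^{p-1}$ term where $u\ge1$. The goal is an inequality $M\le C(\psi)$, which fails for $\la$ large.

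If the direct nonlocal estimate turns out not to be uniform, my fallback is a comparison argument. I would construct, for large $M$, a family of subsolutions on $B$, namely small multiples of the first eigenfunction of the mixed operator on $B$, and run a sweeping argument against $u$ using Theorem \ref{Theorem_SCP} to reach a contradiction. Either route yields $\la\le\la^*$ for every $\la\in\mathcal A$, hence $\Lambda<\infty$, and the lemma follows as explained at the start.
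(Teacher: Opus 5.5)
Your reduction to $\Lambda<\infty$ is right, and Step 1 is correct. It is the analogue of the paper's pointwise inequality $\lambda t^{-\delta}+t^r>\mu_0(t^{r_1}+t^{r_2})$. The gap is Step 2: it carries the entire content of the lemma, and you leave it unproved.

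\emph{Where Step 2 works.} With $\varphi=\psi^p/u^{p-1}$ your argument closes only when $p=q$. In that case the discrete Picone inequality bounds the nonlocal term by $[\psi]_{s,p}^p$, the right-hand side equals $2M\int\psi^p$, and $M$ is bounded by a constant depending on $\psi$ alone.

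\emph{Where it fails.} For $p\neq q$ the nonlocal integrand $\mathcal G u(x,y)(\varphi(x)-\varphi(y))$ is homogeneous of degree $q-p$ in $u$. So no bound in terms of $\psi$ alone can hold for general positive $u$. For $u(x)>u(y)$, when the integrand is positive, the natural bound is $C\,u(x)^{q-p}\psi(x)^{p-q}|\psi(x)-\psi(y)|^q$. This depends on $\inf_B u$ if $q<p$, and on the size of $u$ if $q>p$. Your estimate through $c_B$ therefore only gives $M\le C(\psi,c_B)$. Since $c_B$ depends on $u$, hence on $\lambda$, this yields no contradiction.

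\emph{A sign error.} Pairs with $y\notin B$ do not contribute with a favourable sign. For $y\in\mathbb R^N\setminus\Omega$ one has $u(y)=0$, and the integrand equals $u(x)^{q-p}\psi(x)^p|x-y|^{-N-sq}>0$.

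\emph{The natural repair.} One could try to absorb the $u^{q-p}$-weighted terms into $M\int\psi^p u^{q-p}$. This does not close as it stands. Near $\partial(\operatorname{supp}\psi)$ the weights you produce, such as $\psi^{p-q}|\psi(x)-\psi(y)|^q$ integrated against the kernel, are not dominated by $\psi^p$ without further $u$-dependent information.

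\emph{The fallback.} This is only a sketch. For $p\neq q$ the operator is not homogeneous, so multiples of an eigenfunction on $B$ are not automatically subsolutions. Moreover, Theorem \ref{Theorem_SCP} concerns the singular problem $(Q)$ for $C^{1,\alpha}$ functions vanishing on $\partial\Omega$. It holds only when $p=q=2$, or when $q\ge 2$ and $s<1/q'$, whereas the lemma is asserted for all $p>sq$.

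\emph{How the paper closes this.} The paper does not prove nonexistence for the generalized eigenvalue inequality; it imports it.
\begin{itemize}
\item It fixes $0<r_1<\min\{p-1,q-1\}$ and $\max\{p-1,q-1\}<r_2<r$.
\item By Theorem 1.2 of \cite{dhanya2025multiplicityresultsmixedlocal}, it chooses $\mu_0$ so large that $-\Delta_p u+(-\Delta_q)^s u=\mu_0(|u|^{r_1-1}u+|u|^{r_2-1}u)$ has only the trivial solution.
\item It notes that $\lambda t^{-\delta}+t^r>\mu_0(t^{r_1}+t^{r_2})$ for all $t>0$ once $\lambda>\tilde\lambda$. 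Hence a putative solution $u_\lambda$ of $(P_\lambda)$ is a supersolution of this convex--concave problem.
\item It globally minimizes the convex--concave energy with nonlinearity truncated above by $u_\lambda$. The concave exponent $r_1$ makes the infimum negative, so the minimizer is nontrivial, and weak comparison places it below $u_\lambda$.
\item This gives a nontrivial solution for $\mu_0$, a contradiction.
\end{itemize}
Your bound by $M(t^{p-1}+t^{q-1})$ cannot feed into this argument, because it does not dominate $\mu_0t^{r_1}$ near $t=0$. To complete your proof you need one of two things:
\begin{itemize}
\item that cited nonexistence result, with $\lambda t^{-\delta}+t^r$ compared directly to $\mu_0(t^{r_1}+t^{r_2})$ as the paper does; or
\item a genuine proof that $-\Delta_p u+(-\Delta_q)^s u\ge M(u^{p-1}+u^{q-1})$ has no positive weak solution for large $M$ when $p\neq q$.
\end{itemize}
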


\begin{proof}
    We fix two positive real numbers $r_1$ and $r_2$ such that
    \[0< r_1 < \min\{ p-1, q-1 \},\ \max\{ p-1, q-1 \} < r_2 <r,\]
    and consider the following equation with a convex-concave nonlinearity
    \begin{equation}\label{convex_concave_equation}
        \begin{aligned}
            -\p u + \qs u &= \mu \left(|u|^{r_1 -1}u + |u|^{r_2 -1}u \right) \text{ in } \Om\\
            u &= 0 \text{ in } \bdry.
        \end{aligned}
    \end{equation}
   In \cite{dhanya2025multiplicityresultsmixedlocal}, the authors investigated the existence of solutions to \eqref{convex_concave_equation}. It is established in Theorem 1.2 of \cite{dhanya2025multiplicityresultsmixedlocal} that when $\mu>0$ is sufficiently large, \eqref{convex_concave_equation} admits only the trivial solution. We fix one such $\mu=\mu_0$ large enough so that \eqref{convex_concave_equation} admits only the trivial solution.

    Next, we note that the map $t \mapsto t^{r+\delta} - \mu_0 t^{r_1 + \delta} - \mu_0 t^{r_2 + \delta},\ t>0$, has a global minimum, say $- \tilde{\la}$. Then, for every $\la > \tilde{\la}$, we have
    \begin{equation}\label{inequality_nonexistence}
        \frac{\la}{t^{\delta}} + t^r > \mu_0 \left(t^{r_1} + t^{r_2} \right),\ \forall\ t>0.
    \end{equation}
    Now, we fix some $\la > \tilde{\la}$ and show that \eqref{P_lambda} does not admit any solution. If, on the contrary, \eqref{P_lambda} has a positive solution $u_{\la}$, then, thanks to \eqref{inequality_nonexistence}, $u_{\la}$ becomes a supersolution to equation \eqref{convex_concave_equation} because
    \begin{equation}
        -\p u_{\la} + \qs u_{\la} = \frac{\la}{u_{\la}^{\delta}} + u_{\la}^r > \mu_0 \left(u_{\la}^{r_1} + u_{\la}^{r_2} \right) \text{ in } \Om.
    \end{equation}
    Consequently, similar to the proof of Theorem 1.2 of \cite{dhanya2025multiplicityresultsmixedlocal}, a global minimization of the associated energy functional truncated above by $u_\lambda$ yields a positive solution of \eqref{convex_concave_equation}. This contradicts our choice of $\mu_0$. Therefore, there does not exist any solution to \eqref{P_lambda} for every $\la > \tilde{\la}$. Consequently, the threshold $\Lambda$ defined in \eqref{definition of threshold} is a finite real number. Finally, from the definition of $\Lambda$, it follows that \eqref{P_lambda} does not admit any positive solution for every $\la > \Lambda$ and this completes the proof.
\end{proof}

The following two results together establish that the set $\mathcal{A}$ is an interval and, more precisely, that $\mathcal{A}= (0, \Lambda]$.

\begin{lemma}[Existence of solution below the threshold]\label{multiplicity_local minimizer}
    The problem \eqref{P_lambda} has a positive solution $u_{\la}$ for every $\la\in (0, \Lambda)$. Moreover, $u_{\la}$ is a local minimizer of the energy functional $I_{\la}$ in the $\xo (\Om)$-topology.
\end{lemma}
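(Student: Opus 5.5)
Fix $\la\in(0,\Lambda)$. By the definition of $\Lambda$ as $\sup\mathcal{A}$, there is $\la'\in(\la,\Lambda]$ such that $(P_{\la'})$ has a solution $u_{\la'}$. The plan is a sub/supersolution argument combined with Corollary \ref{corollary}.

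\textbf{Ordered pair.} Take $\overline{v}:=u_{\la'}$ as the supersolution and $\underline{v}:=\ul_{\la}$, the solution of the purely singular problem \eqref{purely_singular_equation}, as the subsolution. Then
\[-\p\overline{v}+\qs\overline{v}=\frac{\la'}{\overline{v}^{\delta}}+\overline{v}^r>\frac{\la}{\overline{v}^{\delta}}+\overline{v}^r,\]
so $\overline{v}$ is a strict supersolution of $(P_\la)$ and not a solution. Similarly
\[-\p\underline{v}+\qs\underline{v}=\frac{\la}{\underline{v}^{\delta}}<\frac{\la}{\underline{v}^{\delta}}+\underline{v}^r,\]
so $\underline{v}$ is a strict subsolution and not a solution.

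\textbf{Hypotheses of the corollary.} Testing with $(\ul_\la-u_{\la'})^+$ and using the weak comparison principle for the purely singular operator gives $\ul_\la\le\ul_{\la'}\le u_{\la'}$; the second inequality holds because $u_{\la'}$ is a supersolution of the singular problem with parameter $\la'$. By Theorem 3 of \cite{dhanya2026regularity}, $\ul_\la\in\coa$ and $\ul_\la\ge c\,d(x)$. Regularity of $u_{\la'}\in\coa$ needs an $L^\infty$ bound first (Lemma \ref{l_infinity}, including the critical case) and then the global H\"older regularity (Theorem 2 of \cite{dhanya2026regularity}), using $u_{\la'}\ge\ul_{\la'}\ge c\,d(x)$ to control the singular term by $d(x)^{-\delta}$. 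The problem $(P_\la)$ is \eqref{corollary_equation} after scaling: replace $u$ by $\la^{1/(1+\delta)}$ times a new unknown, or simply repeat the corollary's proof with $\la/t^\delta$ in place of $1/t^\delta$, with $f(x,t)=t^r$, which satisfies (f1) and (f2).

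\textbf{Conclusion.} Corollary \ref{corollary} then yields a solution $u_\la$ with $\ul_\la<u_\la<u_{\la'}$ that is a local minimizer of $I_\la$ in the $\xo(\Om)$-topology. In the critical case the truncation $\tilde h$ in the corollary is bounded above by $\overline{v}\in L^\infty$, so the global minimization of $\tilde I$ is subcritical in effect and no compactness issue arises. Theorem \ref{Theorem_minimizer} already covers $r=r^*$.

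\textbf{Main obstacle.} The corollary uses the Hopf-type SCP (Theorem \ref{Theorem_SCP}), which requires the structural assumptions (i) or (ii) on $p,q,s$. If the lemma is meant for all parameters, its local-minimizer part needs a substitute where those fail. The intermediate existence alone, namely a solution between $\underline{v}$ and $\overline{v}$ obtained by global minimization of $\tilde I$ together with the test-function comparison, holds without the SCP. For the local-minimizer property I would invoke the corollary in the regimes where Theorem \ref{Theorem_SCP} applies, since those are exactly the regimes used later in Theorems \ref{Theorem_second_solution_subcritical} and \ref{Theorem_second_solution_critical}.
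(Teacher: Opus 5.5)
Your proposal is correct and follows the paper's proof. The paper likewise takes $\underline v=\ul_\la$ and $\overline v=u_{\la_1}$ for some admissible $\la_1>\la$, notes that neither solves $(P_\la)$, and applies Corollary~\ref{corollary}. Your extra checks fill in points the paper leaves implicit: the $C^{1,\alpha}$ regularity of $u_{\la'}$, and the fact that the local-minimizer part needs the hypotheses of Theorem~\ref{Theorem_SCP} while bare existence does not.

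One small correction concerns your scaling remark. It fails when $p\neq q$, because $-\p(\mu v)+\qs(\mu v)=\mu^{p-1}(-\p v)+\mu^{q-1}\qs v$ is not a multiple of the original operator. Even when $p=q$, the correct exponent would be $1/(p-1+\delta)$, not $1/(1+\delta)$. So the route to use is your alternative: rerun the corollary with $\la t^{-\delta}$ in place of $t^{-\delta}$, which works verbatim.
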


\begin{proof}
    We fix $\la \in (0,\Lambda)$. Let $\underline{v}=\underline{u}_{\la}$ be the unique solution of \eqref{purely_singular_equation}. Now choose $\la_1 \in (\la, \Lambda)$ for which $(P_{\la_1})$ admits a solution and define $\overline{v} = u_{\la_1}$. Since $\la < \la_1$, we have $\underline{v} \leq \overline{v}$ in $\Om$. Moreover, none of $\underline{v}$ and $\overline{v}$ is a solution of \eqref{P_lambda}. Therefore, by corollary \ref{corollary}, \eqref{P_lambda} admits a positive solution $u_{\la} \in \coa$ such that $\underline{v} \leq u_{\la} \leq \overline{v}$ in $\Om$ and $u_{\la}$ is a local minimizer of the energy functional $I_{\la}$ in the $\xo (\Om)$-topology. This concludes the proof as the argument holds for every $\la \in (0, \Lambda)$.
\end{proof}

\begin{lemma}[Existence of solution at the threshold]\label{solution_limiting case}
    The problem \eqref{P_lambda} admits a solution for $\la = \Lambda$.
\end{lemma}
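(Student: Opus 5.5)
The plan is to obtain a solution at $\la=\Lambda$ as a limit of the local-minimizer solutions $u_n := u_{\la_n}$ given by Lemma \ref{multiplicity_local minimizer}, for a sequence $\la_n \uparrow \Lambda$ in $(0,\Lambda)$. Each $u_n$ is a local minimizer of $I_{\la_n}$ in $\xo(\Om)$, so it carries second-order information. We also have the lower bound $u_n \geq \ul_{\la_n} \geq \ul_{\la_1}$, since $\ul_\la$ is monotone in $\la$. The lower bound gives $u_n \geq c\, d(x)$ uniformly in $n$, which is exactly what is needed to pass to the limit in the singular term.

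\textbf{Step 1: uniform energy bound.} First I would bound $I_{\la_n}(u_n)$ from above. The construction in Corollary \ref{corollary} realises $u_n$ as a global minimizer of a truncated functional $\tilde I$ whose truncations are at $\ul_{\la_n}$ and $u_{\la_n'}$. From this, $I_{\la_n}(u_n)$ differs from $\tilde I(u_n) \le \tilde I(\ul_{\la_n})$ by a controlled constant, and $\tilde I(\ul_{\la_n})$ is uniformly bounded because $\ul_{\la_n}\le \ul_\Lambda$. A cleaner alternative is to note that $I_{\la_n}(u_n) \le I_{\la_n}(\ul_{\la_n})$ up to bounded terms, since $u_n$ minimizes over the order interval. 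Either way, $I_{\la_n}(u_n)\le C$.

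\textbf{Step 2: uniform $\xo(\Om)$ bound.} Next I would combine the energy bound with the equation tested against $u_n$ itself:
$$\|\na u_n\|_p^p+[u_n]_{s,q}^q=\la_n\int u_n^{1-\delta}+\int u_n^{r+1}.$$
Forming $I_{\la_n}(u_n)-\frac{1}{r+1}\langle I'_{\la_n}(u_n),u_n\rangle$ and using $r+1>\max\{p,q\}$, I get
$$\Big(\tfrac1p-\tfrac1{r+1}\Big)\|\na u_n\|_p^p+\Big(\tfrac1q-\tfrac1{r+1}\Big)[u_n]^q_{s,q}\le C+C\int u_n^{1-\delta}.$$
Since $1-\delta<1<p$, the sublinear term is absorbed, so $\{u_n\}$ is bounded in $\xo(\Om)$.

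\textbf{Step 3: passage to the limit.} In the subcritical case, extract a subsequence with $u_n\rightharpoonup u$ weakly, strongly in $L^{r+1}$, and a.e. For a test function $\varphi\in\test$ supported in $\omega\Subset\Om$, we have $u_n\ge c_\omega>0$ on $\omega$, so $\la_n u_n^{-\delta}\varphi\to\Lambda u^{-\delta}\varphi$ by dominated convergence. Passing to the limit in the local and nonlocal operator terms needs a.e. convergence of gradients, which Lemma \ref{pointwise_convergence_of_gradient} supplies (the proof adapts to this setting); the nonlocal term then converges by weak continuity of $\mathcal G$ in $L^{q'}$. The limit $u$ satisfies $u\ge\ul_{\la_1}$, so $\operatorname{ess}\inf_\omega u>0$, and hence $u$ is a weak solution of $(P_\Lambda)$.

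The main obstacle is the critical case, where $\int u_n^{r^*+1}$ may fail to converge because of concentration. Two routes are available:
\begin{itemize}
\item[(i)] Use the uniform $L^\infty$ estimate of Lemma \ref{l_infinity}, if it applies to the family $u_n$. Then $u_n^r$ is bounded and dominated convergence suffices.
\item[(ii)] Otherwise, avoid compactness entirely by monotonicity. Take $u_n$ to be the \emph{minimal} solutions, which are increasing in $\la$. Then $u_n\uparrow u$ pointwise, and by Step 2 and Fatou we get $u\in\xo(\Om)$. Monotone convergence handles $\int u_n^r\varphi$, and the a.e. gradient convergence handles the operator terms.
\end{itemize}
I would try (ii) first. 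It requires first showing that minimal solutions exist for $\la<\Lambda$, obtained by iterating from $\ul_\la$ below a supersolution, and that the energy bound of Step 1 holds for them; that bound would come from the minimal solution also being the truncated global minimizer.
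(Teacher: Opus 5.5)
Your Steps 1--3 are essentially the paper's argument. The paper also takes the local-minimizer solutions $u_{\lambda_n}$ of Lemma \ref{multiplicity_local minimizer} and uses the truncated-minimization construction to get $I_{\lambda_n}(u_{\lambda_n})\le I_{\lambda_n}(\underline{u}_{\lambda_n})<0$, so your constant $C$ can even be taken to be $0$. It combines this with the identity obtained by testing with $u_{\lambda_n}$ to bound $\{u_{\lambda_n}\}$ in $\mathbb{X}_0(\Omega)$, and passes to the limit via Lemma \ref{pointwise_convergence_of_gradient}.

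You depart from the paper only in the critical case, and there the obstacle you identify is not real. The paper makes no case distinction because none is needed. To identify the weak limit $u$ as a solution you only need $\int_\Omega u_n^{r}\varphi\,dx\to\int_\Omega u^{r}\varphi\,dx$ for fixed $\varphi\in C_c^\infty(\Omega)$; you never need $\int_\Omega u_n^{r^*+1}dx$ to converge. Since $\{u_n\}$ is bounded in $L^{r^*+1}(\Omega)$, the family $\{u_n^{r}\}$ is bounded in $L^{(r^*+1)/r}(\Omega)$, an exponent $>1$. Along a subsequence $u_n\to u$ a.e., so $u_n^{r}\rightharpoonup u^{r}$ weakly in $L^{(r^*+1)/r}(\Omega)$, which is enough. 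Concentration obstructs strong convergence (this is why Lemma \ref{PS lemma} carries an energy threshold), not the identification of a weak limit. So your subcritical Step 3 already proves the critical case verbatim, with ``strongly in $L^{r+1}$'' replaced by this weak convergence.

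Your fallback route (i) would not work. The estimate of Lemma \ref{l_infinity} is controlled by $\int_\Omega u^{\beta_1(r^*+1)}dx$ with $\beta_1>1$, which an $\mathbb{X}_0(\Omega)$ bound does not control. Moreover, the first step of that proof chooses $R$ from the tail $\int_{\{u>R\}}u^{p^*}dx$, which is uniform only over an equi-integrable family --- precisely what concentration destroys.

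Route (ii) can be completed, but the energy bound for the minimal solutions $\hat u_{\lambda}$ does not come from the truncation of Corollary \ref{corollary}. That truncation is between $\underline{u}_\lambda$ and $u_{\lambda'}$, and its global minimizer need not be minimal. You would instead truncate between $\underline{u}_\lambda$ and $\hat u_\lambda$ itself. The global minimizer then lies in $[\underline{u}_\lambda,\hat u_\lambda]$ and solves $(P_\lambda)$, so it equals $\hat u_\lambda$ by minimality, giving $I_\lambda(\hat u_\lambda)\le I_\lambda(\underline{u}_\lambda)$. This works, but monotone convergence buys nothing that the weak-convergence argument above does not already give, so the detour is unnecessary.
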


\begin{proof}
    We consider an increasing sequence $\{\la_n\} \subset \mathcal{A}$ such that $\lim_{n\rar\infty} \la_n = \Lambda$. Let $u_{\la_n}$ be the corresponding solutions of $(P_{\la_n})$ obtained by lemma \ref{multiplicity_local minimizer}. Then for every $n\in\sn$, $u_{\la_n} \geq \underline{u}_{\la_n}$ in $\Om$ and
    \begin{equation}\label{limit_Lambda}
        \langle -\p u_{\la_n} + \qs u_{\la_n},\ \varphi \rangle = \int_{\Om} \frac{\la_n}{u_{\la_n}^{\delta}} \varphi\ dx + \int_{\Om} u_{\la_n}^r \varphi\ dx, \text{ for every } \varphi\in \xo(\Om).
    \end{equation}
    Thus, taking $u_{\la_n}$ itself as the test function, we get
    \begin{equation}\label{first_relation}
        \|u_{\la_n}\|^p_{1,p} + [u_{\la_n}]^q_{s,q} = \la_n \int_{\Om} u_{\la_n}^{1-\delta}\ dx + \int_{\Om} u_{\la_n}^{r+1}\ dx.
    \end{equation}
    Now, from the construction of $u_{\la_n}$, we obtain that
    \[ I_{\la_n}(u_{\la_n}) \leq \il_{\la_n} (\underline{u}_{\la_n} ) <0.\]
    Therefore,
    \begin{equation*}
        \frac{1}{\max \{ p,q\}} \left(\|u_{\la_n}\|^p_{1,p} + [u_{\la_n}]^q_{s,q} \right) - \frac{\la_n}{1-\delta} \int_{\Om} u_{\la_n}^{1-\delta} - \frac{1}{r+1} \int_{\Om} u_{\la_n}^{r+1} \leq I_{\la_n}(u_{\la_n}) < 0
    \end{equation*}
    \begin{equation}
        \text{or, } c_1 \int_{\Om} u_{\la_n}^{r+1}\ dx \leq c_2 \int_{\Om} u_{\la_n}^{1-\delta}\ dx,
    \end{equation}
    where $c_1= \frac{1}{\max \{ p,q\}} - \frac{1}{r+1} >0$ and $c_2 = \Lambda \left(\frac{1}{1-\delta} - \frac{1}{\max \{ p,q\}}\right) >0$. Therefore, from \eqref{first_relation}, we get
    \begin{equation}
        \|u_{\la_n}\|^p_{1,p} + [u_{\la_n}]^q_{s,q} \leq \left(\Lambda + \frac{c_2}{c_1} \right) \int_{\Om} u_{\la_n}^{1-\delta}\ dx \leq c_3 \|u_{\la_n}\|^{1-\delta}_{r^* +1} \leq c_4 \|u_{\la_n}\|^{1-\delta}_{\xo(\Om)}.
    \end{equation}
    Consequently, $\{u_{\la_n}\}$ is bounded in $\xo(\Om)$ and has a weakly convergent subsequence, say $\{u_{\la_n}\}$, by abuse of notation. Let $u_{\la_n} \rightharpoonup u_{\Lambda}$ as $n\rar\infty$. Note that $\langle\il_{\Lambda}' (u_{\la_n}),\ \varphi \rangle \rar 0$ as $n\rar\infty$. Therefore, by lemma \ref{pointwise_convergence_of_gradient}, $\na u_{\la_n} (x) \rar \na u_{\Lambda} (x)$ a.e. in $\Om$. Taking limit in equation \eqref{limit_Lambda} as $n\rar\infty$, we obtain
    \[\langle -\p u_{\Lambda} + \qs u_{\Lambda},\ \varphi \rangle = \int_{\Om} \frac{\Lambda}{u_{\Lambda}^{\delta}} \varphi\ dx + \int_{\Om} u_{\Lambda}^r \varphi\ dx, \text{ for every } \varphi\in \xo(\Om). \]
    Therefore, $u_{\Lambda}$ is a solution of \eqref{P_lambda} for $\la = \Lambda$.
\end{proof}

Finally, we establish the existence of minimal solutions in the range $(0, \Lambda)$.

\begin{lemma}[Existence of minimal solution]\label{minimal solution}
    The problem \eqref{P_lambda} admits a minimal solution for every $0< \la \leq \Lambda$ and the minimal solutions are strictly increasing with respect to $\la$.
\end{lemma}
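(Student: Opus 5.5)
Fix $\la\in(0,\Lambda]$ and take any solution $u$ of \eqref{P_lambda}. We build the minimal solution by monotone iteration from the bottom, starting at the purely singular solution $\ul_\la$ of \eqref{purely_singular_equation}. First, every solution lies above $\ul_\la$: $u$ is a supersolution of \eqref{purely_singular_equation} because $u^r\geq 0$, and testing with $(\ul_\la-u)^+$ and using the weak comparison principle (as in Proposition \ref{minimizer_solution}) gives $u\geq\ul_\la$.

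Next, set $w_0:=\ul_\la$. For $n\geq 0$, let $w_{n+1}$ be the unique solution of
\[-\p w + \qs w = \frac{\la}{w^{\delta}} + w_n^r \text{ in } \Om,\quad w>0 \text{ in } \Om,\quad w=0 \text{ in } \bdry.\]
Existence and uniqueness follow from a global minimization of the strictly convex functional $\frac1p\|\na w\|_p^p+\frac1q[w]_{s,q}^q-\frac{\la}{1-\delta}\int (w^+)^{1-\delta}-\int w_n^r w$. Uniqueness, together with the comparison $w_{n+1}\geq w_n$, comes from the weak comparison principle for $-\p+\qs-\la t^{-\delta}$. This operator is monotone because $t\mapsto -\la t^{-\delta}$ is increasing, so testing the difference with $(w_n-w_{n+1})^+$ gives a sign. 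By induction the same argument also gives $w_n\leq u$ for every solution $u$: if $w_n\leq u$, then $u$ is a supersolution of the $w_{n+1}$-problem, hence $w_{n+1}\leq u$. Thus $\{w_n\}$ is nondecreasing and bounded above by any fixed solution, in particular by the solution $u_\la$ provided by Lemma \ref{multiplicity_local minimizer} or Lemma \ref{solution_limiting case}.

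Testing each equation with $w_{n+1}$ and using $w_n\leq u_\la$ gives a uniform bound in $\xo(\Om)$. The bound on the right-hand side, $\int u_\la^{r}w_{n+1}\leq \|u_\la\|_{r+1}^{r}\|w_{n+1}\|_{r+1}$, is finite even in the critical case, and the singular term is handled by $\int w^{1-\delta}\leq C\|w\|$. So $w_n\rightharpoonup \hat u$ weakly and $w_n\uparrow\hat u$ pointwise. To pass to the limit in the weak formulation, use $\ul_\la\leq w_n\leq u_\la$ together with dominated convergence for the right-hand side, noting that $\ul_\la^{-\delta}\varphi$ is integrable on $\mathrm{supp}\,\varphi$. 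For the left-hand side, either the monotone $(S_+)$-type argument or the a.e. gradient convergence of Lemma \ref{pointwise_convergence_of_gradient} (adapted to this setting) yields strong convergence. Then $\hat u$ solves \eqref{P_lambda} and $\hat u\leq u$ for every solution $u$, so $\hat u$ is the minimal solution. I expect this limit passage, in particular the gradient convergence for the $p$-Laplacian part, to be the only real technical point. Weak convergence plus $\langle -\p w_{n+1}+\qs w_{n+1}, w_{n+1}-\hat u\rangle\to 0$ should give it through the $(S_+)$ property.

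For the monotonicity, take $\la_1<\la_2\leq\Lambda$ with minimal solutions $\hat u_{\la_1},\hat u_{\la_2}$. Then $\hat u_{\la_2}$ is a supersolution of $(P_{\la_1})$, so the iteration for $\la_1$ stays below it and $\hat u_{\la_1}\leq\hat u_{\la_2}$. For strict inequality, apply Theorem \ref{Theorem_only_scp} after rescaling $u\mapsto \la^{-1/(1+\delta)}$-type normalization to make the singular coefficient $1$. Alternatively, use directly the SCP for the equations $-\p u+\qs u-\la_1u^{-\delta}=f_i$ with $f_1=\hat u_{\la_1}^r$ and $f_2=(\la_2-\la_1)\hat u_{\la_2}^{-\delta}+\hat u_{\la_2}^r$. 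Here $f_1\leq f_2$, $f_1\not\equiv f_2$, and both are continuous, since the solutions are in $\coa$ by the regularity of \cite{dhanya2026regularity}. This gives $\hat u_{\la_1}<\hat u_{\la_2}$ in $\Om$ under the hypotheses of Theorem \ref{Theorem_SCP}. Outside those hypotheses I would settle for $\hat u_{\la_1}\leq\hat u_{\la_2}$ with $\hat u_{\la_1}\neq\hat u_{\la_2}$, which already follows since equal functions would force $\la_1=\la_2$ in the equation.
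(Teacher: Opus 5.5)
Your proposal is correct and follows essentially the same route as the paper. Both use monotone iteration from $\underline{u}_\lambda$ kept below a fixed solution, pass to the limit through a.e.\ gradient convergence, and obtain strictness from the Strong Comparison Principle applied to $-\Delta_p u+(-\Delta_q)^s u-\lambda_1 u^{-\delta}=f_i$ with $f_1=\hat{u}_{\lambda_1}^r\le f_2=(\lambda_2-\lambda_1)\hat{u}_{\lambda_2}^{-\delta}+\hat{u}_{\lambda_2}^r$, and your induction $w_n\le u$ for every solution $u$ justifies minimality more cleanly than the paper does. One caution: the amplitude rescaling you mention first cannot normalize the singular coefficient when $p\neq q$, because $-\Delta_p$ and $(-\Delta_q)^s$ have different homogeneities, so you should use the direct application of the comparison principle with coefficient $\lambda_1$, which is what the paper does.
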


\begin{proof}
    We use the method of monotone iteration for every fixed $0< \la \leq \Lambda$ to prove this result. Define an iterative sequence $\{w_n\}$ as follows: $w_0 := \underline{u}_{\la}$ and for every $n\in\sn$, $w_n$ is the weak solution of
    \begin{equation}\label{monotone_iteration_scheme}
        \begin{aligned}
            -\p w_n + \qs w_n &- \frac{\la}{w_n^{\delta}} = w_{n-1}^r \text{ in } \Om\\
            w_n > 0 \text{ in } \Om,\ w_n &= 0 \text{ in } \bdry.
        \end{aligned}
    \end{equation}
    Using the fact that the maps $t\mapsto -\la t^{-\delta}$ and $t\mapsto t^r$, $t>0$, are monotone increasing, the weak comparison principle yields that $w_0 \leq w_1$ in $\Om.$ By induction, we obtain that the sequence $\{w_n\}$ is non-decreasing. Moreover, $w_0 \leq u_{\Lambda}$ and $w_n \leq u_{\Lambda}$ for all $n\in\sn$, where $u_{\Lambda}$ is the solution of $(P_{\Lambda})$ obtained in lemma \ref{solution_limiting case}. As a result, it follows from \eqref{monotone_iteration_scheme} that $\{w_n\}$ is bounded in $\xo (\Om)$. Thus, there exists $\hat{u}_{\la} \in \xo(\Om)$ such that $w_n \rightharpoonup \hat{u}_{\la} \in \xo(\Om)$ and $w_n \rar \hat{u}_{\la}$ a.e. in $\Om$. Since $\lim_{n\rar\infty} \langle \il'_{\la} (w_n),\ \varphi \rangle =0$, by lemma \ref{pointwise_convergence_of_gradient}, $\na w_n (x) \rar \na \hat{u}_{\la} (x)$ a.e. in $\Om$. Taking limit in \eqref{monotone_iteration_scheme} as $n\rar\infty$, we obtain that the limiting function $\hat{u}_{\la}$ is a solution of \eqref{P_lambda}. It is easy to see that any solution $u$  of \eqref{P_lambda}, by the weak comparison principle, is bounded below by $w_0$. Thus, $\hat{u}_\lambda\leq u$ in $\Om$ and hence, $u_\lambda$ is the minimal solution of \eqref{P_lambda}.

    Finally, we show that the map $\la \mapsto \hat{u}_{\la}$ is strictly increasing, that is, if $0<\la_1 < \la_2$, then $\hat{u}_{\la_1} < \hat{u}_{\la_2}$ in $\Om$. It is evident from the monotone iteration scheme described above that $\ul_{\la_1} \leq \ul_{\la_2} \leq \hat{u}_{\la_2}$ in $\Om$ and hence $\hat{u}_{\la_1} \leq \hat{u}_{\la_2}$ in $\Om$ as well. Thus, using the non-negativity of $\hat{u}_{\la_2}$ and as $\la_1 < \la_2$, we have
    \[-\p \hat{u}_{\la_1} + \qs \hat{u}_{\la_1} - \frac{\la_1}{\hat{u}_{\la_1}^{\delta}} = \hat{u}_{\la_1}^r \leq \hat{u}_{\la_2}^r \leq -\p \hat{u}_{\la_2} + \qs \hat{u}_{\la_2} - \frac{\la_1}{\hat{u}_{\la_2}^{\delta}} \text{ in } \Om.\]
    Consequently, by the Strong Comparison Principle (Theorem \ref{Theorem_SCP}), $\hat{u}_{\la_1} < \hat{u}_{\la_2}$ in $\Om$ and this completes the proof.
\end{proof}

\noindent\textbf{\textit{Proof of Theorem \ref{Theorem_first_solution}:}} The proof of part $(a)$ follows from lemma \ref{multiplicity_local minimizer} and lemma \ref{solution_limiting case}, while part $(b)$ and part $(c)$ are direct consequences of lemma \ref{minimal solution} and lemma \ref{nonexistence} respectively.\hfill\qed


\section{Existence of a second solution}\label{section_second_solution}

In this final section, we establish the existence of a second positive solution to \eqref{P_lambda} for every $\la \in (0, \Lambda)$. We show that the truncated energy functional $\underline{I}_{\la}$, defined in \eqref{I lambda lower}, admits a critical point $v_{\la}$ satisfying $v_{\la} > \underline{u}_{\la}$ and $u_{\la} \not= v_{\la}$. Since $\underline{I}_{\la}$ is a $C^1$-functional, this ensures that $v_{\la}$ is indeed a solution of \eqref{P_lambda}, distinct from $u_{\la}$. The existence of $v_{\la}$ is obtained via the Mountain Pass Theorem. To this end, we introduce a generalized notion of Palais-Smale sequences for $\il_{\la}$ and establish compactness results for such sequences.

\begin{definition}[Generalized Palais-Smale sequence]
    Let $\mathcal{F}$ be a closed set in $\xo(\Om)$ and $c\in\sr$. We say a sequence $\{v_n\} \subset \xo(\Om)$ is a Palais-Smale sequence for $\ti$ at the level $c$ around $\mathcal{F}$, or a $(PS_{\mathcal{F}, c})$ in short, if
    \[ \lim_{n\rar\infty} dist (v_n, \mathcal{F}) =0,\ \lim_{n\rar\infty} \ti (v_n) = c \text{ and } \lim_{n\rar\infty} \ti' (v_n) =0. \]
\end{definition}

\begin{lemma}\label{PS_sequence_bounded}
    Let $\mathcal{F}$ be a closed set in $\xo(\Om)$ and $c\in\sr$. If $\{v_n\} \subset \xo(\Om)$ is a $(PS_{\mathcal{F}, c})$ sequence for $\ti$, then $\{v_n\}$ is bounded in $\xo(\Om)$ and upto a subsequence $v_n \rightharpoonup v_{\la}$ as $n\rar\infty$, where $v_{\la}$ is a weak solution of \eqref{P_lambda}.
\end{lemma}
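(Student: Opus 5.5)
We first prove boundedness with the usual Ambrosetti–Rabinowitz-type combination. Since $\{v_n\}$ is a $(PS_{\mathcal F,c})$ sequence, we have $\ti(v_n)=c+o_n(1)$ and $|\langle \ti'(v_n),v_n\rangle|\le o_n(1)\|v_n\|_{\xo(\Om)}$. Hence we will estimate
\[
\ti(v_n)-\frac{1}{r+1}\langle \ti'(v_n),v_n\rangle \le c+1+o_n(1)\|v_n\|_{\xo(\Om)}.
\]
On the left, the gradient and Gagliardo terms appear with coefficients $\frac1p-\frac1{r+1}>0$ and $\frac1q-\frac1{r+1}>0$, because $r>\max\{p-1,q-1\}$. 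The truncated term contributes $-\int_\Om H_\la(x,v_n^+)+\frac1{r+1}\int_\Om h_\la(x,v_n^+)v_n^+$.

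We treat this truncated term by splitting $\Om$ according to whether $v_n^+<\ul_\la$ or $v_n^+\ge \ul_\la$. On $\{v_n^+<\ul_\la\}$, both pieces are bounded by $\int_\Om(\la\ul_\la^{1-\delta}+\ul_\la^{r+1})$, which is a constant. On the complementary set, the $t^r$ part cancels exactly between $H_\la$ and $\frac{1}{r+1}h_\la t$, up to a constant depending only on $\ul_\la$. The singular part gives at most $C\la\int_\Om (v_n^+)^{1-\delta}$. Using the embedding $\xo(\Om)\hookrightarrow L^1$, this is at most $C(1+\|v_n\|^{1-\delta}_{\xo(\Om)})$. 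We therefore obtain
\[
c_1\big(\|v_n\|^p_{1,p}+[v_n]^q_{s,q}\big)\le C\big(1+\|v_n\|^{1-\delta}_{\xo(\Om)}+o_n(1)\|v_n\|_{\xo(\Om)}\big).
\]
Since $p,q>1$, this forces $\{v_n\}$ to be bounded in $\xo(\Om)$. If the norm is unbounded one should compare with $\max\{\|v_n\|_{1,p},[v_n]_{s,q}\}$ and take powers $\min\{p,q\}>1$, which still dominate the right side. The hypothesis on $\operatorname{dist}(v_n,\mathcal F)$ is not needed for this step.

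By reflexivity we then pass to a subsequence with $v_n\rightharpoonup v_\la$ in $\xo(\Om)$ and $v_n\to v_\la$ a.e. and in $L^m$ for subcritical $m$. Lemma \ref{pointwise_convergence_of_gradient} applies because $\ti'(v_n)\to0$, so $\na v_n\to\na v_\la$ a.e. Consequently $|\na v_n|^{p-2}\na v_n\rightharpoonup|\na v_\la|^{p-2}\na v_\la$ weakly in $L^{p'}$, since it is bounded there and converges a.e. Similarly, $\mathcal G v_n(x,y)|x-y|^{-(N+sq)/q'}$ converges weakly in $L^{q'}(\sr^{2N})$, being bounded and a.e. convergent. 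For the right side, $h_\la(x,v_n^+)\le \la\ul_\la^{-\delta}+(v_n^+)^r+\ul_\la^r$. The singular part is fixed, and $\int_\Om \ul_\la^{-\delta}|\varphi|<\infty$ for $\varphi\in\test$. The power part $(v_n^+)^r$ is bounded in $L^{(r^*+1)/r}$ and converges a.e., hence converges weakly, even in the critical case. We can therefore pass to the limit in $\langle\ti'(v_n),\varphi\rangle\to0$ for every $\varphi\in\test$, which shows that $v_\la$ is a weak solution of $-\p u+\qs u=h_\la(x,u^+)$.

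It remains to remove the truncation. We test the purely singular equation \eqref{purely_singular_equation} and the limit equation with $(\ul_\la-v_\la)^+$. On $\{v_\la<\ul_\la\}$ we have $h_\la(x,v_\la^+)=\la\ul_\la^{-\delta}+\ul_\la^r\ge\la\ul_\la^{-\delta}$, so the weak comparison principle gives $v_\la\ge\ul_\la>0$. Then $h_\la(x,v_\la)=\la v_\la^{-\delta}+v_\la^r$, and $\operatorname{ess\,inf}_\omega v_\la>0$ on compact sets because $\ul_\la\sim d(x)$. Hence $v_\la$ solves \eqref{P_lambda}.

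We expect the main obstacle to be the boundedness step: the truncated primitive $H_\la$ is not of a pure AR form near the truncation level, and the exponents $p\ne q$ prevent a single homogeneous norm. The careful splitting above is what handles this. In the critical case, only weak (not strong) convergence is claimed, and the a.e. gradient convergence lemma is what makes the limit passage legitimate.
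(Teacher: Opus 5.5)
Your proposal is correct and follows essentially the paper's route: the Ambrosetti--Rabinowitz combination $\underline{I}_{\lambda}(v_n)-\frac{1}{r+1}\langle \underline{I}_{\lambda}'(v_n),v_n\rangle$ (the paper multiplies the energy bound by $r+1$ and subtracts), control of the truncated and singular terms by a constant plus $C\|v_n\|_{\mathbb{X}_0(\Omega)}^{1-\delta}$, then weak compactness, Lemma \ref{pointwise_convergence_of_gradient}, passage to the limit, and weak comparison with $\underline{u}_{\lambda}$. Your write-up is somewhat more careful than the paper's in three places: you treat $\langle \underline{I}_{\lambda}'(v_n),v_n\rangle$ as $o_n(1)\|v_n\|_{\mathbb{X}_0(\Omega)}$ rather than $o_n(1)$, you handle the non-homogeneity when $p\neq q$, and you justify the limit in the critical power term.
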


\begin{proof}
    Fix any $c\in \sr$ and suppose $\{v_n\}$ is a $(PS_{\mathcal{F}, c})$ sequence for $\ti$. Since $\ul_{\la}$ is positive in $\Om$, we have
    \begin{multline}\label{PS_I_lambda_goes_to_c}
        \frac{1}{p} \|v_n\|_{1,p}^p + \frac{1}{q} [v_n]_{s,q}^q - \int_{\{v_n >\ \underline{u}_{\la}\}} \left( \frac{\la}{1-\delta} v_n^{1-\delta} + \frac{v_n^{r+1}}{r+1} \right) dx\\
        - \int_{\{v_n \leq\ \underline{u}_{\la}\}} \left( \la \ul_{\la}^{-\delta} + \ul_{\la}^r \right) v_n\ dx \leq c +o_n (1)
    \end{multline}
    and
    \begin{equation}\label{PS_I_lambda'_goes_to_0}
        \|v_n\|_{1,p}^p + [v_n]_{s,q}^q - \int_{\{v_n >\ \underline{u}_{\la}\}} \left( \la v_n^{1-\delta} + v_n^{r+1} \right) dx - \int_{\{v_n \leq\ \underline{u}_{\la}\}} \left( \la \ul_{\la}^{-\delta} + \ul_{\la}^r \right) v_n\ dx = o_n (1).
    \end{equation}
    Multiplying \eqref{PS_I_lambda_goes_to_c} by $(r+1)$ and subtracting \eqref{PS_I_lambda'_goes_to_0} from it, we obtain
    \begin{multline}\label{PS_added_condition}
        \left(\frac{r+1-p}{p}\right) \|v_n\|_{1,p}^p + \left(\frac{r+1-q}{q}\right) [v_n]_{s,q}^q - r \int_{\{v_n \leq\ \underline{u}_{\la}\}} \left( \la \ul_{\la}^{-\delta} + \ul_{\la}^r \right) v_n\ dx\\
        + \left( 1 - \frac{r+1}{1-\delta} \right) \int_{\{v_n >\ \underline{u}_{\la}\}} \la v_n^{1-\delta}\ dx \leq (r+1)c + o_n (1).
    \end{multline}
    Note that as $r > \max\{ p-1, q-1 \}$, the coefficients $\frac{r+1-p}{p}$ and $\frac{r+1-q}{q}$ are strictly positive. A further simplification of \eqref{PS_added_condition} yields
    \begin{align*}
        \|v_n\|_{1,p}^p + [v_n]_{s,q}^q &\leq c_1 + c_2 \int_{\{v_n \leq\ \underline{u}_{\la}\}} \left( \la \ul_{\la}^{1-\delta} + \ul_{\la}^{r+1} \right) dx + c_3 \int_{\{v_n >\ \underline{u}_{\la}\}} \la v_n^{1-\delta}\ dx\\[3mm]
        &\leq c_4 + c_5 \|v_n\|_{r^* +1}^{1-\delta} \leq c_4 + c_6 \left( \|v_n\|_{1,p} + [v_n]_{s,q} \right)^{1-\delta}.
    \end{align*}
    Since the constants $c_4$ and $c_6$ are independent of $n$, it follows from the above inequality that the sequence $\{v_n\}$ is bounded in $\xo(\Om)$. Hence there exists a subsequence, still denoting by $\{v_n\}$, that converges weakly in $\xo(\Om)$. Let $v_{\la}$ be the weak limit of $\{v_n\}$ as $n\rar\infty$.

    Now, $\lim_{n\rar\infty} \ti' (v_n) =0$ implies that $v_n$ satisfies
    \begin{equation}\label{PS_equation}
        \begin{aligned}
            -\p v_n + \qs v_n &= h_{\la} (x, v_n) + o_n (1) \text{ in } \Om\\
            v_n &= 0 \text{ in } \bdry.
        \end{aligned}
    \end{equation}
    in the weak sense and by lemma \ref{pointwise_convergence_of_gradient}, $\na v_n (x) \rar \na v_{\la} (x)$ a.e. in $\Om$. Therefore, observing that $\displaystyle h_{\la} \left(x, v_n \right) \leq \ul_{\la}^{-\delta} + \ul_{\la}^r + v_n^r$, where $r\leq r^*$, taking limit in equation \eqref{PS_equation} as $n\rar \infty$, we obtain that $v_{\la}$ is a weak solution of
    \begin{equation}\label{PS_limit_equation}
        \begin{aligned}
            -\p v_{\la} + \qs v_{\la} &= h_{\la} (x, v_{\la}) \text{ in } \Om\\
            v_{\la} &= 0 \text{ in } \bdry.
        \end{aligned}
    \end{equation}
    Taking $(\underline{u}_{\la} -v_{\la})^+$ as a test function, from equations \eqref{purely_singular_equation} and \eqref{PS_limit_equation}, weak comparison principle yields that $v_{\la} \geq \underline{u}_{\la}$ a.e. in $\Om$. Consequently, $v_{\la}$ solves \eqref{P_lambda}.
\end{proof}

\begin{definition}
    Let $\mathcal{F}$ be a closed set in $\xo(\Om)$ and $c\in\sr$. We say that $\ti$ satisfies the Palais-Smale condition at the level $c$ around $\mathcal{F}$ if every $(PS_{\mathcal{F}, c})$ sequence $\{v_n\} \subset \xo(\Om)$ for $\ti$ admits a strongly convergent subsequence in $\xo(\Om)$.
\end{definition}

In the subcritical case, the compactness of the Sobolev embedding ensures that $\ti$ satisfies the Palais-Smale condition at every level $c$. However, in the critical case, the functional satisfies only a local Palais-Smale condition. To proceed, we assume that $u_{\la}$ has the minimum energy among all weak solutions of \eqref{P_lambda}, because if not, we have already found a second solution for \eqref{P_lambda}. Also, note that weak comparison principle confirms that if $w_{\la}$ is any solution of \eqref{P_lambda}, then $w_{\la} \geq \ul_{\la}$ in $\Om$. Therefore, thanks to the definition of the functional $\il_{\la}$, without loss of generality, we assume $\il_{\la} (u_{\la}) \leq \il_{\la} (w_{\la})$ for any weak solution $w_{\la}$ of \eqref{P_lambda}.

\begin{lemma}\label{PS lemma}
    The functional $\ti$ satisfies the Palais-Smale condition in the following settings:
    \begin{itemize}
        \item[\textit{(a)}] If $r< r^*$, at all levels $c \in \sr$,

        \item[\textit{(b)}] If $p\geq q$ and $r=r^* = p^* -1$, at every level $c \in \sr$ satisfying
        \[c< \ti (u_{\la}) + \frac{1}{N} S_0^{N/p},\]
        where $S_0$ is the best constant in the classical Sobolev inequality for $\wop$.
    \end{itemize}
\end{lemma}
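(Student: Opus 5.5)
Take a $(PS_{\mathcal{F},c})$ sequence $\{v_n\}$. By Lemma \ref{PS_sequence_bounded} it is bounded in $\xo(\Om)$. Along a subsequence $v_n\rightharpoonup v_\la$, where $v_\la$ is a weak solution of \eqref{P_lambda} with $v_\la\geq\ul_\la$. Lemma \ref{pointwise_convergence_of_gradient} also gives $\na v_n\to\na v_\la$ a.e. Set $z_n:=v_n-v_\la$; the goal is $z_n\to0$ strongly. The plan is to test $\ti'(v_n)-\ti'(v_\la)$ against $z_n$ and use the monotonicity (and, through Brezis--Lieb, the splitting) of the $p$- and fractional $q$-Laplacian.

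\textbf{Case (a).} We have $|h_\la(x,t)|\leq \la\ul_\la^{-\delta}+\ul_\la^r+|t|^r$ with $r+1<r^*+1$, and $\ul_\la\sim d(x)$. By Hardy's inequality and compact embeddings, $\int_\Om h_\la(x,v_n)z_n\,dx\to0$. The singular part needs slightly more care: the integrand is bounded by $\la\ul_\la^{-\delta}|z_n|\le C d^{1-\delta}|z_n|/d$. Since $|z_n|/d$ is bounded in $L^p$ by Hardy and $z_n\to0$ in $L^{p}$ strongly, Vitali applied to $d^{-\delta}|z_n|$ gives convergence to zero. Hence
\[\langle -\p v_n+\p v_\la,z_n\rangle+\langle\qs v_n-\qs v_\la,z_n\rangle\to0.\]
The standard $(S_+)$ property of both operators (the Simon-type inequalities for $p\ge2$ and $p<2$, and similarly for $q$) then forces $\|z_n\|_{1,p}+[z_n]_{s,q}\to0$.

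\textbf{Case (b).} Here $r^*+1=p^*$ and $\xo(\Om)=\wop$, since $q\le p$. The lower-order pieces of $h_\la$, namely the cutoff region and the singular term, still pass to the limit as in (a). For the critical term, Brezis--Lieb gives
\[\|v_n\|_{1,p}^p=\|v_\la\|_{1,p}^p+\|z_n\|_{1,p}^p+o(1),\qquad \|v_n\|_{p^*}^{p^*}=\|v_\la\|_{p^*}^{p^*}+\|z_n\|_{p^*}^{p^*}+o(1).\]
The same splitting holds for $[\cdot]_{s,q}^q$, using a.e. convergence of $v_n$. From $\langle\ti'(v_n),v_n\rangle-\langle\ti'(v_\la),v_\la\rangle=o(1)$ we obtain
\[\|z_n\|_{1,p}^p+[z_n]_{s,q}^q=\|z_n\|_{p^*}^{p^*}+o(1).\]
The energies split in the same way:
\[\ti(v_n)=\ti(v_\la)+\tfrac1p\|z_n\|_{1,p}^p+\tfrac1q[z_n]_{s,q}^q-\tfrac1{p^*}\|z_n\|_{p^*}^{p^*}+o(1).\]

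Along a subsequence let $\|z_n\|_{1,p}^p\to a$, $[z_n]^q_{s,q}\to b$ and $\|z_n\|_{p^*}^{p^*}\to a+b$. Sobolev gives $S_0(a+b)^{p/p^*}\le a$. If $a+b>0$, this yields $a+b\ge a\ge S_0^{N/p}$ (using $(a+b)^{p/p^*}\le a^{p/p^*}\cdot$, after handling $b\ge0$). Then
\[c\ \ge\ \ti(v_\la)+\Big(\tfrac1p-\tfrac1{p^*}\Big)a+\Big(\tfrac1q-\tfrac1{p^*}\Big)b\ \ge\ \ti(v_\la)+\tfrac1N S_0^{N/p},\]
where the last step uses $q\le p$, so $\tfrac1q\ge\tfrac1p$. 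Since $v_\la$ is a weak solution of \eqref{P_lambda}, the minimal-energy assumption gives $\ti(v_\la)\ge\ti(u_\la)$. Therefore $c\ge\ti(u_\la)+\frac1N S_0^{N/p}$, contradicting the level bound. Hence $a=b=0$ and $v_n\to v_\la$ strongly.

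\textbf{Main obstacle.} The delicate point is the inequality $a+b\ge S_0^{N/p}$ when $b>0$. It comes from $a\ge S_0(a+b)^{p/p^*}$: dividing gives $(a+b)^{1-p/p^*}\ge S_0(a+b)^{p/p^*}\cdot(a+b)^{1-p/p^*}/a\ge S_0$ only after noting $a\le a+b$. So $a+b\ge S_0^{N/p}$. It then has to be combined correctly with the coefficient estimate $\tfrac1q-\tfrac1{p^*}\ge\tfrac1N$. A second point to watch is justifying the Brezis--Lieb splitting for the nonlocal seminorm, which requires a.e. convergence of $(v_n(x)-v_n(y))/|x-y|^{N/q+s}$ on $\sr^{2N}$.
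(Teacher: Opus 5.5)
Your proof is correct. Part (b) is essentially the paper's argument. You use the Brezis--Lieb splitting of $\langle \ti'(v_n),v_n\rangle$ and of $\ti(v_n)$, the Sobolev inequality to force $\lim\|v_n-v_\la\|_{p^*}^{p^*}\ge S_0^{N/p}$ when compactness fails, $\frac1q-\frac1{p^*}\ge\frac1N$ from $q\le p$, and the normalisation $\ti(u_\la)\le\ti(v_\la)$. The paper argues by contradiction, assuming $\|v_n-v_\la\|_{1,p}\ge k$, rather than with your limits $a,b$. It also uses $q\le p$ only implicitly in its final chain of inequalities, so your bookkeeping is cleaner.

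Two small points in (b):
\begin{itemize}
\item The justification in your ``main obstacle'' paragraph is garbled; in fact $(a+b)^{p/p^*}\ge a^{p/p^*}$. The conclusion still holds: from $S_0(a+b)^{p/p^*}\le a\le a+b$ with $a+b>0$, divide by $(a+b)^{p/p^*}$ to get $(a+b)^{p/N}\ge S_0$.
\item The critical term really produces $\|v_n^+-v_\la\|_{p^*}^{p^*}$, not $\|z_n\|_{p^*}^{p^*}$. This is harmless: since $v_\la>0$, you have $|v_n^+-v_\la|\le|z_n|$ pointwise, so the Sobolev step still gives the same bound.
\end{itemize}

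In part (a) you take a different, equally standard route.
\begin{itemize}
\item \emph{The paper's route.} It tests with $v_n$ itself. From $\langle\ti'(v_n),v_n\rangle\to0$ and subcriticality it gets $\|v_n\|_{1,p}^p+[v_n]_{s,q}^q\to\|v_\la\|_{1,p}^p+[v_\la]_{s,q}^q$. Weak lower semicontinuity of each seminorm then gives convergence of each norm separately, and uniform convexity (Radon--Riesz) finishes. This avoids Simon-type monotonicity inequalities and needs only the norm structure. However, it must pass to the limit in $\int_\Om h_\la(x,v_n)v_n\,dx$, and the paper does not justify the singular cut-off piece $\la\ul_\la^{-\delta}v_n$ explicitly.
\item \emph{Your route.} You test with $v_n-v_\la$, show $\int_\Om h_\la(x,v_n)(v_n-v_\la)\,dx\to0$, and conclude from the $(S_+)$ property of $-\p$ and $\qs$. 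Your Hardy-plus-Vitali treatment of $\ul_\la^{-\delta}|v_n-v_\la|$ is exactly the uniform-integrability argument that is needed; the same argument would also close the small gap in the paper's version. Your approach is slightly more robust, since it does not rely on splitting a sum of limits into individual norm limits.
\end{itemize}
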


\begin{proof}
    Let $c$ be a real number as in the hypotheses and $\{v_n\}$ be a $(PS_{\mathcal{F}, c})$ sequence for $\ti$. Then by lemma \ref{PS_sequence_bounded}, upto a subsequence $v_n \rightharpoonup v_{\la}$ in $\xo(\Om)$, where $v_{\la}$ is a weak solution of \eqref{P_lambda}. We show that, upto a subsequence, $v_n \rar v_{\la}$ strongly in $\xo(\Om)$.
    \vspace{5pt}
    
    \noindent$(a)$ \textit{\textbf{Subcritical case, that is, $r<r^*$.}} We have
    \begin{equation*}
        \langle \ti' (v_n), v_n \rangle = \|v_n\|_{1,p}^p + [v_n]_{s,q}^q - \int_{\Om} h_{\la} \left(x, v_n \right) v_n\ dx.
    \end{equation*}
    Since $r< r^*$ and $\lim_{n\rar\infty} \ti' (v_n) =0$, we have
    \[\lim_{n\rar\infty} (\|v_n\|_{1,p}^p + [v_n]_{s,q}^q) = \int_{\Om} h_{\la} (x,v_{\la}) v_{\la}\ dx = \|v_{\la}\|_{1,p}^p + [v_{\la}]_{s,q}^q,\]
    where the second equality holds as $v_{\la}$ is a weak solution of \eqref{P_lambda}. Due to non-negativity, upto a subsequence, $\lim_{n\rar\infty} \| v_n\|_{1,p}$ and $\lim_{n\rar\infty} [v_n]_{s,q}$ exist. Let $\lim_{n\rar\infty} \| v_n\|_{1,p} = l_p$ and $\lim_{n\rar\infty} [v_n]_{s,q} = l_q$. Then we have
    \begin{equation}\label{norm}
        l_p^p + l_q^q = \|v_{\la}\|_{1,p}^p + [v_{\la}]_{s,q}^q.
    \end{equation}
    We claim that $l_p= \|v_{\la}\|_{1,p}$ and $l_q = [v_{\la}]_{s,q}$. Since $v_n \rightharpoonup v$ in $\xo(\Om)$ and norm is weakly lower semicontinuous, we have
    \[\|v_{\la}\|_{1,p} \leq \liminf \|v_n\|_{1,p} = l_p \text{ and } \|v_{\la}\|_{s,q} \leq \liminf\ [v_n]_{s,q} = l_q.\]
    If $\|v_{\la}\|_{1,p} < l_p,$ or  $[v_{\la}]_{s,q} < l_q$ happens, then it will violate \eqref{norm}. Hence, $\|v_n\|_{\xo(\Om)} \rar \|v_{\la}\|_{\xo(\Om)}$ as $n\rar\infty$. Therefore, upto a subsequence, $v_n \rar v_{\la}$ strongly in $\xo(\Om)$, where $v_{\la}$ is a weak solution of \eqref{P_lambda}.
    \vspace{5pt}
    
    \noindent$(b)$ \textit{\textbf{Critical case, that is, $r=r^*$.}} By the Brezis-Lieb lemma \cite{brezis1983lieb}, we have
    \begin{equation*}
        \begin{aligned}
            &\|v_n - v_{\la}\|^p_{1,p} + [v_n - v_{\la}]^q_{s,q} - \|(v_n - v_{\la})^+\|^{p^*}_{p^*}\\
            = & \|v_n\|^p_{1,p} - \|v_{\la}\|^p_{1,p} + [v_n]^q_{s,q} - [v_{\la}]^q_{s,q} - \|(v_n - v_{\la})^+\|^{p^*}_{p^*} + o_n (1)\\
            = & \langle \il_{\la}' (v_n), v_n - v_{\la} \rangle + \int_{\Om} h_{\la} (x, v_n) (v_n - v_{\la})\ dx - \|(v_n - v_{\la})^+\|^{p^*}_{p^*} + o_n (1)\\
            = & \langle \il_{\la}' (v_n), v_n - v_{\la} \rangle + o_n (1).
        \end{aligned}
    \end{equation*}
    As $\il_{\la}' (v_n) \rar 0$ as $n \rar\infty$, it follows that
    \begin{equation}\label{PS_critical}
        \|(v_n - v_{\la})^+\|^{p^*}_{p^*} = \|v_n - v_{\la}\|^p_{1,p} + [v_n - v_{\la}]^q_{s,q} + o_n (1).
    \end{equation}
    Dividing both sides by $\|v_n - v_{\la}\|^p_{L^{p^*} (\Om)}$, we obtain
    \begin{equation}\label{S_lower_bound}
        \|(v_n - v_{\la})^+\|^{p^* -p}_{p^*} \geq S_0 + \frac{o_n (1)}{\|(v_n - v_{\la})^+\|^p_{p^*}}.
    \end{equation}
    Now, if $v_n \not\rar v_{\la}$ strongly in $\wop$, there exists $k>0$ such that $\|v_n - v_{\la}\|_{1,p} \geq k$, for all $n\in \sn$ and \eqref{S_lower_bound} implies
    \begin{equation}\label{S_power_N/p}
        \|(v_n - v_{\la})^+\|^{p^*}_{p^*} \geq S_0^{N/p} + o_n (1).
    \end{equation}
    Therefore, for $n$ sufficiently large, using \eqref{S_power_N/p} and \eqref{PS_critical}, we have
    \begin{equation}\label{PS_minimum_energy}
        \begin{aligned}
            \frac{1}{N} S_0^{N/p} &\leq \frac{1}{p} \|v_n - v_{\la}\|^p_{1,p} + \frac{1}{q} [v_n - v_{\la}]^q_{s,q} - \frac{1}{p^*} \|(v_n - v_{\la})^+ \|^{p^*}_{p^*} + o_n (1)\\[2mm]
            &= \il_{\la} (v_n) - \il_{\la} (v_{\la}) + o_n (1)\\[2mm]
            &= c - \il_{\la} (v_{\la}) + o_n (1)\\[2mm]
            &< \il_{\la} (u_{\la}) + \frac{1}{N} S_0^{N/p} - \il_{\la} (v_{\la}) .
        \end{aligned}
    \end{equation}
    Since $v_{\la}$ is a weak solution of \eqref{P_lambda}, by our assumption $\il_{\la} (u_{\la}) \leq \il_{\la} (v_{\la})$. Thus, from \eqref{PS_minimum_energy}, we obtain
    \[\frac{1}{N} S_0^{N/p} < \il_{\la} (u_{\la}) + \frac{1}{N} S_0^{N/p} - \il_{\la} (v_{\la}) \leq \frac{1}{N} S_0^{N/p},\]
    which is a contradiction. Therefore, our assumption is wrong and hence $v_n \rar v_{\la}$ strongly in $\wop$.
\end{proof}

Now, we obtain the second solution in the subcritical case using the Mountain Pass Theorem.
\vspace{5pt}

\noindent\textbf{\textit{Proof of Theorem \ref{Theorem_second_solution_subcritical}:}} We first fix some $\la\in (0, \Lambda)$. Since $r> \max\{p-1, q-1 \}$, if $v$ is any function in $\xo(\Om)$ for which $\ti (v) \not= 0$, we have $\lim_{t \rar\infty} \ti (tv) = -\infty$. Again, by lemma \ref{multiplicity_local minimizer}, $u_{\la}$ minimizes $\ti$ locally in $\xo(\Om)$. Therefore, the functional $\ti$ has a mountain pass geometry near $u_{\la}$. We choose $\tilde{v} \in \xo(\Om)\setminus \{0\}$, $\tilde{v} \geq 0$ such that $\ti(\tilde{v}) < \ti (u_{\la})$, define $R_0:= \|\tilde{v} - u_{\la} \|_{\xo(\Om)}$ and fix $R_1 >0$ sufficiently small so that $u_{\la}$ minimizes $\ti$ on the ball $\overline{B_{R_1}(u_{\la})}$. Further, we define the mountain pass level
\[c_0 = \inf_{\gamma \in \Gamma}\ \max_{t\in [0,1]}\ \ti \left(\gamma(t) \right), \]
where $\Gamma:= \left\{ \gamma \in C \left([0,1],\ \xo(\Om) \right): \gamma(0)= u_{\la},\ \gamma(1)= \tilde{v} \right\}$ is the collection of all continuous paths from $u_{\la}$ to $\tilde{v}$. Now, we consider the following two possible cases:
\begin{enumerate}
    \item there exists $R'< R_0$ for which
    \[\inf \left\{ \ti(v): v\in \xo(\Om),\ \| v- u_{\la}\|_{\xo(\Om)} =R' \right\} > \ti (u_{\la}),\]

    \item for every $R<R_0$,
    \[\inf \left\{ \ti(v): v\in \xo(\Om),\ \| v- u_{\la}\|_{\xo(\Om)} =R \right\} \leq \ti (u_{\la}).\]
\end{enumerate}

\noindent\textit{\textbf{Existence of a second positive solution in case (1).}} By lemma \ref{PS lemma}, applying the mountain pass theorem, we obtain a critical point $v_{\la}$ of $\ti$ such that
\[\ti(v_{\la}) = c_0 \geq \inf \left\{ \ti(v): v\in \xo(\Om),\ \| v- u_{\la}\|_{\xo(\Om)} =R' \right\} > \ti (u_{\la}). \]
Therefore, clearly, $u_{\la} \not= v_{\la}$ and $v_{\la}$ is a solution of 
\begin{equation}
    -\p v_{\la} + \qs v_{\la} = h_{\la} (x, v_{\la}) \text{ in } \Om.
\end{equation}
Again, using weak comparison principle (similar to the last part of lemma \ref{PS_sequence_bounded}), we get $v_{\la} \geq \underline{u}_{\la}$ a.e. in $\Om$ so that $v_{\la}$ is, in fact, a solution of \eqref{P_lambda}.
\vspace{5pt}

\noindent\textit{\textbf{Existence of second positive solution in case (2).}} In this zero altitude case, we take $\mathcal{F} = \pa B_{R} (u_{\la})$, $R<R_1$ and following Theorem 1 of Ghoussoub-Preiss \cite{ghoussoub1989mountain}, we obtain a Palais-Smale sequence $\{v_n\}$ for $\ti$ at level $c_0$ around $\mathcal{F}$, that is, $\{v_n\}$ is a sequence in $\xo(\Om)$ satisfying
\[\lim_{n\rar\infty} dist\ (v_n, \mathcal{F}) =0,\ \lim_{n\rar\infty} \underline{I}_{\la} (v_n) = c_0,\ \lim_{n\rar\infty} \underline{I}_{\la} (v_n) =0.\]
Applying lemma \ref{PS lemma}, we obtain $v_{\la}$ such that upto a subsequence $v_n \rar v_{\la}$ as $n\rar\infty$ and $v_{\la}$ is a solution of \eqref{P_lambda}. Further, as $\lim_{n\rar\infty} dist\ (v_n, \pa B_R (u_{\la})) =0$ and $v_n \rar v_{\la}$ strongly in $\xo(\Om)$, we obtain $\|v_{\la} - u_{\la}\|_{\xo(\Om)}=R$. Hence $u_{\la} \not= v_{\la}$ and this completes the proof.\hfill\qed

Next, we move on to obtaining a second solution in the critical case, that is, when $r=r^*$. To this end, we fix $y\in\Om$, $r>0$ small enough and a cut-off function $\phi\in C_c^{\infty} \left(B_{2r}(y) \right)$ satisfying $0\leq \phi \leq 1$ and $\phi \equiv 1$ on $B_r(y)$. Consider the family of Talenti functions
\begin{equation}
    V_{\varepsilon} (x) = C_{N,p} \frac{\varepsilon^{\frac{N-p}{p(p-1)}}}{\left( \varepsilon^{\frac{p}{p-1}} + |x-y|^{\frac{p}{p-1}} \right)^{\frac{N-p}{p}}},\ \varepsilon>0
\end{equation}
where $C_{N,p}$ is a normalization constant and define $U_{\varepsilon} := V_{\varepsilon} \phi$. Then, clearly the functions $\ue$ are supported in $\Om$, or $\ue =0$ in $\bdry$. Now, we derive an energy estimate, which plays a crucial role in verifying that the mountain pass level is consistent with the local Palais-Smale condition.

\begin{lemma}\label{estimate_for_talenti_functions}
    Assume that $2\leq q\leq p$ with $p \in \left( \frac{3N}{N+3},\ 3 \right)$,  
    and
    \[0<s< 1- \frac{1}{q} \left[ \frac{N-p}{p-1} - N \left( 1- \frac{q}{p} \right) \right]. \]
    Then there exist $\varepsilon_0 >0$ and $R_0 \geq 1$ such that for every $\varepsilon \in (0, \varepsilon_0)$, the following hold:
    \begin{itemize}
        \item[\textit{(a)}] $\ti \left(u_{\la} + R\ue \right) = I_{\la} \left( u_{\la} + R\ue\right) < I_{\la} \left( u_{\la} \right)$, for all $R\geq R_0$,
        \item[\textit{(b)}] $\ti \left(u_{\la} + tR_0 \ue \right) = I_{\la} \left( u_{\la} + t R_0 \ue\right) < I_{\la} \left( u_{\la} \right) + \frac{1}{N} S_0^{N/p}$, for all $t\in[0,1]$.
    \end{itemize}
\end{lemma}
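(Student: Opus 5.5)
The equalities $\ti(u_\la + R\ue) = I_\la(u_\la+R\ue)$ hold because $u_\la + R\ue \geq u_\la \geq \ul_\la$, so $h_\la$ and the true nonlinearity agree on the relevant range. Both parts then reduce to an asymptotic expansion of
\[
g(t):= I_\la(u_\la + t\ue) - I_\la(u_\la), \qquad t\ge 0,
\]
as $\varepsilon\to0$, uniformly in $t$ on compact sets. I will split $g$ into four contributions: the $p$-Laplacian part, the fractional $q$-part, the singular part and the critical part. In each I subtract the terms linear in $t\ue$, using the weak formulation of $(P_\la)$ tested with $\ue$; this is legitimate since $u_\la$ is a weak solution and $\ue\in C_c^\infty$.

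\textbf{Expansions of the local and critical parts.} For the $p$-Laplacian part I use the elementary inequalities valid for $p\in(\frac{3N}{N+3},3)$ (the range in which the Brezis--Nirenberg type expansion of García Azorero--Peral works). In the form used in \cite{garcia2000minimizer}, they read
\[
|a+b|^p \le |a|^p + |b|^p + p|a|^{p-2}a\cdot b + C\big(|a|^{p-2}|b|^2 + |a||b|^{p-1}\big)
\]
for $p\ge2$, with an analogous version for $p<2$. For the critical term I use the reverse inequality $(a+b)^{p^*}\ge a^{p^*}+b^{p^*}+p^*a^{p^*-1}b + p^* a b^{p^*-1}$ with $a,b\ge0$. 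Combining these with the equation for $u_\la$, I expect
\begin{align*}
g(t) &\le \frac{t^p}{p}\|\ue\|_{1,p}^p - \frac{t^{p^*}}{p^*}\|\ue\|_{p^*}^{p^*} + \frac{t^q}{q}[\ue]_{s,q}^q + C\,t^{p-1}\!\int_\Om |\na u_\la||\na \ue|^{p-1}\\
&\quad - t^{p^*-1}\!\int_\Om u_\la \ue^{p^*-1} + (\text{nonlocal cross terms}),
\end{align*}
plus a singular term that is $\le 0$ by concavity of $t\mapsto t^{1-\delta}$. Since $u_\la\ge c>0$ on $B_{2r}(y)$ and $u_\la\in L^\infty$, the standard Talenti estimates give:
\begin{itemize}
\item[(i)] $\|\ue\|_{1,p}^p = S_0^{N/p}+O(\varepsilon^{(N-p)/(p-1)})$ and $\|\ue\|_{p^*}^{p^*}=S_0^{N/p}+O(\varepsilon^{N/(p-1)})$;
\item[(ii)] $\int u_\la \ue^{p^*-1}\ge c\,\varepsilon^{(N-p)/p(p-1)}$;
\item[(iii)] $\int |\na \ue|^{p-1} = O(\varepsilon^{(N-p)/p})$.
\end{itemize}
The condition $p>\frac{3N}{N+3}$ is exactly what makes (iii), and the $|a|^{p-2}|b|^2$-type error, of higher order than the negative gain in (ii).

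\textbf{Nonlocal term.} This is the main obstacle, because the operator is not scale-invariant. Since $2\le q$, I will expand $[u_\la+t\ue]_{s,q}^q$ with the $q$-version of the inequality above. The linear term is absorbed by the equation. The cross terms are bounded by Hölder's inequality in terms of $[u_\la]_{s,q}^{q-1}[\ue]_{s,q}$ and $[u_\la]_{s,q}[\ue]_{s,q}^{q-1}$; here $[u_\la]_{s,q}<\infty$, and $s<1/q'$ together with $u_\la\in C^{1,\alpha}$ can be used to bound $\qs u_\la$ pointwise, as in Claim~2. The key estimate is on $[\ue]_{s,q}^q$ itself. Interpolating between $\|\ue\|_{q}$ and $\|\na\ue\|_{q}$, i.e. $[\ue]_{s,q}^q\le C\|\ue\|_q^{q(1-s)}\|\na\ue\|_q^{qs}$, and computing the Talenti $L^q$ norms for $q\le p$, I expect
\[
[\ue]_{s,q}^q = O\big(\varepsilon^{\,q(1-s) - q\left[\frac{N-p}{p}\cdot\frac{1}{p-1}\cdot\frac{p}{q}\cdots\right]}\big),
\]
more precisely an exponent of the form $N - q\frac{N-p}{p} - qs$ up to the $(p-1)$ corrections. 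The assumed upper bound
\[
s<1-\frac1q\Big[\frac{N-p}{p-1}-N\Big(1-\frac qp\Big)\Big]
\]
should be exactly the condition that this exponent exceeds $\frac{N-p}{p(p-1)}$, the rate of the gain (ii). I will first try the interpolation bound above; if it is too lossy, I will split the double integral into near-diagonal and far regions directly.

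\textbf{Conclusion.} Part (a) is immediate: for fixed small $\varepsilon$, the $-t^{p^*}$ term dominates, so $g(t)\to-\infty$, and $R_0$ can be chosen uniformly for $\varepsilon<\varepsilon_0$ using the two-sided bounds in (i). For (b), write $g(t)\le \frac{t^p}{p}A_\varepsilon-\frac{t^{p^*}}{p^*}B_\varepsilon - c\,t^{p^*-1}\varepsilon^{\frac{N-p}{p(p-1)}}+o(\varepsilon^{\frac{N-p}{p(p-1)}})$. The maximum is then attained at $t$ bounded away from $0$ and $\infty$ uniformly in $\varepsilon$; small $t$ is handled separately since $g(t)\to0$ as $t\to0$. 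This gives
\[
\max_t g \le \tfrac1N S_0^{N/p} - c\,\varepsilon^{\frac{N-p}{p(p-1)}}+o\big(\varepsilon^{\frac{N-p}{p(p-1)}}\big) < \tfrac1N S_0^{N/p}
\]
for $\varepsilon$ small.
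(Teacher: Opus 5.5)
Your plan matches the paper's: expand $I_\la(u_\la+t\ue)-I_\la(u_\la)$, cancel the linear terms through the equation, and dominate every remainder by the gain $t^{p^*-1}\int_\Om u_\la\ue^{p^*-1}dx$. The bookkeeping, however, fails at the decisive point.

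\textbf{The gain has the wrong exponent, and your cross terms are not negligible.} Write $V_\varepsilon=\varepsilon^{-\frac{N-p}{p}}V_1(\frac{\cdot-y}{\varepsilon})$, which is the normalization of your (i). Then $\frac{N-p}{p}(p^*-1)=N-\frac{N-p}{p}$, and $V_1^{p^*-1}(z)\sim|z|^{-N-\frac{p}{p-1}}$ is integrable. Hence $\int_\Om u_\la\ue^{p^*-1}dx\asymp\varepsilon^{\frac{N-p}{p}}$, which is the paper's $k_4\varepsilon^{\frac{N-p}{p}}$. The exponent $\frac{N-p}{p(p-1)}$ in your (ii) is that of $\int\ue$, and for $p>2$ your lower bound is false. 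With the correct gain, your own (iii) breaks the argument. The remainder $C|a||b|^{p-1}$ of your elementary inequality produces $Ct^{p-1}\int_\Om|\na u_\la||\na\ue|^{p-1}dx$. Since $|\na\ue|^{p-1}\approx c\,\varepsilon^{\frac{N-p}{p}}|x-y|^{1-N}$ for $\varepsilon\ll|x-y|<r$, this term has exactly the order of the gain, with an uncontrollable constant $C$. For $p$ near $2$ the $|a|^{p-2}|b|^2$ term is worse still: at $p=2$ it equals $Ct^2\|\ue\|_{1,2}^2\asymp 1$, which destroys the sharp constant needed in (b).

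The nonlocal cross terms fail the same way. Since $[\ue]_{s,q}^q\gtrsim\varepsilon^{\frac{q(N-p)}{p(p-1)}}$ from the cut-off region, your H\"older bound $[u_\la]_{s,q}[\ue]_{s,q}^{q-1}$ is at least of order $\varepsilon^{\frac{(q-1)(N-p)}{p(p-1)}}\gtrsim\varepsilon^{\frac{N-p}{p}}$ whenever $q\le p$.

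The missing idea is the Garc\'{\i}a Azorero--Peral inequality (Lemma A.4 of \cite{garciaperal1994critical}), which has a single interpolated remainder $C|a|^{p-\zeta}|b|^{\zeta}$ with $\zeta\in[p-1,2]$. Take $p-1<\zeta_1<\min\{2,\frac{N(p-1)}{N-1}\}$, which is possible precisely because $p<3$. Then the far field dominates and $\int|\na\ue|^{\zeta_1}\lesssim\varepsilon^{\zeta_1\frac{N-p}{p(p-1)}}=o(\varepsilon^{\frac{N-p}{p}})$. Likewise, take $\max\{p-1,q-1\}<\zeta_2<2$ in the Gagliardo expansion and estimate the resulting double integral directly (Lemma 7.5 of \cite{dhanya2025multiplicityresultsmixedlocal}), not via H\"older against $[u_\la]_{s,q}$.

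\textbf{Further corrections.}
\begin{itemize}
\item The singular remainder has the opposite sign to what you state. Once $\la\int u_\la^{-\delta}t\ue$ is cancelled through the equation, concavity of $\tau\mapsto\tau^{1-\delta}$ gives $L_2=-\frac{\la}{1-\delta}\int_\Om[(u_\la+t\ue)^{1-\delta}-u_\la^{1-\delta}-(1-\delta)u_\la^{-\delta}t\ue]dx\ge0$. So it works against you and must be shown to be $o(\varepsilon^{\frac{N-p}{p}})$. This is true: $u_\la\ge c>0$ on $\operatorname{supp}\ue$, so the integrand is $\lesssim(t\ue)^{\tau}$ for every $\tau\in[1,2]$. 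Any $\tau\in[1,2]\cap(p-1,p^*-1)$ then gives $\int\ue^{\tau}=o(\varepsilon^{\frac{N-p}{p}})$.
\item The condition $p>\frac{3N}{N+3}$ is equivalent to $p^*>3$. That is what validates your lower bound for $(a+b)^{p^*}$; it plays no role in (iii).
\item What the nonlocal term requires is $[\ue]_{s,q}^q=o(\varepsilon^{\frac{N-p}{p}})$. The hypothesis on $s$ reads $N(1-\frac qp)+q(1-s)>\frac{N-p}{p-1}$, not the condition you guessed.
\item Your interpolation bound $\|\ue\|_q^{q(1-s)}\|\na\ue\|_q^{qs}$ is lossy when $\|\ue\|_q$ is governed by the tail of $V_1$. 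For $p=q$ it only covers $s<\frac1p$, strictly less than the assumed range, e.g.\ for $N=3$ and $p=q\in(2,3)$. So the direct near/far splitting you keep as a fallback is genuinely needed.
\end{itemize}
Once these points are repaired, your concluding step for (a) and for the maximum over $t$ in (b) works as you describe, with $\varepsilon^{\frac{N-p}{p}}$ in place of $\varepsilon^{\frac{N-p}{p(p-1)}}$.
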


\begin{proof}
    For any $R\geq 1$, we have
    \begin{multline}\label{big_expression}
        I_{\la} \left( u_{\la} + tR\ue\right) = \frac{1}{p} \| u_{\la} + tR\ue\|_{1,p}^p + \frac{1}{q} [u_{\la} + tR\ue ]_{s,q}^q\\
        - \frac{\la}{1-\delta} \int_{\Om} (u_{\la} + tR\ue)^{1-\delta}\ dx - \frac{1}{p^*} \|u_{\la} + tR\ue\|_{p^*}^{p^*}.
    \end{multline}
    Using the well-known one-dimensional inequality given in lemma A.4 of \cite{garciaperal1994critical} and H\"older's inequality, we obtain the following estimates:
    \begin{multline}\label{sub_1}
        \|u_{\la} + tR\ue\|_{1,p}^p \leq \|u_{\la}\|_{1,p}^p + (tR)^p \|\ue\|_{1,p}^p + k_1 (tR)^{\zeta_1} \int_{\Om} |\na u_{\la}|^{p-\zeta_1} |\na \ue |^{\zeta_1}\ dx\\
        + p (tR) \int_{\Om} |\na u_{\la}|^{p-2} \na u_{\la} \cdot \na \ue\ dx,
    \end{multline}
    and
    \begin{multline}\label{sub_2}
        [u_{\la} + tR\ue]_{s,q}^q \leq [u_{\la}]_{s,q}^q + (tR)^q [\ue]_{s,q}^q\\
        + k_2 (tR)^{\zeta_2} \int_{\{u_{\la}(x) - u_{\la}(y) \not= 0\}} \frac{|u_{\la} (x) - u_{\la} (y)|^{q-\zeta_2} |\ue (x) - \ue (y)|^{\zeta_2}}{|x-y|^{N+qs}}\ dx\ dy \\
        + q (tR) \int_{\sr^N \times \sr^N} \frac{\mathcal{G} u_{\la} (x,y) \left( \ue (x) - \ue (y) \right)}{|x-y|^{N+qs}}\ dx\ dy,
    \end{multline}
    where $\zeta_1 \in [p-1, 2]$ and $\zeta_2 \in [q-1,2]$. From lemma A.5 of \cite{garciaperal1994critical}, we have
    \[\int_{\Om} |\na \ue|^{m_1}\ dx \leq c_1 \varepsilon^{\frac{N-p}{p(p-1)}m_1}, \text{ if } 1\leq m_1 < \frac{N(p-1)}{N-1}. \]
    Since $p-1 < \frac{N(p-1)}{N-1}$, we fix $\zeta_1$ such that $p-1 < \zeta_1 < \min \left\{ 2,\ \frac{N(p-1)}{N-1} \right\}$. Then, using the regularity of $u_{\la}$, we get that
    \begin{equation}\label{sub_3}
        k_1 (tR)^{\zeta_1} \int_{\Om} |\na u_{\la}|^{p-\zeta_1} |\na \ue |^{\zeta_1}\ dx \leq k_3 (tR)^{\zeta_1} \varepsilon^{\frac{N-p}{p(p-1)}\zeta_1} = o\left( \varepsilon^{\frac{N-p}{p}} \right).
    \end{equation}
    For the analogous nonlocal term, we use lemma 7.5 of \cite{dhanya2025multiplicityresultsmixedlocal}, which gives
    \[ \int_{\sr^N \times \sr^N} \frac{|\ue (x) - \ue (y)|^{m_2}}{|x-y|^{N+qs}}\ dx\ dy \leq c_2 \varepsilon^{\min \left\{N\left(1-\frac{m_2}{p}\right) + m_2 (1-s),\ \frac{m_2(N-p)}{p(p-1)} \right\}},\]
    for $1\leq m_2 \leq p$. Thus, we choose $\zeta_2$ such that $\max \{p-1,\ q-1 \} < \zeta_2 < \min \left\{ 2,\ \frac{N(p-1) +p}{N-p(1-s)} \right\}$, which ensures that
    \begin{equation}\label{sub_4}
        k_2 (tR)^{\zeta_2} \int_{\{u_{\la}(x) - u_{\la}(y) \not= 0\}} \frac{|u_{\la} (x) - u_{\la} (y)|^{q-\zeta_2} |\ue (x) - \ue (y)|^{\zeta_2}}{|x-y|^{N+qs}}\ dx\ dy = o\left( \varepsilon^{\frac{N-p}{p}} \right).
    \end{equation}
    Since $u_{\la}$ is a weak solution of \eqref{P_lambda}, substituting \eqref{sub_1}, \eqref{sub_2}, \eqref{sub_3} and \eqref{sub_4} in \eqref{big_expression}, it follows that
    \begin{multline}\label{big_expression_red_1}
        I_{\la} \left( u_{\la} + tR\ue\right) \leq I_{\la} (u_{\la}) + \left[ \frac{(tR)^p}{p} \|\ue\|^p_{1,p} + \frac{(tR)^q}{q} [\ue]^q_{s,q} \right] + o\left( \varepsilon^{\frac{N-p}{p}} \right)\\
        -\frac{(tR)^{p^*}}{p^*} \|\ue\|^{p^*}_{p^*} -(tR)^{p^* -1} \int_{\Om} \ue^{p^* -1} u_{\la}\ dx + L_1 + L_2,
    \end{multline}
    where
    \begin{align}
        L_1 &= -\frac{1}{p^*} \int_{\Om} \left[ (u_{\la} + tR \ue)^{p^*} - u_{\la}^{p^*} - tR \ue^{p^*} - p^* tR u_{\la} \ue \left( u_{\la}^{p^* -2} + \ue^{p^* -2} \right) \right] dx,\nonumber\\
        \text{and } L_2 &= -\frac{\la}{1-\delta} \int_{\Om} \left[ (u_{\la} + tR \ue)^{1-\delta} - u_{\la}^{1-\delta} - (1-\delta) tR u_{\la}^{-\delta} \ue \right] dx.\nonumber
    \end{align}
    Now, from \cite{garciaperal1994critical} (page 947), we have
    \begin{equation}
        \|\ue\|^p_{1,p} = K_1 + O \left(\varepsilon^{\frac{N-p}{p-1}} \right),\ \|\ue\|^{p^*}_{p^*} = K_2 + O \left(\varepsilon^{\frac{N}{p-1}} \right), \text{ where } K_1 = S_0 K_2^{p/p^*}.
    \end{equation}
    Since $0<s< 1- \frac{1}{q} \left[ \frac{N-p}{p-1} - N \left( 1- \frac{q}{p} \right) \right]$, using lemma 7.5 of \cite{dhanya2025multiplicityresultsmixedlocal}, we get
    \begin{equation}
        [\ue]^q_{s,q} = o\left( \varepsilon^{\frac{N-p}{p}} \right).
    \end{equation}
    As $u_{\la} > 0$ in $\Om$, proceeding similar to \cite{bhakta2025quasilinearproblemsmixedlocalnonlocal} (pages 19 and 22), we obtain
    \begin{equation}
        \int_{\Om} \ue^{p^* -1} u_{\la}\ dx \geq k_4 \varepsilon^{\frac{N-p}{p}}.
    \end{equation}
    Finally, from the estimates proved in \cite{garciaperal1994critical} (pages 946 and 949) and \cite{giacomoni2007sobolev} (pages 137-138), we get
    \begin{equation}
        L_1 + L_2 =  o\left( \varepsilon^{\frac{N-p}{p}} \right).
    \end{equation}
    Consequently, \eqref{big_expression_red_1} simplifies to
    \begin{equation}
        I_{\la} \left( u_{\la} + tR\ue\right) \leq I_{\la} (u_{\la}) + \frac{(tR)^p}{p} K_1 -\frac{(tR)^{p^*}}{p^*} K_2 - (tR)^{p^* -1} k_4 \varepsilon^{\frac{N-p}{p}} + o\left( \varepsilon^{\frac{N-p}{p}} \right).
    \end{equation}
    It is evident from the above inequality that for $t=1$, there exist $\varepsilon_0$ sufficiently small and $R_0$ sufficiently large such that for all $\varepsilon \in (0, \varepsilon_0)$ and $R\geq R_0$, we have
    \[ \frac{R^p}{p} K_1 -\frac{R^{p^*}}{p^*} K_2 - R^{p^* -1} k_4 \varepsilon^{\frac{N-p}{p}} + o\left( \varepsilon^{\frac{N-p}{p}} \right) < 0. \]
    As a result, $I_{\la} \left( u_{\la} + R\ue\right) < I_{\la} (u_{\la})$, for all $\varepsilon \in (0, \varepsilon_0)$ and $R\geq R_0$, proving part $(a)$ of the result. Finally, proceeding similar to \cite{bhakta2025quasilinearproblemsmixedlocalnonlocal} (page 22), we establish part (b), i.e.,
    \[I_{\la} \left( u_{\la} + t R_0 \ue\right) < I_{\la} \left( u_{\la} \right) + \frac{1}{N} S_0^{N/p}, \text{ for all } t\in[0,1]. \]
     This completes the proof.
\end{proof}

\begin{remark}
    When $p=2$, the condition $2 \leq q \leq p$ leads to $q=2$ as well. The corresponding admissible range for $s$ then becomes $0<s< \frac{4-N}{2}$, which is compatible only when $N<4.$ This  yields $0<s<\frac{1}{2}$ when $N=3.$ Notably, this coincides with the range obtained by Biagi et. al. \cite{biagi2024critical} in the linear case, thereby demonstrating the consistency of our approach while at the same time extending the conclusions to the nonlinear case.
\end{remark}

\noindent\textbf{\textit{Proof of Theorem \ref{Theorem_second_solution_critical}:}} We proceed similar to the proof of Theorem \ref{Theorem_second_solution_subcritical}. Here, thanks to part $(a)$ of lemma \ref{estimate_for_talenti_functions}, we choose $\tilde{v} = u_{\la} + R_0 \ue$, where $R_0$ is obtained in lemma \ref{estimate_for_talenti_functions}. We define the mountain pass level
\[c_0 = \inf_{\gamma \in \Gamma}\ \max_{t\in [0,1]}\ \ti \left(\gamma(t) \right), \]
where $\Gamma:= \left\{ \gamma \in C \left([0,1],\ \xo(\Om) \right): \gamma(0)= u_{\la},\ \gamma(1)= \tilde{v}= u_{\la} + R_0 \ue \right\}$ is the collection of all paths from $u_{\la}$ to $\tilde{v} = u_{\la} + R_0 \ue$. Again, we have the following two possible cases:
\begin{enumerate}
    \item there exists $R'< R_0$ for which
    \[\inf \left\{ \ti(v): v\in \xo(\Om),\ \| v- u_{\la}\|_{\xo(\Om)} =R' \right\} > \ti (u_{\la}),\]

    \item for every $R<R_0$,
    \[\inf \left\{ \ti(v): v\in \xo(\Om),\ \| v- u_{\la}\|_{\xo(\Om)} =R \right\} \leq \ti (u_{\la}).\]
\end{enumerate}
In case (1), since $\gamma (t) = u_{\la} + t R_0 \ue$, $t\in[0,1]$, is a path from $u_{\la}$ to $u_{\la} + R_0 \ue$, it follows from part $(b)$ of lemma \ref{estimate_for_talenti_functions} that
\[c_0 \leq \max_{t\in [0,1]}\ \ti \left(\gamma (t) \right) = \max_{t\in [0,1]}\ \ti \left(u_{\la} + t R_0 \ue \right) < \ti \left( u_{\la} \right) + \frac{1}{N} S_0^{N/p}.\]
Therefore, in view of lemma \ref{PS lemma}, we are in a position to apply the mountain pass theorem and obtain a critical point $v_{\la}$ for $\ti$. The condition
\[\ti(v_{\la}) = c_0 \geq \inf \left\{ \ti(v): v\in \xo(\Om),\ \| v- u_{\la}\|_{\xo(\Om)} =R' \right\} > \ti (u_{\la})\]
further ensures that $u_{\la} \not= v_{\la}$.

In case (2), following arguments similar to those in Theorem \ref{Theorem_second_solution_subcritical}, we obtain a second critical point $v_{\la}$ for $\ti$. Now, it only remains to show that $v_{\la}$ solves \eqref{P_lambda}. As $\ti$ is a $C^1$-functional on $\xo(\Om)$ and $v_{\la}$ is a critical point of $\ti$, $v_{\la}$ is a weak solution of
\begin{equation}
    -\p v_{\la} + \qs v_{\la} = h_{\la} (x, v_{\la}) \text{ in } \Om.
\end{equation}
Using weak comparison principle (similar to the last part of lemma \ref{PS_sequence_bounded}), we get $v_{\la} \geq \underline{u}_{\la}$ a.e. in $\Om$ and hence $v_{\la}$ is a solution of \eqref{P_lambda}. This completes the proof.\hfill\qed


\section*{Acknowledgements}
R. Dhanya was supported by SERB MATRICS grant MTR/2022/000780 and Sarbani Pramanik was supported by Prime Minister's Research Fellowship.


\appendix

\section{Infinite semipositone problem}\label{appendix_infinite_semipositone}

In this section, we prove Proposition \ref{infinite_semipositone_existence}, that is, the existence of a positive solution to the following infinite semipositone problem:
\begin{equation}\tag{$Q_{\theta}$}
\begin{aligned}
    -div \left(A_0 (x) \na z \right) &= z^{\si} - \frac{\theta}{z^{\gamma}} \text{ in } U\\
    z > 0 \text{ in } U,\ z &= 0 \text{ on } \pa U
\end{aligned}
\end{equation}
where $U$ is a smooth, bounded domain in $\sr^N$, $\si, \gamma \in (0, 1)$ and $\theta>0$ is small. The matrix $A_0(x) = \left(a^{(0)}_{ij}(x)\right)_{N\times N}$ is symmetric and uniformly positive definite on $U$, with coefficients $a^{(0)}_{ij}$ $(i,j = 1,2,\dots, N)$ being H\"older continuous in $U$. Thus, for every $x\in U$, the maps $(\xi, \eta) \mapsto \langle \xi, \eta \rangle_{A_0}:= A_0(x) \xi \cdot \eta$ and $\xi \mapsto |\xi|_{A_0}:= \sqrt{A_0(x)\xi \cdot \xi}$, where $\xi, \eta \in \sr^N$, respectively define an inner product and the corresponding norm on $\sr^N$.

Before proceeding, we introduce the relevant function spaces. For the distance function $d(x)$ in $U$, we define the space
\[C_{d} (U): = \left\{ u\in C_0 (\overline{U}) : \exists\ c>0 \text{ such that } |u(x)|\leq c d(x), \forall\ x\in U \right\},\]
which is an ordered Banach space endowed with the norm $\|\frac{u}{d}\|_{\infty}.$ The positive cone of $C_{d} (U)$ is
\[C_d^+ (U): = \left\{ u\in C_d (U) : u \geq 0 \text{ in } U \right\},\]
which has a non-empty interior, given by
\[int \ C_d^+ (U): = \left\{ u\in C_d (U) : \inf_{x\in U} \frac{u(x)}{d(x)} >0 \right\}.\]

We establish the existence of a positive solution to \eqref{Q theta} which belongs to the interior of $C_d^+(U).$  We employ a bifurcation argument inspired by  \cite{giacomoni2019existence} to prove the existence of such a solution. At first, we consider the sublinear problem, corresponding to the case of $\theta = 0$, given by
\begin{equation}\tag{$Q_0$}\label{Q zero}
    \begin{aligned}
        -div \left(A_0 (x) \na z \right) &= z^{\si} \text{ in } U\\
        z >0 \text{ in } U,\ z &= 0 \text{ on } \pa U.
    \end{aligned}
\end{equation}
Since the operator is uniformly elliptic in $U$, the standard theory of general elliptic operators is readily applicable. As a result, \eqref{Q zero} admits a positive solution $z_0 \in C^{1, \alpha} (\overline{U})$, for some $\alpha \in (0,1)$ and by Theorem 1.1 of \cite{de2015hopf}, $\frac{\pa z_0}{\pa\nu}<0$ on $\pa U$, i.e., $z_0 \in \text{int} \ C_d^+ (U)$, where $\nu$ denotes the outward normal direction. Using generalized Picone's identity from Proposition 3.1, \cite{pinchover2015criticality}, we can show that the obtained solution $z_0$ of \eqref{Q zero} is unique. 

We also consider singular problems of the form
\begin{equation}\label{aux equn singular}
    \begin{aligned}
        -div \left(A_0(x) \na z \right) &= h(x) \text{ in } U\\
        z &= 0 \text{ on } \pa U
    \end{aligned}
\end{equation}
where $h\in L^{\infty}_{loc}(U)$ and satisfies the following growth condition:
\[\exists\ c>0 \text{ and } 0<\varrho<1 \text{ such that } 0\leq h(x) \leq \frac{c}{d(x)^{\varrho}}, \text{ for a.e. } x\in U.\]
The equation \eqref{aux equn singular} admits a unique solution $z$ in $H^1_0(U)$. Additionally, if there exists $C>0$ such that $0\leq z(x) \leq Cd(x)$ a.e. in $U$, then following Theorem B.1 of \cite{giacomoni2007sobolev}, the solution $z \in C^{1, \alpha} (\overline{U})$, for some $\alpha \in (0,1)$.

Now, we fix $\mu>0$ and choose $\epsilon >0$ sufficiently small such that $B_{\epsilon} (z_0) \subset \text{int}\ C_d^+ (U)$, where $\ball$ is the open ball in $C_d^+ (U)$ with center at $z_0$ and radius $\epsilon$. We define a solution operator corresponding to \eqref{Q theta}. For convenience, we define $\mathcal{L}_0 z := -div \left(A_0 (x) \na z \right)$ and by writing $z= \li (w)$, we mean $z$ solves $\mathcal{L}_0 z = w$ in $U$ with $z=0$ on $\pa U$.

\begin{definition}\label{definition_of_S}
For every $\theta\in (-\mu,\mu)$ and $z\in B_\epsilon(z_0)$  we  define the solution operator $S : \left\{|\theta| < \mu \right\} \times B_{\epsilon} (z_0) \rar C_d (U) $ by $\displaystyle{S(\theta, z):= z - \mathcal{L}_0^{-1} \left(z^{\si} - \frac{\theta}{z^{\gamma}} \right)}$.
\end{definition}
From the above discussion, clearly $\li \left(z^{\si}\right),\ \li \left( \frac{1}{z^{\gamma}} \right) \in C_d (U)$ for every $z\in \ball$. Thus, the map $S$ is well-defined. We first prove the Fr\'echet differentiability of the map $S$. The central idea of the proof follows the approach of \cite{giacomoni2019existence}. However, due to the difference in the operator, we use a slightly modified argument for the convergence of the sequences of the form $\{\li (z_k)\}$, as compared to \cite{giacomoni2019existence}.

\begin{lemma}\label{G cont diff}
    The map $S: \{|\theta| < \mu \} \times B_{\epsilon} (z_0) \rar C_d (U)$ is continuously Fr\'echet differentiable.
\end{lemma}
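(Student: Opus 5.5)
Since $S(\theta,z) = z - \li(z^{\si}) + \theta\,\li(z^{-\gamma})$ and the first term is a bounded linear map, it suffices to show that the Nemytskii-type maps
\[
T_1(z):=\li\left(z^{\si}\right), \qquad T_2(z):=\li\left(z^{-\gamma}\right)
\]
are $C^1$ from $\ball$ into $C_d(U)$. The map $(\theta,z)\mapsto \theta\,T_2(z)$ is then $C^1$ by the product rule. The candidate derivatives are
\[
T_1'(z)h = \si\,\li\left(z^{\si-1}h\right), \qquad T_2'(z)h = -\gamma\,\li\left(z^{-\gamma-1}h\right).
\]
Consequently $D_\theta S = \li(z^{-\gamma})$ and $D_z S\,h = h - \si\,\li(z^{\si-1}h) - \gamma\theta\,\li(z^{-\gamma-1}h)$.

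The basic tool is a uniform estimate for $\li$ on singular right-hand sides. Since $z\in\ball\subset\intcd$, there is $c>0$ with $c\,d\le z\le C d$ uniformly on the ball (shrinking $\epsilon$ if necessary). Hence for $h\in C_d(U)$,
\[
|z^{\si-1}h|\le C\|h/d\|_\infty\, d^{\si}, \qquad |z^{-\gamma-1}h|\le C\|h/d\|_\infty\, d^{-\gamma}.
\]
Both right-hand sides are bounded by $c\,d^{-\varrho}$ with $\varrho=\gamma<1$. I would first prove a lemma stating that $\|\li(g)/d\|_\infty\le C\,\|d^{\varrho}g\|_\infty$ for such $g$. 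The natural route is to compare $\pm\li(g)$ with $\|d^\varrho g\|_\infty\,\phi_\varrho$, where $\phi_\varrho:=\li(d^{-\varrho})$. One shows $\phi_\varrho\le C d$ by building an explicit supersolution near $\pa U$ of the form $d - d^{2-\varrho}$ (computing $\mathcal{L}_0$ of powers of the smooth distance function, using the H\"older coefficients only through the leading term). The weak maximum principle for $\mathcal{L}_0$ in $H^1_0(U)$ then gives the bound. This lemma shows that $\li$ is a bounded linear operator from the weighted space $\{g: d^\varrho g\in L^\infty\}$ into $C_d(U)$, so the candidate derivatives are bounded linear maps.

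Fr\'echet differentiability then follows from a Taylor estimate at the level of right-hand sides. For $T_2$ the mean value theorem gives
\[
\left|(z+h)^{-\gamma}-z^{-\gamma}+\gamma z^{-\gamma-1}h\right|\le C\,\xi^{-\gamma-2}h^2\le C\,d^{-\gamma}\,\|h/d\|_\infty^2,
\]
with $\xi$ between $z$ and $z+h$, so $\xi\sim d$. Applying the lemma yields a remainder $O(\|h\|^2_{C_d})$, and $T_1$ is handled identically. For continuity of $z\mapsto T_2'(z)$ in operator norm, I would bound
\[
d^{\gamma}\left|z_k^{-\gamma-1}-z^{-\gamma-1}\right|\,|h|\le C\,\|h/d\|_\infty\,\|(z_k-z)/d\|_\infty
\]
by the same mean value argument, which is uniform in $\|h\|_{C_d}\le1$. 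Continuity in $\theta$ is trivial.

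I expect the main obstacle to be the weighted lemma $\|\li(g)/d\|_\infty\le C\|d^\varrho g\|_\infty$. The coefficients $a^{(0)}_{ij}$ are only H\"older, so $\mathcal{L}_0 d^{\beta}$ cannot be computed classically: it involves derivatives of $A_0$. To handle this, I would construct barriers via $\li$ itself. I would use the $C^{1,\alpha}$ regularity result (Theorem B.1 of \cite{giacomoni2007sobolev}, quoted above) together with Hopf's lemma \cite{de2015hopf}. Alternatively, I would argue by contradiction and compactness: take $g_k$ with $\|d^\varrho g_k\|_\infty\le1$ and $\|\li(g_k)/d\|_\infty\to\infty$, normalize, and use uniform $C^{1,\alpha}$ bounds to pass to a limit. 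This second route is the ``modified argument for convergence of $\{\li(z_k)\}$'' that the paper alludes to.
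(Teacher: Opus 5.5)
Your proposal is correct, but it takes a different route from the paper.

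\textbf{The paper's route.} The paper never isolates a linear estimate for $\li$. In each of its three steps (continuity of $S$, the Fr\'echet remainder, continuity of $\pa_z S$) it:
\begin{itemize}
\item bounds the relevant right-hand sides by $c\,d^{-\gamma}$;
\item invokes uniform $C^{1,\alpha}(\overline{U})$ bounds (Theorem B.1 of \cite{giacomoni2007sobolev});
\item extracts a convergent subsequence via $C^{1,\alpha}_0(\overline{U})\Subset C^1_0(\overline{U})\hookrightarrow C_d(U)$;
\item identifies the limit by uniqueness.
\end{itemize}

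\textbf{Your route.} You prove once that $\li$ is bounded from $\{g:\ d^{\varrho}g\in L^\infty(U)\}$ into $C_d(U)$. You then reduce everything to pointwise mean-value estimates on the right-hand sides.

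\textbf{What each buys.} Your approach gives quantitative conclusions: a Lipschitz bound for $S$, a remainder $O(\|h\|^2_{C_d(U)})$, and a locally Lipschitz derivative in operator norm. Compactness alone yields only qualitative $o(1)$ convergence. So in the paper's route the genuine $o(\|\varphi\|_{C_d(U)})$ remainder has to come from applying the compactness argument to the remainder divided by $\|\varphi\|_{C_d(U)}$. Likewise, the $O(\|z_k-z\|_{C_d(U)})$ bound asserted in the continuity step is, by compactness, really only $o(1)$; that is enough for continuity. Your lemma also makes explicit the bound $|\li(g)|\le C\,d$, which Theorem B.1 of \cite{giacomoni2007sobolev} takes as a hypothesis. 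The paper's route, in turn, needs no barrier and reuses compactness machinery that is needed anyway to show that $\varphi\mapsto\li(\si z_0^{\si-1}\varphi)$ is compact.

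\textbf{Your key lemma.} Your contradiction/compactness variant presupposes $\|\li(g)/d\|_\infty<\infty$ before normalizing, so it is not an independent proof. The barrier route does work once you replace $d$ by the torsion function $e:=\li(1)$. This function is $C^{1,\alpha}(\overline{U})$, comparable to $d$, and has $|\na e|\ge c>0$ near $\pa U$ by Hopf's lemma. With $\beta=2-\varrho\in(1,2)$, the weak chain rule gives
\[
\mathcal{L}_0\bigl(Ke-e^{\beta}\bigr)=K-\beta e^{\beta-1}+\beta(\beta-1)\,e^{-\varrho}\,|\na e|^2_{A_0}\ \ge\ c'\,d^{-\varrho}\quad\text{in } U
\]
for $K$ large. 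Moreover $0\le Ke-e^{\beta}\le CKd$. Comparison in $H^1_0(U)$ (the right-hand side lies in $H^{-1}$ by Hardy's inequality) then yields $|\li(g)|\le C\,\|d^{\varrho}g\|_\infty\, d$, which is your lemma.
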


\begin{proof}
    \textit{\textbf{Continuity of $S$.}} Let $z_k, z \in \ball$, $|\tau_k| < \mu $ such that $\| z_k -z \|_{C_d (U)} + |\tau_k | \rar 0 $ as $k\rar\infty$. Then
    \begin{align}\label{continuity 1}
        &\left|S(\theta + \tau_k, z_k) - S(\theta, z) \right|\nonumber\\
        = &\left|(z_k -z) - \li \left(z_k^{\si} - z^{\si} \right) + \li \left( \frac{\theta+ \tau_k}{z_k^{\gamma}} - \frac{\theta}{z^{\gamma}} \right)\right|\nonumber\\
        \leq &\|z_k -z\|_{C_d (U)} d(x) + \left|\li \left(z_k^{\si} - \frac{\theta}{z_k^{\gamma}} \right) - \li \left(z^{\si} - \frac{\theta}{z^{\gamma}} \right) \right| + \left|\li \left(\frac{\tau_k}{z_k^{\gamma}} \right)\right|.
    \end{align}
    Let $\tilde{z}_k := z_k^{\si} - \frac{\theta}{z_k^{\gamma}}$, $k\in \sn$ and $\tilde{z}:= z^{\si} - \frac{\theta}{z^{\gamma}}$. Then there exists $c_0>0$ such that $\left|\tilde{z}_k (x) \right| \leq \frac{c_0}{d(x)^{\gamma}}$ for all $x\in U$ and for all $k\in\sn$. From the standard regularity theory of singular problems, the sequence $\left\{ \li (\tilde{z}_k)\right\}$ is bounded in $C^{1, \alpha} (\overline{U}) $, for some $\alpha\in (0,1)$. Due to the embedding $C^{1, \alpha}_0 (\overline{U}) \Subset C^1_0 (\overline{U}) \hookrightarrow C_d (U)$, upto a subsequence, $\left\{\li (\tilde{z}_k)\right\}$ converges to $\li (\tilde{z})$ in $C^1_0 (\overline{U})$ as $k\rar\infty$. In fact, thanks to the uniqueness of $\li (\tilde{z})$, every subsequence of $\left\{\li (\tilde{z}_k ) \right\}$ has a further subsequence that converges to $\li (\tilde{z})$ in $C_d (U)$ as $k\rar\infty$. Thus, we obtain
    \begin{equation}\label{continuity 2}
        \left|\li \left(z_k^{\si} - \frac{\theta}{z_k^{\gamma}} \right) - \li \left(z^{\si} - \frac{\theta}{z^{\gamma}} \right) \right| \leq O\left(\|z_k - z\|_{C_d (U)}\right) d(x).
    \end{equation}
    Further, since the sequence $\left\{\li \left(\frac{1}{z_k^{\gamma}} \right)\right\}$ is bounded in $C_d (U)$, we have
    \begin{equation}\label{continuity 3}
        \left|\li \left(\frac{\tau_k}{z_k^{\gamma}} \right)\right| \leq O\left(|\tau_k|\right) d(x).
    \end{equation}
    Combining \eqref{continuity 1}, \eqref{continuity 2} and \eqref{continuity 3}, we have
    \[ \frac{\left|S(\theta + \tau_k, z_k) - S(\theta, z) \right|}{d(x)} \leq O\left(\|z_k - z\|_{C_d (U)} + |\tau_k|\right). \]
    Consequently, the map $S: \{|\theta| < \mu \} \times B_{\epsilon} (z_0) \rar C_d (U)$ is continuous.
    \vspace{5pt}

    \noindent\textit{\textbf{Continuous Fr\'echet differentiability of $S$.}} With a reasoning similar to the continuity part, we can show that for $z\in \ball$ and $\varphi \in C_d (U)$,
    \[\lim_{t\rar 0+} \frac{|S(\theta, z + t \varphi) - S(\theta, z)|}{t} = \varphi - \si \li \left(z^{\si -1} \varphi \right) - \theta \li \left(\gamma z^{-\gamma -1} \varphi \right). \]
    Thus, the map $S(\theta, \cdot)$ is G\^ateaux differentiable at every $z\in \ball$ and the G\^ateaux derivative is given by
    \[\pa_z S(\theta, z)(\varphi):= \varphi - \si \li \left(z^{\si -1} \varphi \right) - \theta \li \left(\gamma z^{-\gamma -1} \varphi \right),\ \varphi \in C_d (U). \]
    Now, we have
    \begin{multline}\label{Frechet 1}
        |S(\theta, z+\varphi) - S(\theta, z) - \pa_z S(\theta, z)(\varphi)|\nonumber\\
        \leq \left|\li \left( (z+\varphi)^{\si} - z^{\si} - \si z^{\si -1} \varphi \right)\right| + \left| \theta \li \left( \frac{1}{(z+\varphi)^{\gamma}} - \frac{1}{z^{\gamma}} + \frac{\gamma \varphi}{z^{\gamma +1}} \right) \right|.
    \end{multline}
     Again, using an argument similar to the previous part, we obtain
     \[ \left\|S(\theta, z+\varphi) - S(\theta, z) - \pa_z S(\theta, z)(\varphi)\right\|_{C_d (U)} \rar 0 \text{ as } \|\varphi\|_{C_d (U)} \rar 0. \]
     Also, it is straightforward to see that $\pa_{\theta} S(\theta, z) $ exists and is continuous, where $\pa_{\theta} S(\theta, z):= \li \left( \frac{1}{z^{\gamma}} \right) $. Thus, $S$ is Fr\'echet differentiable.
     
     Finally, we establish the continuity of $\pa_z S(\theta, z)$. Let $\{z_k\}$ be a sequence in $\ball$ such that $\| z_k - z \|_{C_d (U)} \rar 0$ as $k\rar\infty$. Then
     \begin{equation}
         \left| \left(\pa_{z} S(\theta, z_k) - \pa_z S(\theta, z) \right) (\varphi) \right| \leq \left| \si \li \left( \left(z_k^{\si -1} - z^{\si -1}\right) \varphi \right) \right| + \left| \gamma \theta \li \left( \frac{\varphi}{z_k^{\gamma +1}} - \frac{\varphi}{z^{\gamma+1}} \right) \right|.
     \end{equation}
     Hence, again a similar convergence argument yields $\pa_z S(\theta, z)$ is continuous and this completes the proof.
\end{proof}

Thus, for any fixed $(\theta,z)\in \{|\theta|<\mu\} \times \ball$, the map $\pa_z S(\theta,z): C_d (U) \rar C_d (U)$ is given by
\begin{equation}
    \pa_z S(\theta,z)(\varphi):= \varphi- \li \left(\si z^{\si -1}\varphi + \frac{\theta \gamma \varphi}{z^{\gamma +1}} \right), \text{ for } \varphi\in C_d (U).
\end{equation}
We also have $S(0, z_0) =0$ as $z_0$ solves \eqref{Q zero}. Now, our objective is to employ the implicit function theorem to establish the existence of a positive solution to \eqref{Q theta}. To this end, we study an associated eigenvalue problem. Let us define
\[\Lambda_1: = \inf_{\substack{\varphi\in H^1_0 (U) \\ \varphi \not\equiv 0}} \left\{\frac{\int_{U} |\varphi|^2_{A_0}\ dx - \si \int_{U} z_0^{\si -1} \varphi^2\ dx}{\int_{U} \varphi^2\ dx} \right\}.\]
Clearly, the above infimum is attained for some $\psi \in H^1_0 (U)$ with $\psi \geq 0$ in $U$, $\|\psi\|_{L^2 (U)} =1$ and $\psi$ solves the following eigenvalue problem:
\begin{equation}\label{eigenvalue problem}
    \begin{aligned}
        -div \left(A_0 (x) \na \psi \right) &= \Lambda_1 \psi + \si z_0^{\si -1} \psi \text{ in } U\\
        \psi &= 0 \text{ on } \pa U.
    \end{aligned}
\end{equation}
The standard elliptic regularity theory ensures that $\psi \in C^{1,\alpha} (\overline{U})$, for some $\alpha \in (0,1)$. Moreover, by the Strong Maximum Principle of Pucci-Serrin (Theorem 2.5.1, \cite{pucci2007maximum}), we obtain $\psi >0$ in $U$. In the following, we list further key properties of the eigenvalue $\Lambda_1$, relevant to our subsequent analysis.

\begin{lemma}
    For the eigenvalue problem \eqref{eigenvalue problem}, the following hold:
    \begin{itemize}
        \item[(a)] $\Lambda_1$ is the unique principal eigenvalue,
        \item[(b)] any non-negative eigenfunction corresponding to $\Lambda_1$ belongs to $\intcd$,
        \item[(c)] $\Lambda_1 >0$.
    \end{itemize}
\end{lemma}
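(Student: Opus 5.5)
Since $z_0^{\si-1}\sim d(x)^{\si-1}$ with $\si-1\in(-1,0)$, the weight $V:=\si z_0^{\si-1}$ is singular at $\pa U$. My first step is to make the eigenvalue problem \eqref{eigenvalue problem} well behaved in spite of this. By the Hardy inequality $\int_U \varphi^2 d^{-2}\,dx\le C\int_U|\na\varphi|^2\,dx$ and H\"older's inequality, $\int_U V\varphi^2\,dx\le \eta\int_U|\na\varphi|^2\,dx + C_\eta\int_U\varphi^2\,dx$ for every $\eta>0$, since $d^{\si-1}\le \eta' d^{-2}+C$ near the boundary fails only on a thin layer. Consequently the quadratic form is bounded below and coercive modulo $L^2$, and the embedding $H^1_0(U)\hookrightarrow L^2(U;V\,dx)$ is compact, because $V\in L^{t}$ for some $t>1$ when $N$ is small, and in general by the Hardy estimate on $\{d<\rho\}$ together with compactness inside. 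This justifies that $\Lambda_1$ is finite and attained.

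For (b), I take any nonnegative eigenfunction $\phi\not\equiv0$. It solves $\mathcal{L}_0\phi=(\Lambda_1+V)\phi$, and the right-hand side is bounded by $C d^{\si-1}\|\phi\|_\infty$ once $\phi\in L^\infty$, which follows by Moser iteration using the form bound above. From this, $|\phi|\le Cd$, obtained by comparing with $\li(d^{\si-1})$, and then Theorem B.1 of \cite{giacomoni2007sobolev} (as quoted before Definition \ref{definition_of_S}) gives $\phi\in C^{1,\alpha}(\overline U)$. Because $(\Lambda_1+V)^-$ is bounded, there is $M$ with $\mathcal{L}_0\phi+M\phi\ge0$. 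The strong maximum principle gives $\phi>0$, and the Hopf lemma for H\"older-coefficient divergence operators (Theorem 1.1 of \cite{de2015hopf}, as used for $z_0$) gives $\pa\phi/\pa\nu<0$. Hence $\phi\in\intcd$.

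For (a), uniqueness of the principal eigenvalue: I suppose some $\Lambda$ has an eigenfunction $\phi\ge0$. By (b), both $\phi$ and $\psi$ lie in $\intcd$. The generalized Picone identity (Proposition 3.1 of \cite{pinchover2015criticality}) applied to the pair $\psi,\phi$ gives $0\le\int_U|\na\psi|_{A_0}^2 - \langle A_0\na\phi,\na(\psi^2/\phi)\rangle\,dx = (\Lambda_1-\Lambda)\int_U\psi^2\,dx$; here $\psi^2/\phi\in H^1_0$ because $\psi/\phi$ is bounded. Hence $\Lambda\le\Lambda_1$, and minimality gives $\Lambda\ge\Lambda_1$, so equality holds. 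The equality case in Picone also yields simplicity.

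For (c), I use $z_0$ itself. It satisfies $\mathcal{L}_0 z_0 = z_0^{\si} = V z_0/\si$, which means $z_0$ is a positive eigenfunction of the problem with weight $z_0^{\si-1}$ and eigenvalue $0$. I test \eqref{eigenvalue problem} with $z_0$ and the equation for $z_0$ with $\psi$, and subtract:
\[\Lambda_1\int_U\psi z_0\,dx=\int_U\big(\mathcal{L}_0\psi-\si z_0^{\si-1}\psi\big)z_0\,dx=(1-\si)\int_U z_0^{\si}\psi\,dx>0,\]
using $\int_U(\mathcal{L}_0\psi)z_0\,dx=\int_U\psi\,\mathcal{L}_0z_0\,dx=\int_U z_0^\si\psi\,dx$. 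Since $\psi,z_0>0$, this gives $\Lambda_1>0$. The integrations by parts are legitimate because both functions are $C^{1,\alpha}$ and vanish on $\pa U$, and the singular weight is integrable against $d^2$.

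The main obstacle is the regularity and boundary behaviour in (b): obtaining $|\phi|\le Cd$ from a right-hand side that blows up like $d^{\si-1}$, so that the Hopf lemma and the $C^{1,\alpha}$ theory apply. I would handle it by bootstrapping: $L^\infty$ first, then a barrier built from $\li(d^{\si-1})$, which is $\le Cd$ by the quoted singular regularity.
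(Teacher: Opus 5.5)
Your proposal is correct and is essentially the standard argument the paper has in mind (the paper gives no details and simply defers to \cite{giacomoni2019existence}): Hardy-based attainment of $\Lambda_1$, an $L^\infty$ bound and the comparison $\phi\le Cd$ followed by the strong maximum principle and Hopf lemma for (b), a Picone comparison for (a), and testing against $z_0$ for (c), which gives $\Lambda_1\int_U\psi z_0\,dx=(1-\sigma)\int_U z_0^{\sigma}\psi\,dx>0$. One point to fix, in the step you flagged as the main obstacle: $\mathcal{L}_0^{-1}(d^{\sigma-1})\le Cd$ is a hypothesis of the quoted Theorem B.1 statement rather than a consequence of it, so it needs its own barrier, e.g.\ $\theta-\epsilon\theta^{1+\sigma}$ with $\theta=\mathcal{L}_0^{-1}(1)$, for which $\mathcal{L}_0(\theta-\epsilon\theta^{1+\sigma})=1-\epsilon(1+\sigma)\theta^{\sigma}+\epsilon\sigma(1+\sigma)\theta^{\sigma-1}|\nabla\theta|_{A_0}^2\ge c\,d^{\sigma-1}$ for small $\epsilon>0$ by Hopf's lemma for $\theta$ (and in (a) you could skip Picone altogether, since linearity and symmetry give $(\Lambda-\Lambda_1)\int_U\phi\psi\,dx=0$ directly).
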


The proof of this lemma is standard and closely follows the arguments presented in \cite{giacomoni2019existence} for the fractional Laplacian operator. Now, we are in a position to apply the implicit function theorem, which yields a positive solution to \eqref{Q theta}, as stated in the Proposition \ref{infinite_semipositone_existence}.
\vspace{5pt}

\noindent\textbf{\textit{Proof of Proposition \ref{infinite_semipositone_existence}:}} We first show that the map $\pa_z S(0, z_0): C_d (U) \rar C_d (U)$ is invertible. Owing to its linearity, proving $\pa_z S(0, z_0) (\varphi) =0$ implies $\varphi =0$ in $C_d (U)$ is sufficient to conclude that $\pa_z S(0, z_0)$ is injective. Assume $\pa_z S(0, z_0) (\varphi) =0$, that is, $\varphi$ satisfies
\begin{equation*}
    \begin{aligned}
        -div \left(A_0 (x) \na \varphi \right) &= \si z_0^{\si -1} \varphi \text{ in } U\\
        \varphi &= 0 \text{ on } \pa U.
    \end{aligned}
\end{equation*}
If $\varphi \not\equiv 0$ in $C_d (U)$, then testing the above equation with $\varphi$ and using the positivity of $\Lambda_1$, we obtain
\[0 = \int_{U} A_0 (x) \na \varphi \cdot \na \varphi\ dx - \si \int_{U} z_0^{\si -1} \varphi^2\ dx \geq \Lambda_1 \int_{U} \varphi^2\ dx >0,\]
which is a contradiction. Hence, we must have $\varphi =0$, proving that $\pa_z S(0, z_0): C_d (U) \rar C_d (U)$ is injective. Next, since $z_0 \in \intcd$ and thanks to the global $C^{1,\alpha}$ regularity (Theorem B.1 of \cite{giacomoni2007sobolev}), we note that the linear map $\varphi \mapsto \li \left(\si z_0^{\si -1} \varphi \right)$, $\varphi \in C_d (U)$ is compact. Thus, $\pa_z S(0, z_0)$ is a compact perturbation of the identity operator and, being injective, it is therefore invertible.

By implicit function theorem, there exist $0<\mu' < \mu$, $0<\epsilon' < \epsilon$ and a $C^1$-map $g_1 : \{ |\theta| < \mu'\} \rar B_{\epsilon'} (z_0) $ such that $S(\theta, z) = 0$ in $\{ |\theta| < \mu'\} \times B_{\epsilon'} (z_0)$ coincides with the graph of $g_1$. Now, by definition, $S(\theta, z) = 0$ if and only if $z$ is a solution of \eqref{Q theta}. Therefore, for sufficiently small values of $\theta$, \eqref{Q theta} admits a positive solution $z_{\theta} \in int\ C_d^+ (U)$.\hfill\qed


\section{$L^{\infty}$-regularity}\label{appendix_L_infinity_regularity}

In this section, we discuss the $L^{\infty}$-regularity of weak solutions to the following equation
\begin{equation}\label{L_infinity_equation}
    \begin{aligned}
        -\p u + \qs u &= \frac{1}{u^{\delta}} + f(x,u) \text{ in } \Om\\
        u>0 \text{ in } \Om,\ u &= 0 \text{ in } \bdry
    \end{aligned}
\end{equation}
where $0<\delta<1$ and $f$ satisfies \textit{(f1)}, \textit{(f2)} outlined in Section \ref{section_minimizer}. Due to the singular nature of the nonlinearity near the boundary of the domain, we cannot directly follow the standard approach. To overcome this issue, we first observe a useful technical result, which allows us to reduce the regularity analysis to that of a non-singular problem.

\begin{lemma}\label{regularity_lemma}
   If $u\in\xo (\Om)$ is a weak solution to equation \eqref{L_infinity_equation}, then $u$ satisfies
    \[-\p \left((u-1)^+ \right) + \qs \left((u-1)^+ \right) \leq 1 + f\left(x,(u-1)^+ \right) \text{ in } \Om,\]
    in the weak sense.
\end{lemma}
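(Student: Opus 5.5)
The plan is to test the weak formulation of \eqref{L_infinity_equation} with a truncated version of a non-negative test function, and then pass to the limit. Write $v:=(u-1)^+$ and $\chi:=\chi_{\{u>1\}}$. For $k\in\sn$ set $\eta_k:=\min\{k(u-1)^+,1\}$, so that $\eta_k\in[0,1]$, $\eta_k=0$ on $\{u\le 1\}$ and $\eta_k\uparrow\chi$ pointwise. Fix $\varphi\in\test$ with $\varphi\geq 0$. Then $\varphi\eta_k\in\xo(\Om)$ has support away from $\{u\le1\}$, so it is an admissible test function: the singular term is harmless there because $u>1$. Testing gives
\[
\int_\Om |\na u|^{p-2}\na u\cdot\na(\varphi\eta_k)+\int_{\sr^{2N}}\frac{\mathcal G u(x,y)\big((\varphi\eta_k)(x)-(\varphi\eta_k)(y)\big)}{|x-y|^{N+qs}}=\int_\Om\Big(\frac1{u^\delta}+f(x,u)\Big)\varphi\eta_k .
\]

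\emph{Local term.} We have $\na(\varphi\eta_k)=\eta_k\na\varphi+k\varphi\na u\,\chi_{\{1<u<1+1/k\}}$. The second contribution equals $k\int\varphi|\na u|^p\chi_{\{1<u<1+1/k\}}\geq0$, so it can be dropped. The first converges, by dominated convergence, to $\int_{\{u>1\}}|\na u|^{p-2}\na u\cdot\na\varphi=\int|\na v|^{p-2}\na v\cdot\na\varphi$, since $\na v=\chi\na u$ a.e.

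\emph{Nonlocal term.} This is the main obstacle, because $\eta_k$ has no uniform Lipschitz bound. Use the symmetric splitting
\[
(\varphi\eta_k)(x)-(\varphi\eta_k)(y)=\tfrac12(\eta_k(x)+\eta_k(y))(\varphi(x)-\varphi(y))+\tfrac12(\varphi(x)+\varphi(y))(\eta_k(x)-\eta_k(y)).
\]
Since $\eta_k$ is a nondecreasing function of $u$, we have $\mathcal Gu(x,y)(\eta_k(x)-\eta_k(y))\ge0$, so the second piece contributes non-negatively. The first piece is dominated by $|\mathcal G u(x,y)||\varphi(x)-\varphi(y)|\,|x-y|^{-N-qs}\in L^1(\sr^{2N})$. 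Hence the full integrand is bounded below by an integrable function and converges pointwise to $\mathcal Gu(x,y)\big((\varphi\chi)(x)-(\varphi\chi)(y)\big)|x-y|^{-N-qs}$. Fatou's lemma then yields
\[
\liminf_{k\to\infty}\ \text{(nonlocal term)}\ \geq\int_{\sr^{2N}}\frac{\mathcal Gu(x,y)\big((\varphi\chi)(x)-(\varphi\chi)(y)\big)}{|x-y|^{N+qs}}.
\]
I then compare this pointwise with $\mathcal Gv(x,y)(\varphi(x)-\varphi(y))$ by cases:
\begin{itemize}
\item If $u(x),u(y)>1$, the two integrands coincide.
\item If $u(x),u(y)\le1$, both vanish.
\item If $u(x)>1\ge u(y)$, then $\mathcal Gv(\varphi(x)-\varphi(y))=(u(x)-1)^{q-1}(\varphi(x)-\varphi(y))\le (u(x)-1)^{q-1}\varphi(x)\le (u(x)-u(y))^{q-1}\varphi(x)$, and the last expression is the $\chi$-integrand. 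Here we use $\varphi\ge0$ and $u(y)\le1$.
\item The case $u(y)>1\ge u(x)$ is symmetric.
\end{itemize}
Hence the limit of the nonlocal term dominates $\langle\qs v,\varphi\rangle$.

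\emph{Right-hand side.} On $\{u>1\}$ we have $u^{-\delta}\le1$. By monotone convergence, the right-hand side tends to $\int_{\{u>1\}}(u^{-\delta}+f(x,u))\varphi\le\int\big(1+f(x,u)\chi\big)\varphi$. It remains to identify $f(x,u)\chi$ with the term $f(x,(u-1)^+)$ in the statement. This is the one point where I am least sure of the intended reading, since $u\ne(u-1)^+$ on the support of $v$. For the subsequent use, which relies only on (f2) and so requires just $f(x,u)\chi\le C_0(1+(v+1)^r)\le C(1+v^r)$, this identification is harmless. I would present the inequality with $f$ evaluated along $u$ on $\operatorname{supp}v$, and remark how it reduces to the stated form. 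Combining the three limits gives the claimed weak inequality for every $\varphi\ge0$.
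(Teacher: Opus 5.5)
Your proof is correct and is essentially the Kato-type truncation argument the paper has in mind; the paper omits the proof, pointing to an adaptation of Proposition~5.1 of \cite{giacomoni2019verysingular} with the nonlocal computations of Theorem~A.1 of \cite{garain2022mixed}. The one detail to tighten is the admissibility of $\varphi\eta_k$: the weak formulation is only postulated for $C_c^\infty(\Om)$ test functions, so you should mollify $\varphi\eta_k$ inside a fixed compact $K\Subset\Om$ and pass to the limit using $\operatorname{ess\,inf}_K u>0$, rather than appealing to $u>1$ where $\varphi\eta_k\neq0$.

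Your caveat on the right-hand side is well founded. The argument yields $1+\chi_{\{u>1\}}f(x,u)\le C\bigl(1+((u-1)^+)^r\bigr)$ via (f2), not literally $1+f(x,(u-1)^+)$, and this same-growth form is exactly what the Moser iteration of Lemma~\ref{l_infinity} requires.
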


The proof of this lemma is based on an adaptation of the approach from Proposition 5.1 of \cite{giacomoni2019verysingular}, suitably modified to account for the present mixed local-nonlocal setting. The computations related to the nonlocal term closely follow those in Theorem A.1 of \cite{garain2022mixed}. For brevity, we omit the proof here.

Thanks to lemma \ref{regularity_lemma}, it is now sufficient to study the regularity of weak solutions to the following non-singular problem:
\begin{equation}\label{l_infinity_nonsingular}
    \begin{aligned}
        -\p u + \qs u &= f(x,u) \text{ in } \Om\\
        u &=0 \text{ in } \bdry.
    \end{aligned}
\end{equation}
The next lemma establishes a uniform a priori $L^{\infty}$-estimate for weak solutions to the above equation \eqref{l_infinity_nonsingular}, which plays a crucial role in the proof of Theorem \ref{Theorem_minimizer} in Section \ref{section_minimizer}.

\begin{lemma}\label{l_infinity}
    Let $p,q >1$ and $u\in \xo (\Om)$ be a positive weak solution of \eqref{l_infinity_nonsingular}. Then $u\in L^{\infty}(\Om)$ and there exists a constant $M_0 = M_0(p, q,s,\Om,C_0)>0$, independent of $u$, such that
    \[ \|u\|_{\infty} \leq M_0 \left(1+\int_{\Om}|u|^{\beta_1 (r^* +1)}\ dx \right)^{\frac{1}{(r^* +1) (\beta_1 -1)}}, \]
    where $\displaystyle \beta_1 = \begin{cases}
        \frac{p^* -1}{p}+1, \text{ if } r^* = p^* -1,\\[2mm]
        \frac{q_s^* -1}{q}+1, \text{ if } r^* = q_s^* -1.
    \end{cases}$
\end{lemma}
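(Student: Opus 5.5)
We run a Moser iteration, testing the equation with powers of a truncation of $u$. The exponent $\beta_1$ in the statement is exactly the first Moser exponent that makes the critical term integrable. We treat the case $r^*=p^*-1$; the case $r^*=q_s^*-1$ is identical, with the fractional Sobolev inequality in $W^{s,q}_0(\Om)$ (exponent $q_s^*$) replacing the classical one.

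\textbf{Test function.} For $\beta\ge 1$ and $T>0$, set $u_T=\min\{u,T\}$ and $\varphi=u\,u_T^{p(\beta-1)}\in\xo(\Om)$. It is admissible because it is bounded by $T^{p(\beta-1)}u$ and Lipschitz in $u$.

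\textbf{Local term.} The standard computation gives
\[
\int_\Om|\na u|^{p-2}\na u\cdot\na\varphi\ge c\beta^{1-p}\int_\Om\bigl|\na (u\,u_T^{\beta-1})\bigr|^p .
\]

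\textbf{Nonlocal term.} The nonlocal contribution is nonnegative. For $q\neq p$ we simply discard it; this is legitimate because $t\mapsto t\,t_T^{p(\beta-1)}$ is nondecreasing, so $\mathcal{G}u(x,y)(\varphi(x)-\varphi(y))\ge0$ pointwise.

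\textbf{Sobolev step.} Combining with (f2) and the Sobolev inequality,
\[
\bigl\|u\,u_T^{\beta-1}\bigr\|_{p^*}^p\le C\beta^{p-1}\int_\Om (1+u^{r^*})\,u\,u_T^{p(\beta-1)}\,dx .
\]

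\textbf{Main obstacle: the critical term.} The difficulty is the term $\int u^{p^*-p}\,(u\,u_T^{\beta-1})^p$, which has exactly the critical homogeneity and so cannot be absorbed by smallness of the exponent. I would handle it with the Brezis–Kato splitting. For $K>0$, split into $\{u\le K\}$ and $\{u>K\}$ and apply H\"older on the latter:
\[
\int_{\{u>K\}}u^{p^*-p}(u\,u_T^{\beta-1})^p\le\Bigl(\int_{\{u>K\}}u^{p^*}\Bigr)^{p/N}\bigl\|u\,u_T^{\beta-1}\bigr\|_{p^*}^p .
\]
For a single $u$ one chooses $K$ so that the first factor is below $1/(2C\beta^{p-1})$ and absorbs. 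This choice of $K$ depends on $u$, which is incompatible with the requirement that $M_0$ be independent of $u$. I would therefore avoid the splitting at the first step and instead control the critical term directly by an a priori higher integrability norm.

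\textbf{First step at $\beta=\beta_1$.} Take $\beta=\beta_1$, so that $p\beta_1=p^*-1+p$. Using H\"older with the quantity $\int u^{\beta_1(r^*+1)}$, and letting $T\to\infty$ (monotone convergence), we get
\[
\|u\|_{\beta_1p^*}\le C\Bigl(1+\int_\Om u^{\beta_1(r^*+1)}\Bigr)^{\kappa}
\]
for some explicit exponent $\kappa$. This is where the right-hand quantity of the lemma enters, and it is the step whose bookkeeping I expect to be most delicate: the exponent $\frac{1}{(r^*+1)(\beta_1-1)}$ should emerge from Young's inequality when balancing $u^{r^*+1}u^{p(\beta_1-1)}$ against the left side.

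\textbf{Iteration.} Once $u\in L^{\beta_1p^*}$, the potential $u^{p^*-p}$ lies in $L^{N/p+\eta}$ for some $\eta>0$, i.e. it is strictly subcritical. Standard Moser iteration with $\beta_{k+1}=\beta_k\,p^*/p$ (interpolating away the now subcritical term) then gives
\[
\|u\|_{\beta_{k+1}p^*}\le (C\beta_k)^{c/\beta_k}\,\max\{1,\|u\|_{\beta_kp^*}\}^{\theta_k},
\]
with $\sum 1/\beta_k<\infty$ and $\prod\theta_k$ bounded. Letting $k\to\infty$ yields $\|u\|_\infty\le M_0\,(1+\|u\|_{\beta_1p^*})^{c}$, and inserting the bound from the first step produces the stated estimate with $M_0=M_0(p,q,s,\Om,C_0)$.
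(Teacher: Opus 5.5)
There is a genuine gap in your ``First step at $\beta=\beta_1$''. Since $r^*+1=p^*$, the quantity $\int_\Om u^{\beta_1(r^*+1)}\,dx$ you feed into H\"older's inequality is exactly $\|u\|_{\beta_1p^*}^{\beta_1p^*}$. Your inequality $\|u\|_{\beta_1p^*}\le C\bigl(1+\int_\Om u^{\beta_1(r^*+1)}\bigr)^{\kappa}$ therefore bounds a quantity by itself and carries no information.

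For a weak solution $u\in\xo(\Om)$ you only know a priori that $u\in L^{p^*}(\Om)$. If $\int_\Om u^{\beta_1p^*}=+\infty$, the displayed estimate of the lemma holds trivially, but the assertion $u\in L^\infty(\Om)$ does not follow. Your iteration step (``once $u\in L^{\beta_1p^*}$\dots'') presupposes exactly the finiteness that is never proved. As written, the proposal is only an a priori bound for solutions already known to lie in $L^{\beta_1p^*}(\Om)$.

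The missing ingredient is the Brezis--Kato splitting you discarded. You discarded it because you misread which uniformity is required. The lemma does not bound $\|u\|_\infty$ in terms of $\|u\|_{\xo(\Om)}$ or $\|u\|_{p^*}$; its right-hand side is $\int_\Om u^{\beta_1p^*}$ itself. The paper accordingly works in two stages.

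\emph{Qualitative stage.} Test with $\beta=\beta_1$ and absorb the critical term on $\{u>R\}$, with $R$ chosen depending on $u$ so that $\int_{\{u>R\}}u^{p^*}$ is small. Letting $T\to\infty$ gives only the qualitative fact $u\in L^{\beta_1p^*}(\Om)$. The $u$-dependence of $R$ is harmless because this bound is never used quantitatively.

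\emph{Quantitative stage.} For $\beta>\beta_1$, the critical term is bounded crudely by $\beta^p\int_\Om u^{(\beta-1)p+p^*}$. The recursion $(\beta_{m+1}-1)p=(\beta_m-1)p^*$ makes this exactly $\int_\Om u^{\beta_mp^*}$, the norm already controlled at the previous step. All constants are then independent of $u$, and the exponent $\frac{1}{p^*(\beta_1-1)}$ comes out of the telescoping product. It does not arise from a Young-inequality balance at the first step, as you anticipated.

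Your alternative iteration treats $u^{p^*-p}\in L^{N/p+\eta}$ as a potential with $\beta_{k+1}=\beta_kp^*/p$. It is workable once $u\in L^{\beta_1p^*}$ is known. However, the constants then depend on $\|u\|_{\beta_1p^*}$ through the interpolation, and you would still need to check that the resulting power matches the one in the statement.

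Separately, the case $p\ge N$, where $p^*$ is not a finite exponent, needs its own short argument via embeddings, as the paper gives.
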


\begin{proof}
    Firstly, we observe that when $p\geq N$, the standard embedding theorems ensure that $u\in L^t(\Om)$ for every $1\leq t < \infty$. Consequently, the right-hand side of equation \eqref{l_infinity_nonsingular}, namely $f(x,u)$, belongs to $L^{t_1}(\Om)$ for some $t_1 > N$. The desired result then follows directly from the global regularity theorem established in \cite{antonini2023global}. Therefore, in what follows, we restrict our attention to the non-trivial case $p<N$.
    
    Furthermore, we split the proof into two cases according to the value of $r^*$, namely $r^* = p^* -1$ and $r^* = q_s^* -1$. We first consider the case $r^* = p^* -1$ and establish the result through a two-step analysis.
    \vspace{5pt}

    \noindent\textit{\textbf{Claim:}} If $r^*=p^*-1,$ then $u\in L^{\beta_1 p^*}(\Om)$, where $\beta_1 = \frac{p^* -1}{p}+1$.
    \vspace{5pt}
    
    For $\beta>1$ and $T>1$, define a function $\phi: \sr\rar\sr$ by
    \[\phi(t) := \begin{cases}
        0, \text{ if } t < 0,\\[1mm]
        t^{(\ba -1)p+1}, \text{ if } 0 \leq t \leq T,\\[1mm]
        \left( (\ba-1)p+1 \right) T^{(\ba -1)p} \left(t-T \right) +T^{(\ba-1)p+1}, \text{ if } t > T.
    \end{cases}\]
    Since $\phi$ is Lipschitz continuous with $\phi(0)=0$, it follows that if $u\in\xo (\Om)$, then $\phi(u)\in\xo (\Om)$ as well. Taking $\phi(u)$ as a test function in \eqref{l_infinity_nonsingular}, we obtain
    \begin{equation}\label{I1 and I2 equation}
        J_1 + J_2 = \int_{\Om} f(x,u) \phi(u)\ dx,
    \end{equation}
    where $\displaystyle J_1 = \int_{\Om} |\na u|^{p-2} \na u \cdot \na \left(\phi(u)\right) dx$ and \[\hspace{-45pt}J_2 = \int_{\sr^N \times \sr^N} \frac{|u(x)-u(y)|^{q-2} \left( u(x)-u(y) \right) \left((\phi \circ u)(x) - (\phi \circ u)(y) \right)}{|x-y|^{N+qs}}\ dx\ dy.\]

    Let us define $\displaystyle \phi_1 (t):= \int_0^t \phi' (\tau)^{1/p}\ d\tau$ and $\displaystyle \phi_2 (t):= \int_0^t \phi' (\tau)^{1/q}\ d\tau,\ t\in\sr$. Both $\phi_1$ and $\phi_2$ are then Lipschitz continuous functions with $\phi_1(0)=\phi_2(0)=0$ and hence $\phi_1(u),\ \phi_2 (u) \in \xo(\Om)$ whenever $u\in\xo(\Om)$. Moreover, 
    \begin{equation}\label{I1 estimate}
        J_1 = \int_{\Om} |\na u|^p \phi'(u)\ dx = \int_{\Om} |\na u|^p \left(\phi_1'(u)\right)^p\ dx=\|\phi_1 (u)\|^p_{1,p}
    \end{equation}
    and by lemma A.2 of \cite{brasco2016second}, we have
    \begin{equation}\label{I2 estimate}
        J_2 \geq \int_{\sr^N \times \sr^N} \frac{\left|\phi_2\left(u(x)\right) - \phi_2\left(u(y)\right)\right|^q}{|x-y|^{N+qs}}\ dx\ dy \geq 0.
    \end{equation}
    Combining \eqref{I1 and I2 equation}, \eqref{I1 estimate}, \eqref{I2 estimate} and applying Sobolev inequality, we obtain
    \begin{equation*}
        \|\phi_1 (u)\|^p_{p^*} \leq c_1 \|\phi_1 (u)\|^p_{1,p}\leq c_1 \int_{\Om} f(x,u)\phi(u)\ dx.
    \end{equation*}
    Next, we observe that $\phi_1 (t) = \frac{1}{\beta} \left((\beta -1)p +1 \right)^{1/p} \psi (t)$, where $\psi : \sr \rar \sr$ is the function defined by
    \[\psi(t) := \begin{cases}
        0, \text{ if } t < 0,\\[1mm]
        t^{\ba}, \text{ if } 0 \leq t \leq T,\\[1mm]
        \ba T^{\ba -1} (t-T) +T^{\ba}, \text{ if } t > T.
    \end{cases}\]
    Consequently, using the growth condition \textit{(f2)}, we have
    \[\|\psi (u)\|^p_{p^*} \leq c_2 \beta^p \int_{\Om} \left(1+ u^{p^* -1}\right)\phi(u)\ dx,\]
    where the constant $c_2 = c_2 \left(p, q,s,\Om,C_0 \right) >0$ is independent of $u$. Furthermore, noting that $\phi(t) \leq \left((\beta -1)p +1 \right) t^{(\beta -1)p +1}$ and $t^{p-1} \phi(t) \leq \left((\beta -1)p +1 \right) \left(\psi (t)\right)^p$, we deduce the following key estimate:
    \begin{equation}\label{iteration_starting_point}
        \|\psi (u)\|^p_{p^*} \leq c_2 \beta^p \left((\beta -1)p +1 \right) \left( \int_{\Om} u^{(\beta -1)p +1}\ dx + \int_{\Om} u^{p^* -p}\left(\psi (u)\right)^p dx \right).
    \end{equation}
    Now, choosing $\ba =\ba_1 := \frac{p^* -1}{p}+1$ gives
    \begin{equation}\label{J1 and J2}
        \|\psi (u)\|^p_{p^*} \leq c_2 \beta_1^p p^* \left( \int_{\Om} u^{p^*} dx + \int_{\Om} u^{p^* -p}\left(\psi (u)\right)^p dx \right).
    \end{equation}
    To estimate the second integral on the RHS, we introduce $R>0$, which is to be chosen later. By H\"older's inequality and observing that $\psi(t) \leq \beta t^{\beta}$, we obtain
    \begin{align}\label{J2}
        \int_{\Om} u^{p^* -p}\left(\psi (u)\right)^p dx\nonumber \leq &\int_{\left\{u\leq R\right\}} \frac{\left(\psi(u)\right)^p}{u^{p-1}} R^{p^* -1}\ dx + \left( \int_{\Om} \left( \psi (u)\right)^{p^*} dx \right)^{p/{p^*}} \left( \int_{\left\{u > R\right\}} u^{p^*} dx \right)^{\frac{p^* -p}{p^*}}\nonumber\\
        \leq &R^{p^* -1} \beta_1^p \int_{\Om} u^{p^*} dx + \left( \int_{\Om} \left( \psi (u)\right)^{p^*} dx \right)^{p/{p^*}} \left( \int_{\left\{u > R\right\}} u^{p^*} dx \right)^{\frac{p^* -p}{p^*}}.
    \end{align}
    Now, thanks to the dominated convergence theorem, we choose $R>0$ sufficiently large so that
    \[\left( \int_{\left\{u > R\right\}} u^{p^*} dx \right)^{\frac{p^* -p}{p^*}} \leq \frac{1}{2c_2 \beta_1^p p^*}.\]
    Substituting the estimate \eqref{J2} in \eqref{J1 and J2}, and absorbing the last term of \eqref{J2} into the LHS of \eqref{J1 and J2}, we arrive at
    \begin{equation*}
        \left(\int_{\Om} \left( \psi(u)\right)^{p^*} dx \right)^{p/p^*} \leq 2c_2 \beta_1^p p^* \left(1 + R^{p^* -1} \beta_1^p \right) \int_{\Om} u^{p^*} dx,
    \end{equation*}
    where the RHS is finite and independent of $T$. Therefore, letting $T\rar\infty$, we conclude that
    \begin{equation}
        \left( \int_{\Om} u^{\beta_1 p^*} dx \right)^{p/p^*} < \infty,
    \end{equation}
    that is, $u\in L^{\beta_1 p^*} (\Om)$ and this establishes our claim.

    Next, in the second step, we follow the Moser iteration approach to improve the integrability of $u$ further. Recalling the estimate in \eqref{iteration_starting_point} and using the properties of the function $\psi$, we have
    \begin{equation}
        \|\psi (u)\|^p_{p^*} \leq c_2 \beta^{2p} \left((\beta -1)p +1 \right) \left( \int_{\Om} u^{(\beta -1)p +1}\ dx + \int_{\Om} u^{(\beta -1)p + p^*} dx \right),
    \end{equation}
    which holds for any $\beta>1$ for which the RHS is finite. Moreover, the RHS being independent of $T$, we may pass to the limit as $T\rar\infty$ and obtain
    \begin{equation}\label{iteration_second_point}
        \left( \int_{\Om} u^{\beta p^*} dx \right)^{p/p^*} \leq c_2 \beta^{2p} \left((\beta -1)p +1 \right) \left( \int_{\Om} u^{(\beta -1)p +1}\ dx + \int_{\Om} u^{(\beta -1)p + p^*} dx \right).
    \end{equation}
    Now, to handle the first integral on the RHS, we apply Young's inequality with conjugate exponents $\frac{b}{a}$ and $\frac{b}{b-a}$, where $a= (\beta -1)p +1$ and $b= (\beta -1)p + p^*$. This yields
    \begin{equation}
        \int_{\Om} u^{(\beta -1)p +1}\ dx \leq \frac{a}{b} \int_{\Om} u^{(\beta -1)p + p^*} dx + \frac{b-a}{b}\ |\Om|.
    \end{equation}
    Substituting this bound into \eqref{iteration_second_point}, we thus obtain
    \begin{equation}
        \left( \int_{\Om} u^{\beta p^*} dx \right)^{p/p^*} \leq c_3 \beta^{2p} \left((\beta -1)p +1 \right) \left( 1 + \int_{\Om} u^{(\beta -1)p + p^*} dx \right),
    \end{equation}
    where $c_3 = c_3 \left(p, q,s,\Om,C_0 \right) >0$ is also independent of $u$. Applying the elementary inequality $(a+b)^p \leq 2^{p-1} (a^p + b^p)$, for any $a,b >0$, we further deduce
    \begin{equation}\label{iteration}
        \left( 1+ \int_{\Om} u^{\beta p^*} dx \right)^{\frac{1}{p^* (\beta -1)}} \leq c_4^{\frac{1}{\beta -1}} \beta^{\frac{2p+1}{p(\beta -1)}} \left( 1+ \int_{\Om} u^{(\beta -1)p + p^*} dx \right)^{\frac{1}{p(\beta -1)}},
    \end{equation}
    for some constant $c_4 = c_4 \left(p, q,s,\Om,C_0 \right) >0$, independent of $u$.
    
    Now, we iterate this relation over a sequence $\{\beta_m\}_{m\geq 1}$ defined recursively by
    \[\ba_1 = \frac{p^* -1}{p}+1,\ (\beta_{m+1} -1)p = (\beta_m -1) p^*,\ m\geq 1.\]
    This yields
    \begin{equation}
        \left( 1+ \int_{\Om} u^{\beta_{m+1} p^*} dx \right)^{\frac{1}{p^* (\beta_{m+1} -1)}} \leq c_4^{\sum_{i=2}^{m+1} \frac{1}{\beta_i -1}} \left( \prod_{i=2}^{m+1} \beta_i^{\frac{1}{\beta_i -1}} \right)^{\frac{2p+1}{p}} \left( 1+ \int_{\Om} u^{\beta_1 p^*} dx \right)^{\frac{1}{p^* (\beta_1 -1)}}.
    \end{equation}
    Noting that $\beta_{i+1} -1 = \left(\frac{p^*}{p}\right)^i (\beta_1 -1),\ i\in \sn$, and $\frac{p}{p^*}<1$, it follows that both $\sum_{i=2}^{\infty} \frac{1}{\beta_i -1}$ and $\prod_{i=2}^{\infty} \beta_i^{\frac{1}{\beta_i -1}}$ are convergent. Thus, there exists a positive constant $M_0 = M_0 \left(p, q,s,\Om,C_0 \right)$ such that for every $m\in\sn$,
    \[\left( 1+ \int_{\Om} u^{\beta_{m+1} p^*} dx \right)^{\frac{1}{p^* \left(\beta_{m+1} -1 \right)}} \leq M_0 \left( 1+ \int_{\Om} u^{\beta_1 p^*} dx \right)^{\frac{1}{p^* \left(\beta_1 -1 \right)}}.\]
    This implies that $u\in L^p (\Om)$ for every $1\leq p < \infty$. Finally, applying a limiting argument, we infer that $u\in L^{\infty}(\Om)$ with the uniform bound
    \[ \|u\|_{\infty} \leq M_0 \left( 1+ \int_{\Om} u^{\beta_1 p^*} dx \right)^{\frac{1}{p^* (\beta_1 -1)}}.\]
    This concludes the proof in the case $r^* = p^* -1$.

    Now, we turn to the case $r^* = q_s^* -1$. In this setting, we define the function $\phi$ as follows:
    \[\phi(t) := \begin{cases}
        0, \text{ if } t < 0,\\[1mm]
        t^{(\ba -1)q+1}, \text{ if } 0 \leq t \leq T,\\[1mm]
        \left( (\ba-1)q+1 \right) T^{(\ba -1)q} (t-T) +T^{(\ba-1)q+1}, \text{ if } t > T
    \end{cases}\]
    and retain the definitions of $\phi_1$ and $\phi_2$ as before. Proceeding analogously to the previous case and interchanging the roles of $p$ with $q$ and $p^*$ with $q_s^*$, we obtain the estimates
    \[J_1 \geq 0 \text{ and } J_2 \geq [\phi_2 (u)]_{s,q}^q.\]
    Moreover, with the same definition of $\psi$, here we get $\phi_2 (t) = \frac{1}{\beta} \left((\beta -1)q +1 \right)^{1/q} \psi (t)$, which ultimately yields $u\in L^{\beta_1 q_s^*}(\Om)$, where $\beta_1 = \frac{q_s^* -1}{q}+1$. Finally, similar to the previous case, the desired $L^{\infty}$-estimate of $u$, given by
    \[ \|u\|_{\infty} \leq M_0 \left( 1+ \int_{\Om} u^{\beta_1 q_s^*}\ dx \right)^{\frac{1}{q_s^* (\beta_1 -1)}},\]
    where $M_0 = M_0(p, q,s,\Om,C_0)$, follows once again by employing the Moser iteration method and thereby completing the proof.
\end{proof}


\bibliographystyle{amsplain}
\bibliography{ref_arxiv}	

\end{document}